\documentclass[reqno]{amsart}
\usepackage{amsmath,amssymb,amsthm}
\usepackage{thmtools}
\usepackage{mathptmx}      % use Times fonts if available on your TeX system
\usepackage{mathrsfs}
\usepackage[scr=boondox]{mathalpha}
\usepackage{latexsym}
\usepackage{bm}
\usepackage[usenames,dvipsnames]{xcolor}
\usepackage{graphicx,pifont}
\usepackage{multicol,multirow}
\usepackage{ulem,cancel}
\usepackage{tikz}
\usetikzlibrary[patterns]
\usepackage{comment}

\usepackage{hyperref}
\usepackage{cleveref}
\crefname{equation}{}{}
\crefname{enumi}{}{}
\crefname{figure}{Figure}{Figure}
\crefname{subsection}{Subsection}{Subsections}
\crefname{lemma}{Lemma}{Lemma}
\crefname{theorem}{Theorem}{Theorem}
\crefname{proposition}{Proposition}{Proposition}
\crefname{section}{Section}{Section}
\crefname{appendix}{Appendix}{Appendix}
\crefname{definition}{Definition}{Definition}
\crefname{corollary}{Corollary}{Corollary}
\crefname{table}{Table}{Table}
\crefname{remark}{Remark}{Remark}

\usepackage{array}
\usepackage{caption}
\usepackage{subcaption}
\usepackage{txfonts}
\usepackage{mathtools}
\usepackage[shortlabels]{enumitem}
\usepackage{float}
\usepackage{booktabs}
\usepackage{dutchcal}
\usepackage[T1]{fontenc}
\usepackage{bigfoot}
\usepackage[numbered,framed]{matlab-prettifier}

\newtheorem{theorem}{Theorem}[section]
\newtheorem{lemma}[theorem]{Lemma}
\newtheorem{corollary}[theorem]{Corollary}

\theoremstyle{definition}
\newtheorem{definition}[theorem]{Definition}

\theoremstyle{remark}
\newtheorem{remark}[theorem]{Remark}
\numberwithin{equation}{section}
\numberwithin{figure}{section}

\newcommand{\bs}[1]{\boldsymbol{#1}}

\newcommand{\bb}[1]{{\mbox{\sffamily \bfseries #1}}}
\newcommand{\oname}[1]{\textrm{#1}}
\newcommand{\abs}[1]{\left|#1\right|}
\newcommand{\nrm}[1]{\left|\left|#1\right|\right|}
\newcommand{\eqdef}{\stackrel{\mathrm{def}}{=\joinrel=}}

\newcommand{\phf}[1]{{{#1}+1/2}}

\newcommand{\mhf}[1]{{{#1}-1/2}}
\newcommand{\powth}{{\textrm{\scriptsize th}}}

\newcommand{\Pe}{{\textrm{Pe}}}
\newcommand{\hf}{{\frac{1}{2}}}

\lstMakeShortInline"

\begin{document}

\title[HV methods for advection-diffusion equations]{On a class of supraconvergent Hermite-type methods for advection-diffusion equations}
\author[X.~Zeng]{Xianyi Zeng}
\address{Department of Mathematics, Lehigh University, Bethlehem PA 18015, United States. \\
         Tel.: +1-610-758-3745}
%\address{Department of Mathematical Sciences, University of Texas at El Paso, El Paso TX 79902, United States.\\
%         Tel.: +1-915-747-6759}
%\email[X.~Zeng]{xzeng@utep.edu}
\email[X.~Zeng]{xyzeng@lehigh.edu}
\date{\today}

\subjclass[2010]{65M12 \and 35L65}

\keywords{
    One-dimensional linear advection-diffusion equation;
    Hybrid-variable method;
    Hermite-type discretization;
    Supraconvergence;
    Fourier analysis;
    Linear stability.
}

\begin{abstract}

We develop and analyze a class of Hermite-type methods for linear advection-diffusioon equations on one-dimensional periodic domains.
The methods evolve both nodal values and cell averages and are therefore referred to as hybrid-variable (HV) methods.
Using Hermite interpolation theory, we construct the HV methods to arbitrary order of accuracy, and prove that the method is supraconvergent in the sense that the spatial order of accuracy of the method is larger than the local truncation error. %in the HV approximations to spatial derivatives.
In the second part of the paper, we prove using the theory of positive trigonometric series that all central HV methods for linear diffusion equations and linear advection-diffusion equations are stable at the semi-discretized level.
Both the supraconvergence property and the stability of central HV schemes are verified by extensive numerical examples.

%We develop and analyze a class of compact Hermite-type methods for linear advection-diffusion equations on one-dimensional periodic domains. The methods evolve both nodal values and cell averages and are therefore referred to as hybrid-variable (HV) methods. Using Hermite interpolation, we construct arbitrary-order discrete differential operators for the first- and second-derivative terms and derive explicit formulas for their coefficients. A Fourier-mode analysis reveals that the coupled semidiscretization contains a physical mode approximating the exact solution and a rapidly decaying auxiliary mode. If the three component operators have formal orders \(P_1\), \(P_2\), and \(P_3\), respectively, the spatial order of the resulting advection-diffusion discretization is
%
%$$
%\min\{P_1+2,\;P_2,\;P_3+2\},
%$$
%
%and reduces to \(\min\{P_2,P_3+2\}\) for the diffusion equation. Thus, two of the component operators contribute two orders beyond their local truncation accuracy, yielding a supraconvergence phenomenon. We further prove that the centered HV semidiscretizations are stable for the diffusion equation and for the full advection-diffusion equation. The stability analysis combines Fourier-symbol arguments with positivity properties of trigonometric sums and special Vietoris sequences. Numerical convergence studies and eigenvalue-trajectory calculations confirm the predicted orders of accuracy and stability properties for representative HV schemes.

\end{abstract}

\maketitle

\section{Introduction}
\label{sec:intro}
Advection-diffusion equations model both transport and diffusion processes that are common in both the natural world and industrial applications. 
Indeed, many numerical methods for the Navier-Stokes equations originate in novel discretizations of linear advection-diffusion equations; yet numerical solutions to these model equations remain challenging because the relative importance of the advection and diffusion mechanisms can vary substantially with the physical parameters and the mesh size.
Particularly, diffusion-dominated problems require faithful representation of the dissipative behavior and usually impose severe restrictions on the time step size.
General discussions of the analytical and numerical analysis of time-dependent advection-diffusion equations can be found, for example, in~\cite{WHundsdorfer:2003a}.

This work focuses on the preliminary analysis of a class of high-order Hermite-type discretization methods, with the overaching goal of constructing highly efficient and numerically stable methods for problems of multiple scales.
Particularly, high-order methods for advection-diffusion problems are attractive because they can resolve smooth solutions of multiple scales with substantially fewer degrees of freedom than low-order schemes.
Among many attempts, Hermite methods enrich the discrete data by tracking derivatives at grid points, in addition to approximating nodal solutions; thus they tend to achieve higher order of accuracy on a given stencil of mesh cells and facilitate parallel computations.
Efforts of this kind include the Hermite methods using staggered grids~\cite{JGoodrich:2006a}, their adaptive-order variants reported in~\cite{RChen:2012a}, and the Hermite WENO schemes~\cite{JQiu:2003a}.
These methods demonstrate that augmenting the nodal solutions with additional nodal derivatives can yield highly accurate discretizations while maintaining a small stencil.

In recent years, methods that evolve in addition cell averages gain increasing attention as they synergize extremely well with conservation laws.
One of the earliest example is probably van Leer's Scheme V~\cite{BvanLeer:1977b}, which combines cell averages with nodal solutions at cell faces to construct piecewise-quadratic representations.
Later methods in this category include the multi-moment constrained finite volume method~\cite{SIi:2009a} and the active flux framework~\cite{PRoe:2017a,RAbgrall:2023a}.

The hybrid-variable (HV) framework considered in this paper is a direct generalization of van Leer's Scheme V to arbitrary order of accuracy.
At the semi-discretized level, as assumed in this work, it evolves both nodal values and cell averages simultaneous in time; and the spatial discretization is constructed by approximating spatial derivatives using linear combinations of these two types of variables.
In earlier work~\cite{XZeng:2019a}, the general HV framework was developed for linear advection equations, and it was proved that the HV methods are supraconvergent, meaning the order of the method is higher than that of the local truncation error.
The supraconvergence property makes the HV schemes more attractive than other Hermite-type methods when efficiency is concerned.
Later developments of the HV discretization theory includes establishing a stability barrier for advection equations~\cite{XZeng:2026a} and its application in the construction of an explicit fourth-order HV scheme on the smallest stencil for Euler flows~\cite{XZeng:2024a}.
The full classification of stable HV discretizations of linear advection equations, however, remain an open problem.

Inspired by the supraconvergence property and its potential to construct more efficient schemes than those in the literature, we make an early attempt to extend the HV methods to the model linear advection-diffusion equations $w_t+cw_x-\nu w_{xx}=0, c, \nu > 0$ on a one-dimensional periodic domain and study its accuracy and stability properties.
In the HV framework, the cell-average equation is obtained by integrating the governing equation over each cell, where the advective flux at nodes are directly evaluated using nodal solutions and the diffusive flux is computed by approximating $\nu w_x$ by a central discrete operator dentoed $\nu[\mathcal{D}^c_xw]$.
The nodal equation instead is obtained by approximating $cw_x$ and $\nu w_{xx}$ with discrete operators $c[\mathcal{D}_x]$ and $\nu[\mathcal{D}_{xx}^c]$, respectively.
Here $[\mathcal{D}_x]$, $[\mathcal{D}^c_x]$, and $[\mathcal{D}^c_{xx}]$ are constructed as linear combinations of nearby nodal and cell-averaged solutions on a continuous stencil, and they are called HV discrete differential operators (HV-DDO).
The HV-DDOs can be built to approximate spatial derivatives to arbitrary order of accuracy (see~\cref{sec:prelim_hv}), and we will prove in this work that if the formal orders of $[\mathcal{D}_x]$, $[\mathcal{D}_x^c]$, and $[\mathcal{D}_{xx}^c]$ are $P_1$, $P_2$, and $P_3$, respectively, the spatial order of the resulting HV scheme is $\min(P_1+2,P_2,P_3+2)$ (\cref{thm:acry}).
This is our first main result of this paper and it is a remarkable extension of the supraconvergence result for linear advection equations, where a $P_1^\powth$-order $[\mathcal{D}_x]$ gives rises to a method of order $P_1+1$.

The second main result in this work concerns the stability of the HV methods using Fourier analysis.
As the stability property associated with general $[\mathcal{D}_x]$ is still not fully understood, we focus on fully central HV schemes in this part of the paper.
Particularly, we carefully examine the Fourier symbols of central HV-DDOs and prove that they are all linearly stable at the semi-discretized level (\cref{thm:stab_ade}).

The rest of this article is organized as follows.
In~\cref{sec:prelim} we briefly review the linear advection-diffusion equations and the hybrid-variable discretization framework; particularly, we derive the HV discrete differential operators that approximate first and second derivatives to arbitrary order of accuracy using the Hermite interpolation theory.
The supraconvergence of the semi-discretized HV methods is proved in~\cref{sec:acry}.
Next, in~\cref{sec:stab} we prove that any central HV method is stable using extensively Fourier-symbol stability analysis and the theory of positive trigonometric polynomials; this section outlines the main steps and strategies used in the stability proofs, whereas technical details are offered in appendices.
Numerical verifications of the theoretical supraconvergence and stability are carried out in~\cref{sec:num} using a large collection of sample HV methods.
Finally, \cref{sec:concl} concludes this paper.

\begin{comment}
  \begin{displaymath}
    w_t + cw_x - \nu w_{xx} = 0\;.
  \end{displaymath}
\end{comment}

\section{Preliminaries}
\label{sec:prelim}
We consider the Cauchy problem of the linear advection-diffusion equation:
\begin{equation}\label{eq:prelim_eqn}
  w_t + cw_x - \nu w_{xx} = 0\;,
\end{equation}
on a closed interval $x\in\Omega = [0,\;L]$ and $t\in[0,\;T]$ with periodic boundary conditions $w(0,t) = w(L,t)$ and $w_x(0,t) = w_x(L,t)$; here $c\ne0$ is the constant advection velocity and $\nu>0$ is the constant diffusivity.
%In this section, $\nu$ is a constant that is independent of the discretization method. %in the second half of this work we'll extend the results to artificial viscosities in the context of solving advection equations.
%For simplicity, we assume the periodic boundary conditions $w(0,t) = w(L,t)$ and $w_x(0,t)=w_x(L,t)$. %other boundary conditions will be addressed separately later.

The numerical method for solving~\cref{eq:prelim_eqn} utilizes a uniform grid with the cells $\mathcal{C}_j^{j+1}=[x_j,\;x_{j+1}]$, $j=0,\cdots,N-1$, where $N$ is the number of cells; the grid nodes (or cell boundaries) are denoted $x_j=jh$, $h=L/N$.
In the hybrid-variable (HV) discretization framework, we consider approximations to both nodal solutions and cell-averaged solutions at the semi-discretized level:
\begin{equation}\label{eq:prelim_semi}
  w_j(t)\approx w(x_j,\;t)\;,\quad
  \overline{w}_{j+1/2}(t)\approx \frac{1}{h}\int_{\mathcal{C}_j^{j+1}}w(x,\;t)dx\;.
\end{equation}
Following the strategy of the method of lines, the HV semi-discretization will be paired with appropriate time-marching method and in this case, we seek the discrete solutions:
\begin{equation}\label{eq:prelim_full}
  w_j^n\approx w(x_j,\;t^n)\;,\quad
  \overline{w}_{j+1/2}^n\approx \frac{1}{h}\int_{\mathcal{C}_j^{j+1}}w(x,\;t^n)dx\;,
\end{equation}
where $t^n = n\Delta t$, $0\le n\le M$, and $\Delta t=T/M$.

In the rest of this section, we briefly review the properties of the equation~\cref{eq:prelim_eqn} itself and the HV-discretization framework, which includes the general formula to construct arbitrary-order approximations to $w_x$ using both nodal and cell-averaged variables.
At the end, we provide a similar formula for approximating the second derivatives $w_{xx}$.
Following the previous work~\cite{XZeng:2019a}, these operators are called the hybrid-variable discrete differential operator or HV-DDO; and those approximating $w_x$ and $w_{xx}$ are denoted $[\mathcal{D}_x]$ and $[\mathcal{D}_{xx}]$, respectively.
Because the construction of $[\mathcal{D}_x]$ and $[\mathcal{D}_{xx}]$ connects tightly to Hermite interpolation, the HV discretization framework belongs to Hermite-type methods.

\subsection{Properties of the advection-diffusion equation}
\label{sec:prelim_prop}
The diffusion operator has the damping effect, in the sense that the solution to~\cref{eq:prelim_eqn} tends to have decreasing magnitude and better smoothness as $t$ increases.
%For the construction of numerical methods for partial differential equations, the most prominent property of the solutions to~\cref{eq:prelim_eqn} is probably the damping effect, i.e., the solution $w(\cdot,t)$ tends to have decreasing magnitude and better smoothness as $t$ increases.
There are several ways to see this, for example, one has the Green's function on the line:
\begin{equation}\label{eq:prelim_prop_green}
  \Phi(x,\;t) = \frac{1}{\sqrt{4\pi\nu t}}\exp\left(-\frac{x^2}{4\nu t}\right)\;,
\end{equation}
and then use periodic extension to find the closed-form solution to~\cref{eq:prelim_eqn} given the initial data $w(x,0)=w_0(x)$:
\begin{equation}\label{eq:prelim_prop_sol}
  w(x,\;t) = \int_0^L\Phi_L(x-ct-y,t)w_0(y)dy\;,\quad
  \Phi_L(x,\;t)\eqdef\sum_{k=-\infty,\;k\in\mathbb{Z}}^\infty\Phi(x-kL,t)\;.
\end{equation}
The solutions to~\cref{eq:prelim_eqn} are stable both in the sense of $L^\infty$ or $L^2$:
%The stability of the solutions to~\cref{eq:prelim_eqn} is often designated by:
%\begin{equation}\label{eq:prelim_prop_decay}
%  \mathcal{H}(w(\cdot,\;t_1)) \le \mathcal{H}(w(\cdot,\;t_2))\;,\quad\forall t_1<t_2\;,
%\end{equation}
%where $\mathcal{H}(w)$ is an increasing function of a positive linear functional of the solution $w(\cdot,t)$.
%Typical $\mathcal{H}$ includes $\mathcal{H}(w(\cdot,t))=\nrm{w(\cdot,t)}_{L^\infty[0,\,L]}$ or $\mathcal{H}(w)=\frac{1}{2}L_2[0,\;L](w(\cdot,t))^2$:
\begin{itemize}
  \item The $L^\infty$ stability can be established by a similar argument of~\cref{eq:prelim_prop_sol}: for all $t_1<t_2$,
    \begin{displaymath}
      w(x,\;t_2) = \int_0^L\Phi_L(x-c(t_2-t_1)-y,t_2-t_1)w(y,\;t_1)dy\;,
    \end{displaymath}
    thus for all $x$: $\abs{w(x,\;t_2)} \le \nrm{w(\cdot,t_1)}_\infty\int_0^L\Phi_L(x-c(t_2-t_1)-y,t_2-t_1)dy$ (note that both $\Phi$ and $\Phi_L$ are positive); and we immediately have $\nrm{w(\cdot,\;t_1)}_\infty\le\nrm{w(\cdot,\;t_2)}_\infty$ since:
    \begin{align*}
       &\ \int_0^L\Phi_L(x-c(t_2-t_1)-y,t_2-t_1)dy = \sum_{k=-\infty}^{\infty}\int_0^L\Phi(x-c(t_2-t_1)-y-kL,t_2-t_1)dy \\
      =&\ \int_{-\infty}^\infty\Phi(x-c(t_2-t_1)-y),t_2-t_1)dy = 1\;.
    \end{align*}
  \item For the $L^2$ stability, one has:
    \begin{align*}
      0 &= \int_0^Lw(w_t+cw_x-\nu w_{xx})dx = \frac{d}{dt}\int_0^L\frac{1}{2}w^2dx + \int_0^Lcwdw -\int_0^L\nu wdw_x \\
        &= \frac{d}{dt}\frac{1}{2}\nrm{w}_2^2 + \frac{c}{2}w^2\big|^L_0-\nu ww_x\big|^L_0+\nu\nrm{w_x}_2^2 
         = \frac{d}{dt}\frac{1}{2}\nrm{w}_2^2 + \nu\nrm{w_x}_2^2\;,
    \end{align*}
    using the periodic boundary conditions.
    Hence the $L^2$ stability follows from:
    \begin{equation}\label{eq:prelim_prop_l2}
      \frac{d}{dt}\frac{1}{2}\nrm{w}_2^2 = -\nu\nrm{w_x}_2^2 \le 0\;,
    \end{equation}
    and integrating~\cref{eq:prelim_prop_l2} from $t_1$ to $t_2>t_1$.
\end{itemize}

Lastly, a useful formula for studying the numerical accuracy and linear stability of discretization methods is derived by Fourier analysis.
Assuming $L^2$ solutions, the $L$-periodic initial condition $w_0(x)$ can be written as (extending to the complex-valued functions):
\begin{equation}\label{eq:prelim_prop_fr_init}
  w_0(x) = \sum_{k\in\mathbb{Z}}m_ke^{i\frac{2\pi k x}{L}}\;,
\end{equation}
then the corresponding solution to~\cref{eq:prelim_eqn} is:
\begin{equation}\label{eq:prelim_prop_fr_sol}
  w(x,\;t) = \sum_{k\in\mathbb{Z}}m_ke^{i\frac{2\pi k}{L}(x-ct)-\nu\frac{4\pi^2k^2}{L^2}t}
\end{equation}
In the frequency domain, the modes corresponding to the wave numbers $\kappa_k=2\pi k/L,\;k\in\mathbb{Z}$ are decoupled -- hence in accuracy and stability analysis one often considers the general wave number $\kappa$ and the {\it simple wave solution}:
\begin{equation}\label{eq:prelim_prop_fr_simp}
  w(x,\;t) = e^{i\kappa(x-ct)-\nu\kappa^2t}\;,
\end{equation}
which clearly corresponds to the initial condition $w_0(x)=e^{i\kappa x}$.

\begin{comment}
Here the convergence of the series $\Phi_L$ is not difficult to prove.
The right hand side of the definition of $\Phi_L$ is clearly $L$-periodic in $x$; hence we suppose $0\le x< L$ and use the fact that $\Phi>0$:
\begin{align*}
  0 &< \sqrt{4\pi\nu t}\Phi_L(x,\;t) = \left(\sum_{k=-\infty}^0+\sum_{k=1}^{\infty}\right)\exp\left(-\frac{(x-kL)^2}{4\nu t}\right) \\
    &\le \sum_{k=-\infty}^0\exp\left(-\frac{(0-kL)^2}{4\nu t}\right) + \sum_{k=1}^\infty\exp\left(-\frac{(L-kL)^2}{4\nu t}\right) \\
    &= 2\sum_{k=0}^\infty\exp\left(-\frac{k^2L^2}{4\nu t}\right) 
     \le 2\sum_{k=0}^\infty\frac{1}{1+k^2L^2/(4\nu t)} < \infty\;.
\end{align*}
\end{comment}

\subsection{Hybrid-variable discretization}
\label{sec:prelim_hv}
The essence of the hybrid-variable discretization is to use a linear combination of close-by nodal and cell-averaged variables to approximate the spatial derivatives.
To this end, the semi-discretization of~\cref{eq:prelim_eqn} for the cell-averaged variables are obtained by integrating the equation over $\mathcal{C}_j^{j+1}$:
\begin{subequations}\label{eq:prelim_hv_semi}
  \begin{equation}\label{eq:prelim_hv_semi_cell}
    \overline{w}'_{\phf{j}} + \frac{c}{h}(w_{j+1}-w_j) - \frac{\nu}{h}\left([\mathcal{D}_x^cw]_{j+1}-[\mathcal{D}_x^cw]_j\right) = 0\;,
  \end{equation}
  where $[\mathcal{D}^c_xw]_j$ denotes an approximation to $w_x(x_j,\,t)$ by nearby numerical solutions that has a centered stencil (denoted by the superscript $^c$);
  the semi-discretization formula for the nodal variables is given by:
  \begin{equation}\label{eq:prelim_hv_semi_node}
    w'_j+c[\mathcal{D}_xw]_j-\nu[\mathcal{D}_{xx}^cw]_j = 0\;,
  \end{equation}
  where $[\mathcal{D}_xw]_j$ is an approximation to $w_x(x_j,\,t)$ with either an upwind-biased stencil or a centered stencil (thus it is generally different from $[\mathcal{D}_x^cw]_j$), and $[\mathcal{D}_{xx}^cw]_j$ is an approximation to $w_{xx}(x_j,\,t)$ with a centered stencil.
\end{subequations}

We consider HV-DDOs $[\mathcal{D}_x]$, $[\mathcal{D}_x^c]$, and $[\mathcal{D}_{xx}^c]$ constructed on continuous stencils:
\begin{align}
  \label{eq:prelim_hv_dx}
  [\mathcal{D}_xw]_j &= \frac{1}{h}\sum_{k=-l}^{r-1}\alpha_k\overline{w}_{\phf{j+k}}+\frac{1}{h}\sum_{k=-l'}^{r'}\beta_kw_{j+k}\;, \\
  \label{eq:prelim_hv_dx_c}
  [\mathcal{D}_x^cw]_j &= \frac{1}{h}\sum_{k=-p}^{p-1}\alpha_k^c\overline{w}_{\phf{j+k}}+\frac{1}{h}\sum_{k=-p'}^{p'}\beta_k^cw_{j+k}\;, \\
  \label{eq:prelim_hv_dxx_c}
  [\mathcal{D}_{xx}^cw]_j &= \frac{1}{h^2}\sum_{k=-q}^{q-1}\mathcal{a}_k\overline{w}_{\phf{j+k}}+\frac{1}{h^2}\sum_{k=-q'}^{q'}\mathcal{b}_kw_{j+k}\;, 
\end{align}
where the non-negative integers $r,l,p,q$ denote the stencil for cell-averaged variables and the primed-integers denote that for nodal variables.
The constants $\alpha_k,\,\beta_k,\,\alpha^c_k,\,\beta^c_k,\,\mathcal{a}_k,\,\mathcal{b}_k$ are determined by requiring certain orders of accuracy for these operators.
For the stencils to be continuous, these numbers satisfy:
\begin{align}
  \notag
  &\max(0,l-1)\le l'\le l\;,\ \max(0,r-1)\le r'\le r\;,\ l+r+l'+r'\ge1\;; \\
  \label{eq:prelim_hv_compact}
  &\max(0,p-1)\le p'\le p\;,\ p+p'\ge1\;; \\
  \notag
  &\max(0,q-1)\le q'\le q\;,\ q+q'\ge1\;.
\end{align}
%These conditions indicate that we utilize a continuous range of nodal and cell-averaged variables to construct the DDOs.

To this end, the formal order or the local truncation error (LTE) of these operators is defined below.
\begin{definition}\label{def:prelim_hv_ddo}
  The operators $[\mathcal{D}_x]$, $[\mathcal{D}_x^c]$, and $[\mathcal{D}_{xx}^c]$ are $P_1^\powth\textrm{\sc-}$, $P_2^\powth\textrm{\sc-}$, and $P_3^\powth\textrm{\sc-}$order accurate for some integer $P_1,P_2,P_3\ge1$ if for any $C^\infty$ function $w(x)$:
  \begin{align}
    \label{eq:prelim_hv_dx_ddo}
    [\mathcal{D}_xw^\star]_j &= \partial_xw(x_j)+c_1\partial_x^{P_1+1}w(x_j)h^{P_1}+O(h^{P_1+1})\;, \\
    \label{eq:prelim_hv_dx_c_ddo}
    [\mathcal{D}_x^cw^\star]_j &= \partial_xw(x_j)+c_2\partial_x^{P_2+1}w(x_j)h^{P_2}+O(h^{P_2+2})\;, \\
    \label{eq:prelim_hv_dxx_c_ddo}
    [\mathcal{D}_{xx}^cw^\star]_j &= \partial_x^2w(x_j)+c_3\partial_x^{P_3+2}w(x_j)h^{P_3}+O(h^{P_3+2})\;, 
  \end{align}
  where the operators in the left-hand side are constructed using the exact cell-averaged and nodal quantities denoted by $\overline{w}^\star_{\phf{j+k}}$ and $w^\star_{j+k}$, and $c_1$, $c_2$, and $c_3$ are nonzero constants that are independent of the function $w$ and the cell size $h$.
\end{definition}
\begin{remark}\label{rm:prelim_hv_central}
  The even and odd powers of $h$ in the Taylor series expansions cancel in $[\mathcal{D}_x^c]$ and $[\mathcal{D}_{xx}^c]$, respectively; thus the higher-order terms in~\cref{eq:prelim_hv_dx_c_ddo} and~\cref{eq:prelim_hv_dxx_c_ddo} are $O(h^{P_{2,3}+2})$ instead of $O(h^{P_{2,3}+1})$.
\end{remark}
\begin{remark}\label{rm:prelim_hv_poly}
Following~\cref{def:prelim_hv_ddo}, $[\mathcal{D}_x]$ and $[\mathcal{D}_x^c]$ are $P^\powth\textrm{\sc-}$order accurate if and only if for all polynomial $w(x)$ of degree no more than $P$, there is:
\begin{equation}\label{eq:prelim_hv_dx_poly}
  [\mathcal{D}_xw^\star]_j = [\mathcal{D}_x^cw^\star]_j = w'(x_j)\;,
\end{equation}
whereas there exists a polynomial of degree $P+1$ in the case of $[\mathcal{D}_x]$ or $P+2$ in the case of $[\mathcal{D}_x^c]$, such that~\cref{eq:prelim_hv_dx_poly} does not hold.
Similarly, $[\mathcal{D}_{xx}^c]$ is $P^\powth\textrm{\sc-}$order accurate if and only if for all polynomial $w(x)$ of degree no more than $P+1$, there is:
\begin{equation}\label{eq:prelim_hv_dxx_poly}
  [\mathcal{D}_{xx}^cw^\star]_j = w''(x_j)\;,
\end{equation}
whereas there exists a polynomial of degree $P+3$ such that~\cref{eq:prelim_hv_dxx_poly} does not hold.
These criteria are commonly known as the $P$-exactness conditions.
\end{remark}

\subsection{Hybrid-variable discrete differential operators of arbitrary order}
\label{sec:prelim_ddo}
HV-DDOs with optimal accuracy can be constructed utilizing Hermite interpolation polynomials, for example, these coefficients for $[\mathcal{D}_x]$ and $[\mathcal{D}_x^c]$ are derived in an earlier work~\cite{XZeng:2019a} and summarized in the following results.
\begin{theorem}[{\cite[Theorem 1]{XZeng:2019a}}]\label{thm:prelim_ddo_dx}
  There exists a unique operator $[\mathcal{D}_x]$ given by~\cref{eq:prelim_hv_dx}, which is $(l+r+l'+r')^\powth\textrm{\sc-}$order accurate.
  The coefficients of this operator are:
  \begin{subequations}\label{eq:prelim_ddo_dx_coef}
    \begin{align}
      \label{eq:prelim_ddo_dx_alpha_neg}
      \alpha_\nu &= -(1-\delta_{ll'})\frac{2}{l^2}C_{-l}^{l,r}C_{-l}^{l,r'}-\sum_{k=-l'}^\nu\frac{2(1+k(\zeta_k^{l,r}+\zeta_k^{l',r'}))}{k^2}C_k^{l,r}C_k^{l',r'}\;,\quad\forall -l\le\nu<0\;,\\
      \label{eq:prelim_ddo_dx_alpha_pos}
      \alpha_\nu &= \sum_{k=\nu+1}^{r'}\frac{2(1+k(\zeta_k^{l,r}+\zeta_k^{l',r'}))}{k^2}C_k^{l,r}C_k^{l',r'}+(1-\delta_{rr'})\frac{2}{r^2}C_r^{l,r}C_r^{l',r}\;,\quad\forall 0\le\nu\le r-1\;;\\
      \label{eq:prelim_ddo_dx_beta_z}
      \beta_0 &= 2(\zeta_0^{l,r}+\zeta_0^{l',r'})\;, \\
      \label{eq:prelim_ddo_dx_beta_nz}
      \beta_\nu &= -\frac{2}{\nu}C_\nu^{l,r}C_\nu^{l',r'}\;,\quad -l'\le\nu\le r',\;\ \nu\ne0\;.
    \end{align}
  \end{subequations}
  Here $\delta_{ll'}$ and $\delta_{rr'}$ are Kronecker deltas that equal $1$ when $l=l'$ and $r=r'$, respectively, and equal $0$ otherwise; the constants $\zeta$'s and $C$'s are defined by:
  \begin{equation}\label{eq:prelim_ddo_const}
    \zeta_{\nu}^{l,r} = \sum_{-l\le k\le r,\;k\ne\nu}\frac{1}{\nu-k} = H_{l+\nu}-H_{r-\nu}\;,\quad
    C_{\nu}^{l,r} = \frac{l!r!}{(l+\nu)!(r-\nu)!}\;,
  \end{equation}
  for all $-l \le \nu \le r$ and $\nu\in\mathbb{Z}$.
  Here $H_n=1+\frac{1}{2}+\cdots+\frac{1}{n}$ are Harmonic numbers with the convention $H_0=0$.
\end{theorem}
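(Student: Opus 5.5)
Since $[\mathcal{D}_x]$ is a linear map of the data, the plan is to recast it through Hermite-type interpolation. For fixed $j$ set $x=x_j+sh$. The data $\overline{w}^\star_{\phf{j+k}}$, $-l\le k\le r-1$, are differences of a primitive: with $W(s)=\frac{1}{h}\int_{x_j}^{x_j+sh}w\,dx$ we have $\overline{w}^\star_{\phf{j+k}}=W(k+1)-W(k)$ and $w^\star_{j+k}=W'(k)$. Up to the irrelevant constant $W(-l)$, the cell averages therefore give the values $W(k)$ for $-l\le k\le r$. The nodal data give $W'(k)$ for $-l'\le k\le r'$.

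\textbf{Step 1 (existence and uniqueness).} Interpolate $W$ by a polynomial of degree $n=l+r+l'+r'+1$ that matches the $l+r+1$ values and the $l'+r'+1$ derivatives. A homogeneous solution $Q$ of this problem vanishes at $l+r+1$ consecutive integers, so by Rolle's theorem $Q'$ has $l+r$ zeros strictly between them. Together with the $l'+r'+1$ prescribed zeros of $Q'$ at integers, this gives $Q'$ at least $n$ distinct roots; this count uses the continuity constraint \cref{eq:prelim_hv_compact}. Since $\deg Q'\le n-1$, we get $Q'\equiv0$, and the data are invariant under constants. Defining $[\mathcal{D}_xw]_j=\frac1h Q_W''(0)$, where $Q_W$ is the interpolant, gives an operator that is exact for all $w$ of degree at most $n-1=l+r+l'+r'$.

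For uniqueness, the exactness conditions for monomials of degree at most $l+r+l'+r'$ form a square linear system in the $(l+r)+(l'+r'+1)$ unknowns. Its matrix is the transpose of the interpolation system just shown to be nonsingular modulo constants. So the $P$-exactness characterization of \cref{rm:prelim_hv_poly} yields uniqueness. Failure at degree $P+1$ follows because the interpolation error $W-Q_W$ has leading factor $\prod_k (s-k)\prod_k(s-k)$ over the stencil, and its second derivative at $s=0$ is nonzero generically.

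\textbf{Step 2 (nodal coefficients).} I would write the interpolant in Lagrange–Hermite form. Let $\ell(s)=\prod_{k=-l}^{r}(s-k)$ and $m(s)=\prod_{k=-l'}^{r'}(s-k)$, so the error is proportional to $\ell m$. The coefficient $\beta_\nu$ is the second derivative at $0$ of the basis function attached to $W'(\nu)$, which has the form $\frac{\ell(s)m(s)}{(s-\nu)\ell(\nu)m'(\nu)}$ up to normalization.

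For $\nu\ne0$ this basis function vanishes doubly at $0$ through the factors $s\cdot s$. Differentiating twice at $s=0$ then gives
$$\beta_\nu=-\frac{2}{\nu}\,\frac{\ell(s)/s\,\big|_0\; m(s)/s\,\big|_0}{\ell'(\nu)m'(\nu)},$$
and the factorial ratios $\ell(s)/s|_0\,/\,\ell'(\nu)$ reduce to $C_\nu^{l,r}$ with the sign absorbed. For $\nu=0$ one expands $\log$-derivatives, which produces the harmonic sums $\zeta_0^{l,r}+\zeta_0^{l',r'}$.

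\textbf{Step 3 (cell-average coefficients).} The value-type basis functions $\phi_k$ attached to $W(k)$ give $\alpha_\nu=\sum_{k\ge\nu+1}\phi_k''(0)$ by summation by parts, since $\sum_k \phi_k\equiv1$. For nodes $k$ that carry both a value and a derivative, $\phi_k$ is the standard Hermite form
$$\phi_k(s)=\big(1-(s-k)(\ell'' /\ell' +m'/m')\big|_{k}\cdots\big)\frac{\ell m}{(s-k)^2\cdots}.$$
Its second derivative at $0$ produces the terms $\frac{2(1+k(\zeta_k^{l,r}+\zeta_k^{l',r'}))}{k^2}C_k^{l,r}C_k^{l',r'}$. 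The endpoint nodes $-l$ (when $l'=l-1$) and $r$ (when $r'=r-1$) carry only a value, which gives the Kronecker-delta terms. The sign split $\nu<0$ versus $\nu\ge0$ comes from summing from the left instead, using $\sum_k\phi_k''=0$.

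I expect Step 3 to be the main obstacle. Correctly evaluating $\phi_k''(0)$ for the mixed simple/double node structure, and matching the $\zeta$ and $C$ normalizations, requires careful bookkeeping of signs and factorials. The existence and uniqueness argument in Step 1 is routine by comparison.
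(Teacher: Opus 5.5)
Your proposal is correct. It is essentially the argument the paper gives for the companion result on $[\mathcal{D}_{xx}^c]$; this theorem itself is only quoted from the earlier work. In both, you pass to the primitive $W$, expand in Hermite fundamental polynomials (using the unbalanced $l_\nu\hat l_\nu$ form at the simple endpoint nodes), and read $\beta_\nu$ off the derivative-type basis and $\alpha_\nu$ off the value-type basis by summation by parts.

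A few small touch-ups. The Rolle count in Step 1 does not need the continuity constraint, since the Rolle zeros are non-integers and so cannot coincide with the integer derivative nodes. The constraint is really used through the inclusion $[-l',r']\subseteq[-l,r]$: that is what makes $\ell m$ the error polynomial. Since $s=0$ is a double node, $(\ell m)''(0)=2\ell'(0)m'(0)\neq0$ always, not just generically, which gives exact order $l+r+l'+r'$. Finally, the Hermite factor in Step 3 should read $1-(s-k)(\zeta_k^{l,r}+\zeta_k^{l',r'})$, and the Step 2 normalization should use $\ell'(\nu)$ rather than $\ell(\nu)$.
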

As a direct corollary of~\cref{thm:prelim_ddo_dx}, we may set $l=r=p$ and $l'=r'=p'$ to obtain:
\begin{corollary}\label{cor:prelim_ddo_dx_c}
  There exists a unique operator $[\mathcal{D}_x^c]$ given by~\cref{eq:prelim_hv_dx_c}, which is $2(p+p')^\powth\textrm{\sc-}$order accurate.
  The coefficients are:
  \begin{subequations}\label{eq:prelim_ddo_dx_c_coef}
    \begin{align}
      \label{eq:prelim_ddo_dx_c_alpha_neg}
      \alpha_\nu^c &= -\alpha_{-1-\nu}^c\;,\quad\forall -p\le\nu<0\;, \\
      \label{eq:prelim_ddo_dx_c_alpha_pos}
      \alpha_\nu^c &= \sum_{k=\nu+1}^{p'}\!\frac{2(1\!+\!k(\zeta_k^{p,p}\!+\!\zeta_k^{p'\!,p'}))}{k^2}C_k^{p,p}C_k^{p'\!,p'}+(1\!-\!\delta_{pp'})\frac{2}{p^2}C_p^{p,p}C_p^{p'\!,p},\ \forall 0\le\nu\le p-1; \\
      \label{eq:prelim_ddo_dx_c_beta_z}
      \beta_0^c   &= 0\;, \\
      \label{eq:prelim_ddo_dx_c_beta_nz}
      \beta_\nu^c &= -\frac{2}{\nu}C_\nu^{p,p}C_\nu^{p'\!,p'} = -\beta_{-\nu}^c\;,\quad -p'\le\nu\le p'\;,\ \nu\ne0\;.
    \end{align}
  \end{subequations}
  These coefficients are {\it anti-symmetric} in the sense that $\alpha_\nu^c+\alpha_{-1-\nu}^c=0,\;-p\le\nu\le(p-1)$ and $\beta_{\nu}^c+\beta_{-\nu}^c=0,\;-p'\le\nu\le p'$.
\end{corollary}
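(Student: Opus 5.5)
Specialize \cref{thm:prelim_ddo_dx} to $l=r=p$, $l'=r'=p'$. The conditions~\cref{eq:prelim_hv_compact} for $[\mathcal{D}^c_x]$ are then exactly the continuity conditions required there, so existence and uniqueness are inherited directly. The order is a priori $l+r+l'+r'=2(p+p')$. The only work is to evaluate the coefficients~\cref{eq:prelim_ddo_dx_coef} under this symmetric choice and to verify the anti-symmetry.

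First, record the symmetries of the constants in~\cref{eq:prelim_ddo_const} when $l=r$:
\begin{displaymath}
C_{-\nu}^{p,p}=\frac{p!p!}{(p-\nu)!(p+\nu)!}=C_\nu^{p,p}\;,\qquad
\zeta_{-\nu}^{p,p}=H_{p-\nu}-H_{p+\nu}=-\zeta_\nu^{p,p}\;,
\end{displaymath}
and likewise for $p'$. In particular $\zeta_0^{p,p}=\zeta_0^{p',p'}=0$, so~\cref{eq:prelim_ddo_dx_beta_z} gives $\beta_0^c=0$. Also~\cref{eq:prelim_ddo_dx_beta_nz} gives $\beta_{-\nu}^c=\frac{2}{\nu}C_{\nu}^{p,p}C_\nu^{p',p'}=-\beta^c_\nu$, since the product of $C$'s is even in $\nu$. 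The formula~\cref{eq:prelim_ddo_dx_c_alpha_pos} is literally~\cref{eq:prelim_ddo_dx_alpha_pos} with the substitutions made; here $\delta_{rr'}=\delta_{pp'}$.

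The remaining step, which needs the only real care, is $\alpha^c_\nu=-\alpha^c_{-1-\nu}$ for $-p\le\nu<0$. Write $g(k)=\frac{2(1+k(\zeta_k^{p,p}+\zeta_k^{p',p'}))}{k^2}C_k^{p,p}C_k^{p',p'}$. By the symmetries above, $k\zeta_k$ is even in $k$, so $g(-k)=g(k)$. Put $\mu=-1-\nu\in[0,p-1]$. The sum in~\cref{eq:prelim_ddo_dx_alpha_neg} is $\sum_{k=-p'}^{\nu}g(k)$; substituting $k\to-k$ turns it into $\sum_{k=\mu+1}^{p'}g(k)$, which is the sum in~\cref{eq:prelim_ddo_dx_c_alpha_pos} for $\alpha^c_\mu$. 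The boundary term $-(1-\delta_{pp'})\frac{2}{p^2}C_{-p}^{p,p}C_{-p}^{p,p'}$ must be matched with $(1-\delta_{pp'})\frac{2}{p^2}C_p^{p,p}C_p^{p',p}$. This term only matters when $p'=p-1$, and then both products equal the same quantity up to the symmetric index conventions. I expect this index bookkeeping in the boundary term (the mixed superscripts $C^{l,r'}$ versus $C^{l',r}$) to be the main place to check carefully. One can double-check it independently by uniqueness: the reflected operator $w\mapsto -[\mathcal{D}_x^c \tilde w]$ with $\tilde w(x)=w(-x)$ has the same stencil and the same order. Uniqueness then forces the coefficients to be anti-symmetric, which also justifies the identity without the explicit boundary computation.

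Finally, the central Taylor cancellation in \cref{rm:prelim_hv_central} is consistent with this anti-symmetry. The order of the operator is exactly $2(p+p')$ as stated.
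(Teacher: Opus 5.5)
Your proposal is correct and takes the same route as the paper, which obtains the corollary simply by setting $l=r=p$, $l'=r'=p'$ in \cref{thm:prelim_ddo_dx}; your parity bookkeeping (evenness of $C_k^{p,p}C_k^{p',p'}$ and of $k(\zeta_k^{p,p}+\zeta_k^{p',p'})$) supplies the details the paper leaves implicit. The boundary term you flagged is immediate, since $C_{-p}^{p,p'}=\frac{p!\,p'!}{(p+p')!}=C_p^{p',p}$, so your reflection/uniqueness argument is a valid but unnecessary cross-check.
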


\begin{comment}
Note that these theorems seem to contradict the common knowledge that central difference tends to provide one-order higher accuracy when the same number of discrete solutions are used.
For example, both $[\mathcal{D}_xw]_j = (w_j-\overline{w}_{\mhf{j}})/(h/2)$ and $[\mathcal{D}_xw]_j = (\overline{w}_{\phf{j}}-\overline{w}_{\mhf{j}})/h$ uses two-point difference whereas the former is first-order accurate and the latter is second-order.
In the view of Corollary~\cref{cor:prelim_ddo_dx_c}, the central difference is actually a {\it three-point} formula whose coefficient $\beta_0$ happens to be zero on a uniform grid; hence there is no contradiction in the uniqueness result of~\cref{thm:prelim_ddo_dx}.
\end{comment}

The coefficients for $[\mathcal{D}_{xx}^c]$ can be derived in a similar way as given below, where the proof also provides the connection between HV discretization and Hermite interpolations.
\begin{theorem}\label{thm:prelim_ddo_dxx}
  There exists a unique operator $[\mathcal{D}_{xx}^c]$ given by~\cref{eq:prelim_hv_dxx_c}, which is at least $2(q+q')^\powth\textrm{\sc-}$order accurate.
  The corresponding coefficients are:
  \begin{subequations}\label{eq:prelim_ddo_dxx_c_coef}
    \begin{align}
      \label{eq:prelim_ddo_dxx_c_a_neg}
      \mathcal{a}_\nu &= \mathcal{a}_{-1-\nu}\;,\quad\forall -q\le\nu<0\;, \\
      \label{eq:prelim_ddo_dxx_c_a_pos}
      \mathcal{a}_\nu &= \sum_{k=\nu+1}^{q'}\!\frac{6(2\!+\!k(\zeta_k^{q,q}\!+\!\zeta_k^{q'\!,q'}))}{k^3}C_k^{q,q}C_k^{q'\!,q'}+(1\!-\!\delta_{qq'})\frac{6}{q^3}C_q^{q,q}C_q^{q'\!,q},\ \forall 0\le\nu\le q-1; \\
      \label{eq:prelim_ddo_dxx_c_b_z}
      \mathcal{b}_0 &= -\sum_{-q\le k\le q,\,k\ne0}\frac{3}{k^2} - \sum_{-q'\le k\le q',\,k\ne0}\frac{3}{k^2}\;, \\
      \label{eq:prelim_ddo_dxx_c_b_nz}
      \mathcal{b}_\nu &= -\frac{6}{\nu^2}C_{\nu}^{q,q}C_{\nu}^{q',q'}\;,\quad -q'\le\nu\le q'\;,\ \nu\ne0\;.
    \end{align}
  \end{subequations}
  Furthermore, for this operator, $B_0\eqdef\sum_{k=-q'}^{q'}\mathcal{b}_k<0$.
\end{theorem}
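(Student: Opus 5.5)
We will follow the Hermite-interpolation route used for \cref{thm:prelim_ddo_dx} in~\cite{XZeng:2019a}, and work at $j=0$ with $h=1$ by translation and scaling.

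\textbf{Step 1: reduction to Hermite interpolation.} Let $W(x)=\int_0^x w(s)\,ds$. The exact cell averages are then differences $\overline{w}^\star_{\phf{k}}=W(k+1)-W(k)$, and the nodal values are derivatives $w^\star_k=W'(k)$. On the node set $\{-q,\dots,q\}$, take the Hermite data consisting of $W(k)$ for all $k$ (one normalization being $W(0)=0$) together with $W'(k)$ for $\abs{k}\le q'$. This gives $(2q+1)+(2q'+1)$ conditions, so it determines a unique Hermite interpolant $\Pi W$ of degree $2q+2q'+1$. We define $[\mathcal{D}_{xx}^cw]_0=(\Pi W)'''(0)$.

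\textbf{Step 2: form, uniqueness and exactness.} Writing $(\Pi W)'''(0)$ in the Lagrange--Hermite basis gives a linear combination of the $W(k)$ and the $W'(k)$. Because the basis sums to reproduce constants, the $W(k)$ part rewrites via summation by parts as a combination of the differences $\overline{w}^\star_{\phf{k}}$, which is exactly the form~\cref{eq:prelim_hv_dxx_c}. The operator is exact whenever $W$ has degree at most $2q+2q'+1$, i.e.\ whenever $w$ has degree at most $2(q+q')$. By \cref{rm:prelim_hv_poly} this gives order at least $2(q+q')-1$, and symmetry improves it: the stencil is symmetric, so the coefficients satisfy $\mathcal{a}_\nu=\mathcal{a}_{-1-\nu}$ and $\mathcal{b}_\nu=\mathcal{b}_{-\nu}$. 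Then odd polynomials $w$ give $[\mathcal{D}_{xx}^cw^\star]_0=0=w''(0)$, so exactness extends to degree $2(q+q')+1$, which is order $2(q+q')$. For uniqueness, the number of free coefficients, $2q+2q'+1$, equals the number of exactness conditions for degrees $0,\dots,2(q+q')$. The linear system is nonsingular because a nonzero kernel element would give a nonzero polynomial of degree at most $2q+2q'+1$ with vanishing Hermite data, and the Hermite data determine such a polynomial uniquely.

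\textbf{Step 3: explicit coefficients.} Write $\ell(x)=\prod_{k\ne0}(x-k)^{m_k}$, where $m_k=2$ if $\abs{k}\le q'$ and $m_k=1$ otherwise, and expand the Hermite basis functions to third order about the relevant node. Note that $C_k^{q,q}C_k^{q',q'}$ arises as $\prod_{j\ne k}\abs{k-j}^{-1}$-type factors, and that $\zeta$ arises from logarithmic derivatives of $\ell$.
\begin{itemize}
\item The $\mathcal{b}_\nu$ with $\nu\ne0$ come from the leading terms, giving $-6C C/\nu^2$.
\item $\mathcal{b}_0$ requires the second logarithmic derivative at $0$. Since $\zeta_0=0$ by symmetry, it reduces to $-3\sum m_k/k^2$, split into the two sums shown.
\item The $\mathcal{a}_\nu$ follow from cumulative summation of the $W(k)$ coefficients from the right. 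The term $(1-\delta_{qq'})$ handles the endpoint $k=q$, which carries only a simple node when $q'<q$.
\end{itemize}
This third-order Taylor bookkeeping of $\ell$ is the main obstacle. We will mimic the first-order computation of \cref{thm:prelim_ddo_dx}, replacing $1/k$ by $6/k^2$-type factors and $1+k\zeta$ by $2+k\zeta$.

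\textbf{Step 4: $B_0<0$.} Exactness for the constant $w=1$ gives $\sum_k\mathcal{a}_k+B_0=0$, so $B_0=-\sum_k\mathcal{a}_k=-2\sum_{\nu=0}^{q-1}\mathcal{a}_\nu$. Rather than check signs of the $\mathcal{a}_\nu$ directly, we argue $B_0<0$ from the formulas:
\begin{displaymath}
  B_0=\mathcal{b}_0+2\sum_{\nu=1}^{q'}\mathcal{b}_\nu=-3\sum_{k\ne0}\frac{m_k}{k^2}-\frac{12}{1}\sum_{\nu=1}^{q'}\frac{C_\nu^{q,q}C_\nu^{q',q'}}{\nu^2}\;.
\end{displaymath}
Here $C_\nu^{q,q}=q!^2/((q+\nu)!(q-\nu)!)>0$, so every term in both sums is strictly negative, and $q+q'\ge1$ makes the first sum nonempty. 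Hence $B_0<0$.
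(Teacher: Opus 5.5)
Your route is the paper's route. You take the primitive $W$ and Hermite-interpolate on $\{-q,\dots,q\}$ with derivative data on $|k|\le q'$. You read off $\Delta\mathcal{a}_\nu=-h_\nu'''(0)$ and $\mathcal{b}_\nu=g_\nu'''(0)$ after summation by parts. Reflection symmetry (via uniqueness) gives the extra even order, and $B_0<0$ follows because every $\mathcal{b}_\nu$ is negative. Steps 1, 2 and 4 are sound.

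The gap is Step 3. It states the coefficient formulas rather than deriving them, and those formulas are most of the theorem's content. ``Mimic the first-order computation, replacing $1+k\zeta$ by $2+k\zeta$'' is pattern-matching to the target, not a calculation.

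Your gloss on $\mathcal{b}_\nu$, $\nu\ne0$, as coming ``from the leading terms'' is also misleading in a way that matters. Take $h=1$, $x_j=0$, $q'=q$, and write $l_\nu(x)=(x/\nu)Q_\nu(x)$. Then $g_\nu'''(0)=6Q_\nu(0)^2/\nu^2-12Q_\nu(0)Q_\nu'(0)/\nu$, whose leading piece is \emph{positive}. The value $-6(C_\nu^{q,q})^2/\nu^2$ appears only after using $(\ln Q_\nu)'(0)=-\sum_{k\ne0,\nu}1/k=1/\nu$, which relies on the symmetry of the node set. Step 4 rests on exactly this sign, so it must be computed, not quoted.

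The missing computation is short.

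\textbf{Case $q'=q$.} For $\nu\ne0$ one has $l_\nu'(\nu)=\zeta_\nu^{q,q}$, so $h_\nu=[1-2\zeta_\nu^{q,q}(x-\nu)]\,x^2Q_\nu(x)^2/\nu^2$. Hence $h_\nu'''(0)=6\,(h_\nu/x^2)'(0)=12(1+\nu\zeta_\nu^{q,q})(C_\nu^{q,q})^2/\nu^3$. Since $\mathcal{a}_q=0$, this gives $\mathcal{a}_\nu=\sum_{k=\nu+1}^{q}h_k'''(0)$ for $0\le\nu\le q-1$, which is the stated formula. Your symmetry argument then supplies $\nu<0$.

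\textbf{Case $q'=q-1$.} Replace $l_\nu^2$ by $l_\nu\hat l_\nu$, with $\hat l_\nu=(x/\nu)\hat Q_\nu$ built on $\{-q+1,\dots,q-1\}$.
For $1\le|\nu|\le q-1$, both logarithmic derivatives at $0$ equal $1/\nu$. This gives $\mathcal{b}_\nu=-6C_\nu^{q,q}C_\nu^{q',q'}/\nu^2$ and $h_\nu'''(0)=6(2+\nu(\zeta_\nu^{q,q}+\zeta_\nu^{q',q'}))C_\nu^{q,q}C_\nu^{q',q'}/\nu^3$.
At the simple endpoint $h_q=l_q\hat l_q$, one has $(\ln Q_q)'(0)=1/q$ and $(\ln\hat Q_q)'(0)=0$. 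So $h_q'''(0)=6C_q^{q,q}C_q^{q',q}/q^3$, which is exactly the $(1-\delta_{qq'})$ term.
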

\begin{proof}
  We begin with the case $q'=q$ and first show that there exists a unique operator $[\mathcal{D}_{xx}^c]$ that is at least $(4q-1)^\powth\textrm{\sc-}$order accurate; next, a simple symmetry argument shows that this unique operator is actually at least $(4q)^\powth\textrm{\sc-}$order accurate.
  Following~\cref{eq:prelim_hv_dxx_c_ddo}, let\footnote{Here $\mathbb{P}^n$ denotes the space of all polynomials of degree no more than $n$.} $w\in\mathbb{P}^{4q}$ be arbitrary and $W\in\mathbb{P}^{4q+1}$ be a primitive of $w$, we have:
  \begin{displaymath}
    w_{j+k}^\star = W'(x_{j+k})\;,\quad
    \overline{w}_{\phf{j+k}}^\star = \frac{1}{h}(W(x_{j+k+1})-W(x_{j+k}))\;;
  \end{displaymath}
  hence~\cref{eq:prelim_hv_dxx_c_ddo} rewrites as:
  \begin{align}
    \notag
    W'''(x_j) &= \frac{1}{h^3}\sum_{k=-q}^{q-1}\mathcal{a}_k(W(x_{j+k+1})-W(x_{j+k}))+\frac{1}{h^2}\sum_{k=-q}^q\mathcal{b}_kW'(x_{j+k}) \\
    \label{eq:prelim_ddo_dxx_exact}
              &= -\frac{1}{h^3}\sum_{k=-q}^q\Delta\mathcal{a}_kW(x_{j+k}) + \frac{1}{h^2}\sum_{k=-q}^q\mathcal{b}_kW'(x_{j+k})\;,
  \end{align}
  where $\Delta\mathcal{a}_k\eqdef\mathcal{a}_k-\mathcal{a}_{k-1},\,-q\le k\le q$ and $\mathcal{a}_{-q-1}=\mathcal{a}_q\equiv0$; by definition, $\sum_{k=-q}^q\Delta\mathcal{a}_k=0$.

  Following the Hermite interpolation theory, any $W\in\mathbb{P}^{4q+1}$ can be uniquely written as
  \begin{equation}\label{eq:prelim_ddo_herm}
    W(x) = \sum_{k=-q}^qW(x_{j+k})h_k(x) + \sum_{k=-q}^qW'(x_{j+k})g_k(x)\;,
  \end{equation}
  where $h$'s and $g$'s are the fundamental polynomials of the first and second kind of Hermite interpolation:
  \begin{align}
    \label{eq:prelim_ddo_h}
    h_\nu(x) &= [1-2l_\nu'(x_{j+\nu})(x-x_{j+\nu})](l_\nu(x))^2\;, \\
    \label{eq:prelim_ddo_g}
    g_\nu(x) &= (x-x_{j+\nu})(l_\nu(x))^2\;.
  \end{align}
  Here $l_\nu$ are the Lagrangian interpolation polynomials corresponding to the points $\{x_{j+k}:\;-q\le k\le q\}$:
  \begin{equation}\label{eq:prelim_ddo_lag}
    l_\nu(x) = \frac{\prod_{-q\le k\le q,\,k\ne\nu}(x-x_{j+k})}{\prod_{-q\le k\le q,\,k\ne\nu}(x_{j+\nu}-x_{j+k})}\;,\quad -q\le\nu\le q\;.
  \end{equation}

  Comparing~\cref{eq:prelim_ddo_dxx_exact} and~\cref{eq:prelim_ddo_herm}, we clearly obtain:
  \begin{displaymath}
    \Delta\mathcal{a}_\nu = -h^3h'''_\nu(x_j)\;,\quad
    \mathcal{b}_\nu = h^2g'''_\nu(x_j)\;,\quad -q\le \nu\le q\;.
  \end{displaymath}
  This concludes that there exists a $[\mathcal{D}_{xx}^c]$ with at least $(4q-1)^\powth$-order of accuracy, whose coefficients are given by:
  \begin{align}
    \label{eq:prelim_ddo_dxx_eqcoef_a}
    \mathcal{a}_\nu &= -h^3\sum_{k=-q}^{\nu}h_k'''(x_j) \;,\quad -q\le\nu\le q-1\;; \\
    \label{eq:prelim_ddo_dxx_eqcoef_b}
    \mathcal{b}_\nu &= h^2g_\nu'''(x_j)\;,\quad -q\le\nu\le q\;.
  \end{align}
  Next, because we have a uniform grid, it is not difficult to check that $\mathcal{b}_\nu=\mathcal{b}_{-\nu}$ and $\mathcal{a}_\nu=\mathcal{a}_{-\nu-1}$ and simple Taylor series expansion analysis shows that $[\mathcal{D}_{xx}^c]$ must be even order of accuracy; hence it is at least $(4q)^\powth\textrm{\sc-}$order accurate.

  For the coefficients, we focus on $\mathcal{b}$ first and compute:
  \begin{align*}
    g'''_\nu(x_j) &= 6l_\nu'(x_j)^2-6\nu hl_\nu'(x_j)l_\nu''(x_j)\;,\quad\nu\ne0\;,\\ 
    g'''_0(x_j)   &= 6l_0'(x_j)^2+6l_0''(x_j)\;;
  \end{align*}
  and
  \begin{align*}
    &l_\nu'(x_j) = \frac{(-1)^{\nu-1}}{\nu h}C_\nu^{q,q}\;,\ 
    &&l_\nu''(x_j) = \frac{2(-1)^{\nu-1}}{\nu^2h^2}C_\nu^{q,q}\;, && \nu\ne0\;; \\
    &l_0'(x_j) = \frac{1}{h}\zeta_0^{q,q} = 0\;,\ 
    &&l_0''(x_j) = -\frac{1}{h^2}\sum_{-q\le k\le q,\,k\ne0}\frac{1}{k^2}\;.
  \end{align*}
  The expressions~\cref{eq:prelim_ddo_dxx_c_b_z} and~\cref{eq:prelim_ddo_dxx_c_b_nz} follow immediately; and it is easy to see that all $\mathcal{b}_\nu,\;-q\le\nu\le q$ are negative, hence $B_0$ is also negative.

  Calculation of the $\mathcal{a}$ coefficients follows from:
  \begin{align*}
    h'''_\nu(x_j) &= -12l_\nu'(x_{j+\nu})l_\nu'(x_j)^2 + 6[1+2\nu hl_\nu'(x_{j+\nu})]l_\nu'(x_j)l_\nu''(x_j)\;,\quad\nu\ne0\;, \\
    h'''_0(x_j)   &= 0\;, % -12l_0'(x_j)^3-6l_0'(x_j)l_0''(x_j)+2l_0'''(x_j) = 0\;,
  \end{align*}
  where the latter equality follows immediately from the fact that $h_0(x)$ is even about $x_j$.
  Using previous equalities as well as:
  \begin{displaymath}
    l_\nu'(x_{j+\nu}) = \frac{1}{h}\zeta_{\nu}^{q,q}\;,\quad -q\le \nu\le q\;,
  \end{displaymath}
  we compute for $\nu\ne0$:
  \begin{displaymath}
    h'''_\nu(x_j) = \frac{12(1+\nu\zeta_{\nu}^{q,q})}{\nu^3h^3}(C_\nu^{q,q})^2\;;
  \end{displaymath}
  Because $\zeta_{\nu}^{q,q}$ is odd in $\nu$ and $C_\nu^{q,q}$ is even in $\nu$, we have $h'''_\nu(x_j)=-h_{-\nu}'''(x_j)$ and therefore given $-p\le\nu<0$ (or $0\le-1-\nu<p$):
  \begin{displaymath}
    \mathcal{a}_{-1-\nu} = - h^3\sum_{k=-q}^{\nu}h_k'''(x_j) - h^3\sum_{k=\nu+1}^{-1-\nu}h_k'''(x_j) = - h^3\sum_{k=-q}^{\nu}h_k'''(x_j) = \mathcal{a}_\nu\;.
  \end{displaymath}
  Lastly, given $0\le\nu\le p-1$, \cref{eq:prelim_ddo_dxx_c_a_pos} follows from:
  \begin{displaymath}
    a_\nu = -h^3\sum_{k=-q}^\nu h_k'''(x_j) = -h^3\sum_{k=-q}^{-1-\nu}h_k'''(x_j) = h^3\sum_{k=\nu+1}^qh_k'''(x_j) 
    = \sum_{k=\nu+1}^q\frac{12(1+\nu\zeta_\nu^{q,q})}{\nu^3}(C_\nu^{q,q})^2\;.
  \end{displaymath}
  
  \smallskip

  The second case $q'=q-1$ is similar, except that now we need the fundamental polynomials of an unbalanced Hermite interpolation:
  \begin{align*}
    h_{-q}(x) &= l_{-q}(x)\hat{l}_{-q}(x)\;,\\
    h_\nu(x) &= \left[1-(l'_\nu(x_{j+\nu})+\hat{l}'_\nu(x_{j+\nu}))(x-x_{j+\nu})\right]l_\nu(x)\hat{l}_\nu(x)\;,\quad -q+1\le\nu\le q-1\;, \\
    h_q(x) &= l_q(x)\hat{l}_q(x)\;,\\
    g_\nu(x) &= (x-x_{j+\nu})l_\nu(x)\hat{l}_\nu(x)\;,\quad -q+1\le\nu\le q-1\;.
  \end{align*}
  Here $l_\nu,\,-q\le\nu\le q$ are the same as before; and $\hat{l}_\nu,\,-q\le\nu\le q$ are given by:
  \begin{displaymath}
    \hat{l}_\nu(x) = \frac{\prod_{-q+1\le k\le q-1,\,k\ne\nu}(x-x_{j+k})}{\prod_{-q+1\le k\le q-1,\,k\ne\nu}(x_{j+\nu}-x_{j+k})}\;,\quad -q\le\nu\le q\;,
  \end{displaymath}
  where $\hat{l}_\nu,\,-q+1\le\nu\le q-1$ are precisely the Lagrangian interpolation polynomials corresponding to the points $\{x_{j+k}:\;-q+1\le\nu\le q-1\}$.
  Then, the HV-coefficients are again given by~\cref{eq:prelim_ddo_dxx_eqcoef_a} and~\cref{eq:prelim_ddo_dxx_eqcoef_b}, except that the range of $\nu$ for the latter is $-q+1\le\nu\le q-1$.
  This completes the proof of the existence of a unique $[\mathcal{D}_{xx}^c]$ that is at least $(4q-3)^\powth\textrm{\sc-}$order accurate; and this operator is actually at least $(4q-2)^\powth\textrm{\sc-}$order due to the symmetry of the HV-coefficients.
  The calculation of $\mathcal{a}$'s and $\mathcal{b}$'s are completely parallel and omitted here.
\end{proof}

At last, we remark that the methodology applies equally to non-uniform grids. 
In this case, the existence and uniqueness results of~\cref{thm:prelim_ddo_dx} and~\cref{cor:prelim_ddo_dx_c} remains true but the coefficients take different values.
For~\cref{thm:prelim_ddo_dxx}, however, the least order of accuracy one may establish is $4q-1$ instead of $4q$ because one typically loses the symmetry of $\mathcal{a}$ and $\mathcal{b}$ coefficients when the grid is not uniform.

\section{Accuracy Analysis}
\label{sec:acry}
The purpose of this section is to establish the relation between the local truncation errors ($P_1$, $P_2$, and $P_3$ as given in~\cref{def:prelim_hv_ddo}) and the actual spatial order of the semi-discretization~\cref{eq:prelim_hv_semi}.
%study how well the semi-discretized solutions to~\cref{eq:prelim_hv_semi} approximate the exact cell-averaged and nodal solutions, where the HV-DDOs are given by the general form~\cref{eq:prelim_hv_dx}--\cref{eq:prelim_hv_dxx_c}.
It is important to realize that for HV methods, the relation between the order of the DDO and the spatial order of the method is typically different from that of conventional finite difference or finite volume methods.
For example, in the case of advection equations (i.e., $\nu=0$) one has the following {\it supraconvergence} result:
\begin{theorem}[{\cite[Theorem 2]{XZeng:2019a}}]\label{thm:acry_dx}
  Suppose a $P^\powth\textrm{\sc-}$order HV-DDO $[\mathcal{D}_x]$ is used to discretize the advection equation $w_t+cw_x=0,\; c\ne0$, such that $P\ge1$ and $b_0\eqdef\beta_{-l'}+\cdots+\beta_{r'}\ne0$.
  Then the semi-discretized HV method is $(P+1)^\powth\textrm{\sc-}$order accurate provided also that $cb_0>0$.
\end{theorem}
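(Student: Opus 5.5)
We plan a Fourier (symbol) analysis of the semi-discretization for the advection equation $w_t+cw_x=0$. Insert the ansatz $w_j(t)=\hat w(t)e^{i\kappa x_j}$ and $\overline{w}_{\phf{j}}(t)=\hat{\overline w}(t)e^{i\kappa x_j}$, and write $\theta=\kappa h$. The scheme then reduces to a $2\times2$ linear ODE system $\frac{d}{dt}(\hat{\overline w},\hat w)^T=-\frac{c}{h}A(\theta)(\hat{\overline w},\hat w)^T$. The cell equation gives the first row $(0,\,e^{i\theta}-1)$. The nodal operator gives the second row $(\sum_k\alpha_ke^{ik\theta},\,\sum_k\beta_ke^{ik\theta})$.

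The exact simple wave $e^{i\kappa(x-ct)}$ corresponds to the vector $v(\theta)=\bigl(\frac{e^{i\theta}-1}{i\theta},1\bigr)$ with eigenvalue $\lambda_{ex}=i\theta$ (after scaling by $c/h$). The $P$-exactness in \cref{rm:prelim_hv_poly}, applied to exponentials, shows that $A(\theta)v(\theta)=i\theta v(\theta)+O(\theta^{P+2})e_2$. The first row is exact, and the second row has error $h\cdot O(h^{P}\kappa^{P+1})$.

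The core of the plan is eigenvalue perturbation. At $\theta=0$ the matrix $A(0)$ has rows $(0,0)$ and $(\sum\alpha_k,b_0)$. Consistency forces $\sum\alpha_k=-b_0$, so the eigenvalues are $0$ and $b_0\neq0$.

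1. The \emph{physical} eigenvalue $\lambda_1(\theta)$ is the smooth branch through $0$. We use the left eigenvector $u(\theta)$ of that branch, normalized so that $u\cdot v=1$. Since $A$ is smooth in $\theta$, $u(\theta)=u(0)+O(\theta)$ with $u(0)\propto(b_0,\ \cdot)$. The key point is the residual. Writing $A v - i\theta v=r$ with $r=O(\theta^{P+2})e_2$, we get $\lambda_1-i\theta=u\cdot r/(u\cdot v)$.
2. For the left eigenvector of $\lambda=0$ at $\theta=0$, we solve $u A(0)=0$. This gives $u_2 b_0=0$, so $u_2(0)=0$. Hence $u_2(\theta)=O(\theta)$, and $u\cdot r=O(\theta^{P+3})$. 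Thus $\lambda_1=i\theta+O(\theta^{P+3})$, which is one order better than the residual. This gain is the supraconvergence mechanism.
3. The \emph{auxiliary} eigenvalue satisfies $\lambda_2(\theta)=b_0+O(\theta)$. In the actual ODE this gives the factor $e^{-\frac{c}{h}\lambda_2t}$. Because $cb_0>0$, the real part of $\frac{c}{h}\lambda_2$ is at least of size $\frac{c b_0}{2h}$ for small $\theta$. The mode therefore decays like $e^{-Ct/h}$, which is exponentially small after an initial layer.

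To conclude, decompose the exact initial data $(\hat{\overline w}^\star,\hat w^\star)=v(\theta)$ along the two eigenvectors. The projection onto the auxiliary mode is $O(\theta^{P+2})$, since $v$ differs from the physical eigenvector by $O(\theta^{P+2})$ (use $r$ and a spectral gap of size $|b_0|$). The physical part evolves with phase error $\frac{c}{h}t\,O(\theta^{P+3})=O(t\kappa^{P+3}h^{P+2}/h)$. This is $O(h^{P+1})$ per fixed $\kappa$. Combined with the decaying auxiliary mode, the error is $O(h^{P+1})$ for smooth data, obtained by summing over Fourier modes with sufficient regularity.

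I expect the main obstacle to be uniformity. The expansions are asymptotic in small $\theta$, while well-posedness for large $\theta$ needs some stability. We will rely on smooth solutions so that high-frequency modes have negligible amplitude. We will also handle the bound on eigenvector conditioning uniformly near $\theta=0$, using the gap $|b_0|$.
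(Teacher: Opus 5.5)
Your left-eigenvector strategy is sound, but the power counting is off by one in two places, and the two slips cancel.

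\textbf{The residual.} It is not $O(\theta^{P+2})$. The second component of $A(\theta)v(\theta)$ is $h\,e^{-ij\theta}[\mathcal{D}_xw^\star]_j$ for $w=e^{i\kappa x}$. By \cref{def:prelim_hv_ddo} this equals $i\theta\,(1+c_1(i\theta)^P+O(\theta^{P+1}))$. Hence $r=c_1i^{P+1}\theta^{P+1}e_2+O(\theta^{P+2})$ with $c_1\neq0$, which is exactly what your own parenthetical $h\cdot O(h^P\kappa^{P+1})$ says.

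\textbf{Consequences.} The left null vector of $A(0)$ is $u(0)=(1,0)$, and $u_2=\lambda_1/G(e^{i\theta})\approx -i\theta/b_0$. Your identity therefore gives $\lambda_1-i\theta\approx c_1i^P\theta^{P+2}/b_0$, i.e.\ $\lambda_1=i\theta+O(\theta^{P+2})$, and this is sharp. It is not $O(\theta^{P+3})$. Likewise, the auxiliary component of $v$ is $O(\theta^{P+1})$, not $O(\theta^{P+2})$.

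\textbf{The compensating slip.} In your last step, $\frac{c}{h}\,O(\theta^{P+3})=O(\kappa^{P+3}h^{P+2})$, not $O(\kappa^{P+3}h^{P+1})$. Taken literally, your intermediate claims would prove order $P+2$. Worse, under your stated residual the naive bound $|\lambda_1-i\theta|=O(|r|)$ would already give order $P+1$, so the gain in your step 2 would be doing nothing.

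\textbf{Corrected bookkeeping.} With the residual fixed, everything is consistent and sharp. The naive bound gives $\frac{c}{h}O(\theta^{P+1})=O(h^P)$. The factor $u_2=O(\theta)$ lifts this to $\frac{c}{h}O(\theta^{P+2})=O(h^{P+1})$. The auxiliary mode contributes $O(\theta^{P+1})=O(h^{P+1})$ at $t=0$ and is then damped because $cb_0>0$.

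\textbf{Comparison with the paper.} After this fix, your route differs genuinely from the paper's argument (given for $\nu\neq0$ in \cref{sec:acry}, extending \cite{XZeng:2019a}). There, the eigenvalues come from the quadratic formula, and a branch of $\sqrt{\Delta}$ is chosen by matching asymptotic orders using $\beta=O(\theta^{-1})$. The amplitudes $A(t)$, $N(t)$ of \cref{eq:acry_semi_sol} are then written out from an explicit eigen-decomposition. Your identity $(\lambda_1-i\theta)\,u\cdot v=u\cdot r$ with $u(0)=(1,0)$ avoids the discriminant entirely. It also shows structurally where the extra order comes from: at $\theta=0$ the physical left eigenvector sees only the cell-average equation, which the exact solution satisfies exactly. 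Both arguments rely on the gap $b_0\neq0$ and on $cb_0>0$.

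\textbf{On uniformity.} The order in this statement is the per-wavenumber asymptotic as $\theta\to0$, so the uniformity issue you raise is not needed here. If you do want a genuine convergence theorem, smoothness of the data cannot substitute for a stability bound on the modes with $\theta$ bounded away from $0$.
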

The condition $cb_0>0$ is known as the {\it generalized upwind condition} as by~\cref{thm:prelim_ddo_dx} it is satisfied if and only if there are more upwind variables than the downwind ones in~\cref{eq:prelim_hv_dx}, see also~\cite[Proposition 5]{XZeng:2019a}.

%The situation is certainly more complicate with the advection-diffusion equations.
To extend the analysis to advection-diffusion equations, 
we assume a simple wave~\cref{eq:prelim_prop_fr_simp} and compare it to the solutions to the ODE system~\cref{eq:prelim_hv_semi}.
To this end, the exact nodal and cell-averaged solutions corresponding to~\cref{eq:prelim_prop_fr_simp} are:
\begin{equation}\label{eq:acry_exact}
  w_j^\star(t) = e^{-ic\kappa t-\nu\kappa^2 t}e^{ij\theta}\;,\quad
  \overline{w}_{\phf{j}}^\star(t) = \frac{1}{i\theta}e^{-ic\kappa t-\nu\kappa^2 t}e^{ij\theta}\left(e^{i\theta}-1\right)\;,
\end{equation}
where $\theta\eqdef\kappa h$ is the {\it numerical wavenumber}.

To find the solution to the semi-discretized system, 
%Next, we focus on extending the analysis to general case; the method is the same but the calculation is considerably more complex. 
let us consider an HV-discretization with a $P_1^\powth\textrm{\sc-}$order $[\mathcal{D}_x]$, a $P_2^\powth\textrm{\sc-}$order $[\mathcal{D}_x^c]$, and a $P_3^\powth\textrm{\sc-}$order $[\mathcal{D}_{xx}^c]$, whose coefficients are given by~\cref{eq:prelim_hv_dx}, \cref{eq:prelim_hv_dx_c}, and~\cref{eq:prelim_hv_dxx_c}, respectively.
To this end, it is not difficult to see that the solutions to the ODE system~\cref{eq:prelim_hv_semi} are given by:
%~\cref{eq:acry_semi_sol}, where $A(t)$ and $N(t)$ solves the ODEs:
\begin{equation}\label{eq:acry_semi_sol}
  \overline{w}_{\phf{j}}(t) = \frac{1}{i\theta}A(t)e^{ij\theta}(e^{i\theta}-1)\;,\quad
  w_j(t) = N(t)e^{ij\theta}\;,
\end{equation}
where the scalar functions $A(t)$ and $N(t)$ solve the ODE system:
\begin{equation}\label{eq:acry_semi_ode}
  \left\{\begin{array}{l}
    A'(t) + \nu\kappa^2\alpha^c(\theta)A(t) + \left(ic\kappa+\nu\kappa^2\beta^c(\theta)\right)N(t) = 0\;, \\ \vspace*{-.1in} \\
    N'(t) + \left(ic\kappa\alpha(\theta)+\nu\kappa^2\mathcal{a}(\theta)\right)A(t) + 
            \left(ic\kappa\beta(\theta)+\nu\kappa^2\mathcal{b}(\theta)\right)N(t) = 0\;, \\ \vspace*{-.1in} \\
    A(0) = N(0) = 1\;.
  \end{array}\right.
\end{equation}
where the coefficient functions are:
\begin{align}
  \label{eq:acry_semi_fun_dx_c}
  &\alpha^c(\theta) = \frac{1}{(i\theta)^2}\sum_{k=-p}^{p-1}\alpha_k^ce^{ik\theta}(e^{i\theta}-1)\;,\quad
  &&\beta^c(\theta) = \frac{1}{i\theta}\sum_{k=-p'}^{p'}\beta_k^ce^{ik\theta}\;; \\
  \label{eq:acry_semi_fun_dx}
  &\alpha(\theta) = \frac{1}{(i\theta)^2}\sum_{k=-l}^{r-1}\alpha_ke^{ik\theta}(e^{i\theta}-1)\;,\quad
  &&\beta(\theta) = \frac{1}{i\theta}\sum_{k=-l'}^{r'}\beta_ke^{ik\theta}\;; \\
  \label{eq:acry_semi_fun_dxx_c}
  &\mathcal{a}(\theta) = \frac{1}{(i\theta)^3}\sum_{k=-q}^{q-1}\mathcal{a}_ke^{ik\theta}(e^{i\theta}-1)\;,\quad
  &&\mathcal{b}(\theta) = \frac{1}{(i\theta)^2}\sum_{k=-q'}^{q'}\mathcal{b}_ke^{ik\theta}\;.
\end{align}
These functions are tightly related to the order conditions.
For example, because $[\mathcal{D}_x]$ is $P_1^\powth\textrm{\sc-}$order accurate, plugging in the simple wave solution to~\cref{eq:prelim_hv_dx_ddo} we have for some constant $c_1\ne0$ that is independent of $\theta$:
\begin{displaymath}
  \frac{1}{h}\sum_{k=-l}^{r-1}\alpha_k\frac{1}{i\theta}e^{i(j+k)\theta}(e^{i\theta}-1)+\frac{1}{h}\sum_{k=-l'}^{r'}\beta_ke^{i(j+k)\theta} = i\kappa e^{ij\theta}(1+c_1\theta^{P_1} + O(\theta^{P_1+1}))\;,
\end{displaymath}
or equivalently:
\begin{equation}\label{eq:acry_semi_approx_dx}
  \alpha(\theta) + \beta(\theta) = 1+c_1\theta^{P_1}+O(\theta^{P_1+1})\;.
\end{equation}
Similarly, there exist non-zero constants $c_2$ and $c_3$ that are independent of $\theta$, such that:
\begin{equation}\label{eq:acry_semi_approx_dx_c}
  \alpha^c(\theta) + \beta^c(\theta) = 1+c_2\theta^{P_2}+O(\theta^{P_2+2})\;,
\end{equation}
and
\begin{equation}\label{eq:acry_semi_approx_dxx_c}
  \mathcal{a}(\theta) + \mathcal{b}(\theta) = 1+c_3\theta^{P_3}+O(\theta^{P_3+2})\;,
\end{equation}
where again the $+2$ in high-order terms is due to central differencing.
To this end, we denote the coefficient matrix corresponding to~\cref{eq:acry_semi_ode} by $\bb{C}(\theta)$:
\begin{equation}\label{eq:acry_semi_mat}
  \bb{C}(\theta) = \left[\begin{array}{cc}
    \nu\kappa^2(1\!-\!\beta^c\!+\!c_2\theta^{P_2}\!+\!O(\theta^{P_2\!+\!2})) & ic\kappa+\nu\kappa^2\beta^c \\ \vspace*{-.1in} \\
    ic\kappa(1\!-\!\beta\!+\!c_1\theta^{P_1}\!+\!O(\theta^{P_1\!+\!1})) + \nu\kappa^2(1\!-\!\mathcal{b}\!+\!c_3\theta^{P_3}\!+\!O(\theta^{P_3\!+\!2})) &
    ic\kappa\beta+\nu\kappa^2\mathcal{b}
  \end{array}\right]\;,
\end{equation}
where we suppressed the dependence on $\theta$ of the coefficient functions for simplicity.
The two eigenvalues of~\cref{eq:acry_semi_mat} are:
\begin{equation}\label{eq:acry_semi_eigs}
  \lambda_{1,2} = \frac{1}{2}\nu\kappa^2(1-\beta^c+\mathcal{b}+c_2\theta^{P_2}) + \frac{1}{2}ic\kappa\beta + O(\theta^{P_2+2})\pm\sqrt{\Delta}\;,
\end{equation}
where:
\begin{align*}
  \Delta 
  =& \left(\frac{1}{2}\nu\kappa^2(1-\beta^c+\mathcal{b}+c_2\theta^{P_2}+O(\theta^{P_2+2}))+\frac{1}{2}ic\kappa\beta\right)^2 - \\
   & \nu\kappa^2(1-\beta^c+c_2\theta^{P_2}+O(\theta^{P_2+2}))(ic\kappa\beta+\nu\kappa^2\mathcal{b}) + \\
   & (ic\kappa+\nu\kappa^2\beta^c)\left(ic\kappa(1-\beta+c_1\theta^{P_1}+O(\theta^{P_1+1}))+\nu\kappa^2(1-\mathcal{b}+c_3\theta^{P_3}+O(\theta^{P_3+2}))\right)\;.
\end{align*}
To have a better understanding of the high-order terms, we need the asymptotic behavior of the coefficient functions as $\theta\to0$.
Because $b_0 = \lim_{\theta\to0}i\theta\beta(\theta) = \sum_{k=-l'}^{r'}\beta_k<\infty$, we have $\beta = O(\theta^{-1})$ (in fact $\beta=O(1)$ if $[\mathcal{D}_x]$ is central, but it does not affect the subsequent estimate).
Similarly, from the second part of~\cref{thm:prelim_ddo_dxx} we have $B_0=\lim_{\theta\to0}(i\theta)^2\mathcal{b}(\theta) \ne0$, hence $\mathcal{b} = -\frac{B_0}{\theta^2} + O(\theta^{-1}) = O(\theta^{-2})$.

The central difference $[\mathcal{D}_x^c]$ given by~\cref{cor:prelim_ddo_dx_c} always gives $\sum_{k=-p'}^{p'}\beta_k^c=0$ since $\beta_{-k}^c+\beta_k^c=0,\ \forall -p'\le k\le p'$; hence $\lim_{\theta\to0}i\theta\beta^c(\theta) = 0$.
Using the coefficients~\cref{eq:prelim_ddo_dx_c_beta_z} and~\cref{eq:prelim_ddo_dx_c_beta_nz} we can compute:
\begin{align*}
  \lim_{\theta\to0}\beta^c(\theta) 
  &= \lim_{\theta\to0}\frac{1}{i\theta}\sum_{k=1}^{p'}(\beta_k^ce^{ik\theta}+\beta_{-k}^ce^{-ik\theta}) 
   = \lim_{\theta\to0}\frac{1}{i\theta}\sum_{k=1}^{p'}\left(-\frac{2}{k}C_k^{p,p}C_k^{p',p'}\right)(e^{ik\theta}-e^{-ik\theta}) \\
  &= \sum_{k=1}^{p'}\left(-4\frac{p!p!}{(p+k)!(p-k)!}\frac{p'!p'!}{(p'+k)!(p'-k)!}\right) < 0\;,
\end{align*}
hence $\beta^c = b_0^c +O(\theta)$, where $b_0^c \eqdef \lim_{\theta\to0}\beta^c(\theta) < 0$.

Using these estimates, we have:
\begin{align*}
  \Delta 
  =& \left[\left(\frac{1}{2}\nu\kappa^2(1\!+\!\beta^c\!-\!\mathcal{b}\!+\!c_2\theta^{P_2})+ic\kappa(1\!-\!\frac{1}{2}\beta)\right) +
           \left(\nu\kappa^2(-\!\beta^c\!+\!\mathcal{b})+ic\kappa(-\!1\!+\!\beta)\right)\right]^2 - \\
   & \nu\kappa^2(1-\beta^c+c_2\theta^{P_2})(ic\kappa\beta+\nu\kappa^2\mathcal{b}) + (ic\kappa+\nu\kappa^2\beta^c)\left(ic\kappa(1-\beta+c_1\theta^{P_1})+\nu\kappa^2(1-\mathcal{b}+c_3\theta^{P_3})\right) + \\
   & O(\theta^{P_2}+c\theta^{P_1+1}+\theta^{P_3+2}) \\
  =& \left(\frac{1}{2}\nu\kappa^2(1\!+\!\beta^c\!-\!\mathcal{b}\!+\!c_2\theta^{P_2})+ic\kappa(1\!-\!\frac{1}{2}\beta)\right)^2 + (\nu\kappa^2)^2\left(-\beta^cc_2\theta^{P_2}+\beta^cc_3\theta^{P_3}\right) + (ic\kappa)^2c_1\theta^{P_1} + \\
   & (ic\kappa)\nu\kappa^2\left(\beta^cc_1\theta^{P_1}-c_2\theta^{P_2}+c_3\theta^{P_3}\right) + O(\theta^{P_2}+c\theta^{P_1+1}+\theta^{P_3+2}) \\
  =& \left(\frac{1}{2}\nu\kappa^2(1\!+\!\beta^c\!-\!\mathcal{b}\!+\!c_2\theta^{P_2})+ic\kappa(1\!-\!\frac{1}{2}\beta)\right)^2 + O(\theta^{\min(P_2,P_3)}+c\theta^{P_1})\;.
\end{align*}
Now we show that we may choose a branch of the square root of $\Delta$, such that:
%Similar as before (and mainly due to $\mathcal{b}=O(\theta^{-2})$), we may choose a branch:
\begin{equation}\label{eq:acry_semi_branch}
  \sqrt{\Delta} = \frac{1}{2}\nu\kappa^2(1+\beta^c-\mathcal{b}+c_2\theta^{P_2})+ic\kappa(1-\frac{1}{2}\beta)+O(\theta^{\min(P_2,P_3)+2}+c\theta^{P_1+2})\;.
\end{equation}
Indeed, realizing $\tilde{\lambda}\eqdef\frac{1}{2}\nu\kappa^2(1\!+\!\beta^c\!-\!\mathcal{b}\!+\!c_2\theta^{P_2})+ic\kappa(1\!-\!\frac{1}{2}\beta)=O(\theta^{-2})$ using previous calculations, we may write:
\begin{displaymath}
  \Delta = (\tilde{\lambda}+O(\theta^P))^2
\end{displaymath}
for some power $P$. 
Comparing it to the previous formula for $\Delta$, we obtain:
\begin{displaymath}
  O(\theta^{\min(P_2,P_3)}+c\theta^{P_1}) = 2\tilde{\lambda}\,O(\theta^P) + O(\theta^{2P}) = O(\theta^{P-2}) + O(\theta^{2P})\;.
\end{displaymath}
Therefore one must have $\min(P-2,2P)\ge\min(P_2,P_3)$ if $c=0$ and $\min(P-2,2P)\ge\min(P_1,P_2,P_3)$ if $c\ne0$, which proves~\cref{eq:acry_semi_branch}.
With this choice, we have $\lambda_1\approx ic\kappa+\nu\kappa^2$ as $\theta\to0$; moreover:
\begin{align}
  \label{eq:acry_semi_eig_1}
  \lambda_1 &= ic\kappa + \nu\kappa^2(1+c_2\theta^{P_2}) + O(\theta^{\min(P_2,P_3)+2}+c\theta^{P_1+2})\;, \\
  \label{eq:acry_semi_eig_2}
  \lambda_2 &= ic\kappa(-1+\beta) + \nu\kappa^2(-\beta^c+\mathcal{b}) + O(\theta^{\min(P_2,P_3)+2}+c\theta^{P_1+2}) = -\frac{B_0\nu\kappa^2}{\theta^2} + O(\theta^{-1})\;.
\end{align}
Then we show that both $A(t)$ and $N(t)$ are $O(\theta^{\min(P_2,P_3+2)}+c\theta^{P_1+2})$-approximations to the exact value $e^{-ic\kappa t-\nu\kappa^2t}$.
Using the eigenvalue decomposition of $\bb{C}(\theta)$:
\begin{align*}
  \bb{C}(\theta) &= \left[\begin{array}{cc}
    \nu\kappa^2\alpha^c & ic\kappa+\nu\kappa\beta^c \\
    ic\kappa\alpha+\nu\kappa^2\mathcal{a} & ic\kappa\beta+\nu\kappa^2\mathcal{b}
  \end{array}\right] \\
  &= \left[\begin{array}{cc}
    ic\kappa+\nu\kappa^2\beta^c & ic\kappa+\nu\kappa^2\beta^c \\
    \lambda_1-\nu\kappa^2\alpha^c & \lambda_2-\nu\kappa^2\alpha^c
  \end{array}\right]
  \left[\begin{array}{cc}
    \lambda_1 & 0 \\ 0 & \lambda_2
  \end{array}\right]
  \left[\begin{array}{cc}
    ic\kappa+\nu\kappa^2\beta^c & ic\kappa+\nu\kappa^2\beta^c \\
    \lambda_1-\nu\kappa^2\alpha^c & \lambda_2-\nu\kappa^2\alpha^c
  \end{array}\right]^{-1}\;,
\end{align*}
it follows that:
\begin{align*}
  A(t) &= \frac{\lambda_2-ic\kappa-\nu\kappa^2(\alpha^c+\beta^c)}{\lambda_2-\lambda_1}e^{-\lambda_1t}-
          \frac{\lambda_1-ic\kappa-\nu\kappa^2(\alpha^c+\beta^c)}{\lambda_2-\lambda_1}e^{-\lambda_2t}\;, \\
  N(t) &= \frac{(\lambda_1\!-\!\nu\kappa\alpha^c)(\lambda_2\!-\!ic\kappa\!-\!\nu\kappa^2(\alpha^c\!+\!\beta^c))}{(ic\kappa+\nu\kappa^2\beta^c)(\lambda_2-\lambda_1)}e^{-\lambda_1t}-
          \frac{(\lambda_2\!-\!\nu\kappa\alpha^c)(\lambda_1\!-\!ic\kappa\!-\!\nu\kappa^2(\alpha^c\!+\!\beta^c))}{(ic\kappa+\nu\kappa^2\beta^c)(\lambda_2-\lambda_1)}e^{-\lambda_2t}\;.
\end{align*}
From~\cref{eq:acry_semi_approx_dx_c} and~\cref{eq:acry_semi_eig_1} we obtain:
\begin{displaymath}
  \lambda_1-ic\kappa-\nu\kappa^2(\alpha^c+\beta^c) = O(\theta^{\min(P_2,P_3)+2}+c\theta^{P_1+2})\,;
\end{displaymath}
furthermore, since $\lambda_2=-B_0\nu\kappa^2/\theta^2+O(\theta^{-1})$, there is $\lambda_2-\lambda_1 = -B_0\nu\kappa^2/\theta^2+O(\theta^{-1})$.
Thus:
\begin{align*}
  A(t) 
  =& (1+O(\theta^{\min(P_2,P_3)+4}+c\theta^{P_1+4}))e^{-(ic\kappa+\nu\kappa^2+O(\theta^{\min(P_2,P_3+2)}+c\theta^{P_1+2}))t} - \\
   & O(\theta^{\min(P_2,P_3)+4}+c\theta^{P_1+4})e^{\frac{B_0\nu t}{h^2}+O(\theta^{-1})t}\;,
\end{align*}
and providing $B_0<0$, the second term decays exponentially fast to zero as $h\to0$ and:
\begin{equation}\label{eq:acry_semi_amp_a}
  A(t) = e^{-ic\kappa t-\nu\kappa^2t}(1+O(\theta^{\min(P_2,P_3+2)}+c\theta^{P_1+2}))\;.
\end{equation}
A similar conclusion is obtained for $N(t)$:
\begin{equation}\label{eq:acry_semi_amp_n}
  N(t) = e^{-ic\kappa t-\nu\kappa^2t}(1+O(\theta^{\min(P_2,P_3+2)}+c\theta^{P_1+2}))
\end{equation}
with the help of the estimate:
\begin{displaymath}
  \frac{\lambda_1-\nu\kappa\alpha^c}{ic\kappa+\nu\kappa^2\beta^c} = 1 + \frac{\lambda_1-ic\kappa-\nu\kappa^2(\alpha^c+\beta^c)}{ic\kappa+\nu\kappa^2\beta^c} = 1 + O(\theta^{\min(P_2,P_3)+2}+c\theta^{P_1+2})\;.
\end{displaymath}

Summarizing the proof, we have the following main theorem regarding the accuracy of the HV-method for advection-diffusion equations.
\begin{theorem}\label{thm:acry}
  Given a $P_1^\powth\textrm{\sc-}$order $[\mathcal{D}_x]$, a $P_2^\powth\textrm{\sc-}$order $[\mathcal{D}_x^c]$ such that $\lim_{\theta\to0}i\theta\beta^c(\theta)=0$, and a $P_3^\powth\textrm{\sc-}$order $[\mathcal{D}_{xx}^c]$ such that $B_0=\lim_{\theta\to0}(i\theta)^2\mathcal{b}(\theta) = \sum_{k=-q'}^{q'}\mathcal{b}_k < 0$, the corresponding spatial order of accuracy of the HV method~\cref{eq:prelim_hv_semi} for the semi-discretization of~\cref{eq:prelim_eqn} with $\nu\ne0$ is $\min(P_1+2,P_2,P_3+2)$.
  Especially if $c=0$, the order of accuracy is $\min(P_2,P_3+2)$.
\end{theorem}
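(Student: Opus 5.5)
The plan is to run a single-Fourier-mode analysis along the lines of the computation just preceding the statement. We substitute the simple wave \cref{eq:prelim_prop_fr_simp} and reduce the semi-discretization \cref{eq:prelim_hv_semi} to the $2\times2$ system \cref{eq:acry_semi_ode} for the amplitudes $A(t)$ and $N(t)$. The spatial order is then read off as the largest power $P$ such that $A(t)$ and $N(t)$ equal $e^{-ic\kappa t-\nu\kappa^2t}(1+O(\theta^P))$ uniformly for $t\in[0,T]$. For a smooth periodic solution, this statement about each mode transfers to the full solution through Parseval's identity, because the Fourier coefficients decay rapidly.

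\textbf{Symbol asymptotics.} The first step collects the behavior of the coefficient functions as $\theta\to0$. The order conditions give \cref{eq:acry_semi_approx_dx}--\cref{eq:acry_semi_approx_dxx_c}. The hypothesis $\lim i\theta\beta^c=0$ gives $\beta^c=b^c_0+O(\theta)$. The hypothesis $B_0<0$ gives $\mathcal{b}=-B_0/\theta^2+O(\theta^{-1})$. Finally, $\beta=O(\theta^{-1})$.

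\textbf{Eigenvalues.} Next we expand the discriminant of $\bb{C}(\theta)$ and choose the branch \cref{eq:acry_semi_branch}. This is justified because $\tilde\lambda=O(\theta^{-2})$: a perturbation of size $O(\theta^{m})$ in $\Delta$ changes $\sqrt\Delta$ by only $O(\theta^{m+2})$. The result is \cref{eq:acry_semi_eig_1} and \cref{eq:acry_semi_eig_2}. The physical eigenvalue $\lambda_1$ matches $ic\kappa+\nu\kappa^2$ up to the stated error. The parasitic eigenvalue satisfies $\lambda_2\sim -B_0\nu\kappa^2/\theta^2$, with positive real part of order $h^{-2}$.

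\textbf{Explicit amplitudes.} We then diagonalize $\bb{C}$ and write $A$ and $N$ explicitly. The coefficient of $e^{-\lambda_2t}$ is $\bigl(\lambda_1-ic\kappa-\nu\kappa^2(\alpha^c+\beta^c)\bigr)/(\lambda_2-\lambda_1)$. Its numerator is small by \cref{eq:acry_semi_approx_dx_c}, and its denominator is of size $\theta^{-2}$. Because $B_0<0$, the factor $e^{-\lambda_2t}$ decays like $e^{B_0\nu t/h^2}$, so the parasitic term is negligible for $t>0$ and is bounded by the small coefficient at $t=0$. In the physical term, the exponent error $\lambda_1-ic\kappa-\nu\kappa^2$ is multiplied by $t$. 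It contributes the relative error $\nu\kappa^2c_2\theta^{P_2}$, which is $O(\theta^{P_2})$ for fixed $\kappa$, plus $O(\theta^{\min(P_2,P_3)+2}+c\theta^{P_1+2})$. Hence the total error is $O(\theta^{\min(P_2,P_3+2)}+c\theta^{P_1+2})$, which proves the upper bound on the error.

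\textbf{Sharpness.} To show the order is exactly $\min(P_1+2,P_2,P_3+2)$, we track the leading nonzero coefficient in $\lambda_1$. It is $\nu\kappa^2c_2\theta^{P_2}$, or the $\theta^{P_3+2}$ term coming from $\beta^cc_3\theta^{P_3}$ divided by $\tilde\lambda$, or the $c\theta^{P_1+2}$ term from $c_1$. We must show that whichever of these is dominant does not cancel. When two of them have equal order and could cancel, we argue that their coefficients depend differently on the free parameters $c$ and $\nu$: the $c_1$ term scales like $c^2/\nu$ relative to the others. For generic parameters, or simply for $c=0$ in the second claim, no cancellation occurs.

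\textbf{Main obstacle.} I expect the hardest step to be the bookkeeping inside $\sqrt\Delta$. One must verify that the cross terms involving $\beta=O(\theta^{-1})$ and $\mathcal{b}=O(\theta^{-2})$ really produce an error of size $\theta^{P_3+2}$, and not $\theta^{P_3}$, in $\lambda_1$. I would handle this by computing $\lambda_1=\det\bb{C}/\lambda_2$ directly instead of expanding the square root. The determinant is explicit in terms of the order conditions, and $\lambda_2$ is of order $\theta^{-2}$, so this exposes the two-order gain at once.
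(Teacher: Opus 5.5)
Your core argument matches the paper's proof: reduce to the $2\times2$ symbol system, show $\lambda_2\approx -B_0\nu\kappa^2/\theta^2$, get the two-order gain in $\lambda_1$ by dividing the $O(\theta^{P_3}+c\theta^{P_1})$ residual by that $O(\theta^{-2})$ gap, and suppress the parasitic mode using $B_0<0$. It yields the same single-mode bound $O(\theta^{\min(P_2,P_3+2)}+c\theta^{P_1+2})$, which is all the paper establishes.

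Your $\det\bb{C}/\lambda_2$ device works, but not ``at once''. Knowing $\lambda_2$ only to leading order fixes $\lambda_1$ only to $O(\theta)$, so you must also use the exact relation $\lambda_2=\mathrm{tr}\,\bb{C}-\lambda_1$. The clean form is to evaluate the characteristic polynomial at the row sum $X=ic\kappa+\nu\kappa^2(\alpha^c+\beta^c)$, which gives exactly
\[
  \lambda_1-X=\frac{(ic\kappa+\nu\kappa^2\beta^c)(r_2-r_1)}{X-\lambda_2}\;,\qquad
  r_1=\nu\kappa^2(\alpha^c+\beta^c-1)\;,\quad
  r_2=ic\kappa(\alpha+\beta-1)+\nu\kappa^2(\mathcal{a}+\mathcal{b}-1)\;.
\]

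Two of your additions go beyond the paper and are not justified as written.

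\textbf{Sharpness.} Take $c=0$ and $P_2=P_3+2$. The identity gives
\[
  \lambda_1-\nu\kappa^2=\nu\kappa^2\bigl(c_2+b_0^cc_3/B_0\bigr)\theta^{P_2}+\cdots\;,
\]
so both competing terms carry the same factor $\nu\kappa^2\theta^{P_2}$. No scaling in $c$, $\nu$ or $\kappa$ separates them. Your claim ``for $c=0$ no cancellation occurs'' therefore requires showing $c_2+b_0^cc_3/B_0\neq0$ from the actual operator coefficients. For example, $[\mathcal{D}_x^{(c-4)}]$ with $[\mathcal{D}_{xx}^{(c-2)}]$ gives $-\frac{1}{360}-\frac{1}{120}=-\frac{1}{90}$, but in general you need a sign argument on the error constants. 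When $c\neq0$ and $P_2=P_3+2<P_1+2$, the tie is broken by the purely imaginary term $ic\kappa c_3/B_0$, not by the $c^2/\nu$ scaling you cite. The paper itself proves only the upper bound and relies on the numerics for sharpness.

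\textbf{Parseval transfer.} Passing from one Fourier mode to the full solution needs $e^{-t\bb{C}(\theta)}$ to be bounded uniformly for all $|\theta|\le\pi$, i.e.\ stability of the semi-discretization. The $\theta\to0$ asymptotics say nothing about high wavenumbers. This stability is not a hypothesis of the theorem, and the paper proves it only for central schemes. Either add that hypothesis or state the result, as the paper does, at the single-mode level.
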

\begin{remark}\label{rm:acry_compact}
  The proof does not restrict to HV-DDOs with continuous stencils, as long as stated conditions on their orders and coefficients are satisfied. 
  Nevertheless, these conditions are automatically satisfied by continuous-stencil operators, see~\cref{sec:prelim_hv}.
\end{remark}
\begin{remark}\label{rm:acry_upw}
  In either~\cref{thm:acry_dx} or~\cref{thm:acry}, a coefficient condition ($cb_0>0$ and $B_0<0$, respectively) is required to ensure the exponentially fast decay of the second mode $e^{-\lambda_2t}$ -- note that in the case of the advection-diffusion equation, the generalized upwind condition $cb_0>0$ for advection term is no longer required as the term $e^{-\frac{B_0}{h^2}}$ is now dominating. %($\lim_{\theta\to0}i\theta\beta(\theta)\ne0$ is equivalent to $b_0\ne0$).
\end{remark}
%\begin{remark}\label{rm:acry_supra}
%  If $P_2$ and $P_3$ are sufficiently large, the order of accuracy of the method is $P_1+2$, {\it two-order higher} than the corresponding HV-DDO $[\mathcal{D}_x]$!
%  In comparison, the supraconvergence of~\cref{thm:acry_dx} indicates one-order high accuracy for the advection term.
%  %This supraconvergence result is even more prominant than that of~\cref{thm:acry_dx}, where only {\it one-order higher accuracy} is guaranteed.
%\end{remark}
\begin{remark}\label{rm:acry_choice}
  Although the proof indicates that the generalized upwind condition is no longer required for $[\mathcal{D}_x]$ when $\nu\ne0$, it is still preferable to chose DDOs with $cb_0>0$ in practice because the advection term remains the dominant one in the limit $\nu\to0$ or $h\to\infty$. 
  In a later part of the paper, however, we will show that combining a central $[\mathcal{D}_x]$ for the advection term with central discretizations of the diffusion term will always lead to a stable semi-discretized system. 
  %Furthermore, combining a $[\mathcal{D}_x]$ with a stencil biased towards the upwind direction and the diffusion term gives a $L_2$-stable semi-discretization of the advection-diffusion equation, see~\cref{sec:stab_ade}.
  %{\color{red} (NEXT HERE)
  %Indeed, if $h/\nu$ is large, one can conclude that the method is not stable (see the proof of~\cref{lm:stab_crit} below). %this will be the case when $\nu$ is mesh-dependent, for example, in the case of artificial viscosities.
  %}
\end{remark}
    %If $\nu$ represents the artificial viscosity for capturing discontinuities in the advection problems, $\nu$ typically diminishes to zero as $h\to0$.
    %Using~\cref{eq:acry_exam} as an example, if $\nu=O(h)$ as in the case of von Neumann viscosity, the leading term in the exponent of $e^{-\lambda_2t}$, see~\cref{eq:acry_semi_exam_mode_2}, is $O(h^{-1})$ instead of $O(h^{-2})$; hence in this case the discretization of the advection term still plays an important role in the exponential decay of the mode associated with $\lambda_2$. 

At the end of this section, we list a few HV-DDOs that are used in the numerical tests in~\cref{sec:num}.
Note that assuming a right-going wave, all $[\mathcal{D}_x]$ in~\cref{tb:acry_dx} has an upwind-biased stencil and leads to a stable semi-discretization of the linear advection equation $w_t+cw_x=0$, see~\cite{XZeng:2019a} and also the discussion in~\cref{sec:stab_ade}.
\begin{table}\centering
  \caption{$[\mathcal{D}_x]$ with up to fourth-order accuracy ($c>0$)~\cite{XZeng:2019a}.}
  \label{tb:acry_dx}
  \begin{tabular}{@{}llcll@{}}
    \toprule[.5mm]
      $\ $ & Symbol & $\quad$ & Formula of $[\mathcal{D}_x]_j$ & $\ $ \\ \cmidrule[.3mm](l){1-4}
      $\ $ & $[\mathcal{D}_x^{(1)}]$ & & $(-2\overline{w}_{\mhf{j}}+2w_j)/h$ \\
      $\ $ & $[\mathcal{D}_x^{(2)}]$ & & $(-6\overline{w}_{\mhf{j}}+2w_{j-1}+4w_j)/h$ \\
      $\ $ & $[\mathcal{D}_x^{(3)}]$ & & $(-7\overline{w}_{\mhf{j}}+\overline{w}_{\phf{j}}+2w_{j-1}+4w_j)/(2h)$ \\
      %$\ $ & $[\mathcal{D}_x^{(3b)}]$ & & $(-\overline{w}_{j-3/2}-17\overline{w}_{\mhf{j}}+8w_{j-1}+10w_j)/(2h)$ \\
      $\ $ & $[\mathcal{D}_x^{(4)}]$ & & $(-\overline{w}_{j-3/2}-31\overline{w}_{\mhf{j}}+2\overline{w}_{\phf{j}}+12w_{j-1}+18w_j)/(6h)$ \\
    \bottomrule[.5mm]
  \end{tabular}
\end{table}
\begin{table}\centering
  \caption{$[\mathcal{D}_x^c]$ with up to sixth-order accuracy.}
  \label{tb:acry_dxc}
  \begin{tabular}{@{}llcll@{}}
    \toprule[.5mm]
      $\ $ & Symbol & $\quad$ & Formula of $[\mathcal{D}_x^c]_j$ & $\ $ \\ \cmidrule[.3mm](l){1-4}
      $\ $ & $[\mathcal{D}_x^{(c-2)}]$ & & $(-\overline{w}_{\mhf{j}}+\overline{w}_{\phf{j}})/h$ \\
      $\ $ & $[\mathcal{D}_x^{(c-4)}]$ & & $(-4\overline{w}_{\mhf{j}}+4\overline{w}_{\phf{j}}+w_{j-1}-w_{j+1})/(2h)$ \\
      $\ $ & $[\mathcal{D}_x^{(c-6)}]$ & & $(-\overline{w}_{j-3/2}-81\overline{w}_{\mhf{j}}+81\overline{w}_{\phf{j}}+\overline{w}_{j+3/2}+24w_{j-1}-24w_{j+1})/(36h)$ \\
      %$\ $ & {\color{orange}$8$}   & 
      %     & {\color{orange}$(-14\overline{w}_{j-3/2}-230\overline{w}_{\mhf{j}}+230\overline{w}_{\phf{j}}+14\overline{w}_{j+3/2}+3w_{j-2}+96w_{j-1}-96w_{j+1}-3w_{j+2})/(108h)$} \\
      %$\ $ & {\color{orange}$10$}   & 
      %     & {\color{orange}$(-...+2350\overline{w}_{\phf{j}}+175\overline{w}_{j+3/2}+\overline{w}_{j+5/2}+...-900w_{j+1}-45w_{j+2})/(900h)$} \\
    \bottomrule[.5mm]
  \end{tabular}
\end{table}
\begin{table}\centering
  \caption{$[\mathcal{D}_{xx}^c]$ with up to fourth-order accuracy.}
  \label{tb:acry_dxx}
  \begin{tabular}{@{}llcll@{}}
    \toprule[.5mm]
      $\ $ & Symbol & $\quad$ & Formula of $[\mathcal{D}_{xx}^c]_j$ & $\ $ \\ \cmidrule[.3mm](l){1-4}
      $\ $ & $[\mathcal{D}_{xx}^{(c-2)}]$ & & $(3\overline{w}_{\mhf{j}}+3\overline{w}_{\phf{j}}-6w_j)/(h^2)$ \\
      $\ $ & $[\mathcal{D}_{xx}^{(c-4)}]$ & & $(15\overline{w}_{\mhf{j}}+15\overline{w}_{\phf{j}}-3w_{j-1}-24w_j-3w_{j+1})/(2h^2)$ \\
      %$\ $ & {\color{orange}$6$}   & 
      %     & {\color{orange}$(\overline{w}_{j-3/2}+209\overline{w}_{\mhf{j}}+209\overline{w}_{\phf{j}}+\overline{w}_{j+3/2}-48w_{j-1}-324w_j-48w_{j+1})/(24h)$} \\
      %$\ $ & {\color{orange}$8$}   & 
      %     & {\color{orange}$(31\overline{w}_{j-3/2}+1439\overline{w}_{\mhf{j}}+1439\overline{w}_{\phf{j}}+31\overline{w}_{j+3/2}-6w_{j-2}-384w_{j-1}-2160w_j-384w_{j+1}-6w_{j+2})/(144h)$} \\
    \bottomrule[.5mm]
  \end{tabular}
\end{table}
The digit in the operator symbol superscrit indicate the order of the DDO, for example, $[\mathcal{D}_x^{(3)}]$ is third order, whereas $[\mathcal{D}_x^{(4)}]$, $[\mathcal{D}_x^{(c-4)}]$, and $[\mathcal{D}_{xx}^{c-4}]$ are all fourth-order operators.
%The labels in the first columns indicate the order of the differential operator; we also use them to denote a particular HV scheme.
%For example, \hv{342} designates the HV method combinging a third-order $[\mathcal{D}_x]$, a fourth-order $[\mathcal{D}_x^c]$, and a second-order $[\mathcal{D}_{xx}^c]$ from~\cref{tb:acry_dx}--\cref{tb:acry_dxx}.
%Note that in the view of~\cref{thm:acry}, we list $[\mathcal{D}_x]$, $[\mathcal{D}_x^c]$, and $[\mathcal{D}_{xx}^c]$ with up to fourth, sixth, and fourth orders of accuracy, respectively. 
%{\color{red} (Review this part after adding the numerical tests)}

\section{Stability Analysis}
\label{sec:stab}
Stability analysis of the general ODE system~\cref{eq:prelim_hv_semi} remains out of reach as the stability barrier of the HV discretization for the advection equation remains an open problem, see recent progress in~\cite{XZeng:2026a}.
In this section, we focus on central HV methods for linear advection-diffusion equations and the remainder of this section is divided into three parts: \cref{sec:stab_gen} makes the notion of ``stability'' precise and derives inequalities that the coefficients need to satisfy in order for a general HV semi-discretization to be stable, \cref{sec:stab_diff} concentrates on central HV discretization of the linear diffusion equation, and~\cref{sec:stab_ade} generalizes the analysis to central HV methods for advection-diffusion equations.

\subsection{From stability to inequalities}
\label{sec:stab_gen}
In this section we derive explicit inequalities to satisfy by the HV-DDO coefficients, so that the ODE system~\cref{eq:prelim_hv_semi} is stable.
Particularly we assume the equation~\cref{eq:prelim_eqn} is discretized in space by~\cref{eq:prelim_hv_semi} with either upwind-biased or central $[\mathcal{D}_x]$, central $[\mathcal{D}_x^c]$, and central $[\mathcal{D}_{xx}^c]$.
%The main purpose of this section is to investigate the stability of the ODE system~\cref{eq:prelim_hv_semi}. %by extending a technique for advection equations proposed in previous work~\cite[Section 4]{XZeng:2019a}.
To this end, we denote the semi-discretized solution vector as
\begin{equation}\label{eq:stab_sol}
  \bs{w}(t) = [\overline{w}_{1/2},\;\overline{w}_{3/2},\;\cdots,\;\overline{w}_{N-1/2},\;w_0,\;w_1,\;\cdots,\;w_{N-1}]^T\;,
\end{equation}
and the ODE system~\cref{eq:prelim_hv_semi} is written in matrix form:
\begin{equation}\label{eq:stab_ode}
  \bs{w}' = -\frac{c}{h}\bs{D}\bs{w} + \frac{\nu}{h^2}\bs{K}\bs{w}\;.
\end{equation}
The coefficient matrix $\bs{D}$ corresponds to the advection term and it reads:
\begin{equation}\label{eq:stab_mat_dx}
  \bs{D} = \left[\begin{array}{cc}
    \bs{0}    & \bs{S}-\bs{I} \\
    G(\bs{S}) & H(\bs{S})
  \end{array}\right]\;,
\end{equation}
where $\bs{0}$ and $\bs{I}$ are the $N\times N$ zero matrix and identity matrix, respectively.
The cyclic matrix $\bs{S}$ is given by:
\begin{equation}\label{eq:stab_mat_s}
  \bs{S} = \left[\begin{array}{ccccc}
    0 & 1 & \cdots & 0 & 0 \\
    0 & 0 & \cdots & 0 & 0 \\ \vspace*{-.24in} \\
    \vdots & \vdots & \ddots & \vdots & \vdots \\
    0 & 0 & \cdots & 0 & 1 \\
    1 & 0 & \cdots & 0 & 0 
  \end{array}\right]\;,
\end{equation}
and the Laurent polynomials $G(s)$ and $H(s)$ are determined from the operator $[\mathcal{D}_x]$:
\begin{equation}\label{eq:stab_poly_dx}
  G(s) \eqdef \sum_{k=-l}^{r-1}\alpha_ks^k\;,\quad
  H(s) \eqdef \sum_{k=-l'}^{r'}\beta_ks^k\;.
\end{equation}
Similarly, the coefficient matrix $\bs{K}$ corresponds to the diffusion term and it reads:
\begin{equation}\label{eq:stab_mat_dxx}
  \bs{K} = \left[\begin{array}{cc}
    (\bs{S}-\bs{I})G^c(\bs{S}) & (\bs{S}-\bs{I})H^c(\bs{S}) \\
    A(\bs{S}) & B(\bs{S})
  \end{array}\right]
\end{equation}
where $G^c(s)$ and $H^c(s)$ are determined from $[\mathcal{D}_x^c]$:
\begin{equation}\label{eq:stab_poly_dx_c}
  G^c(s) \eqdef \sum_{k=-p}^{p-1}\alpha_k^cs^k\;,\quad
  H^c(s) \eqdef \sum_{k=-p'}^{p'}\beta_k^cs^k\;,
\end{equation}
and $A(s)$ and $B(s)$ are determined from $[\mathcal{D}_{xx}^c]$:
\begin{equation}\label{eq:stab_poly_dxx_c}
  A(s) \eqdef \sum_{k=-q}^{q-1}\mathcal{a}_ks^k\;,\quad
  B(s) \eqdef \sum_{k=-q'}^{q'}\mathcal{b}_ks^k\;.
\end{equation}

We define the cell P\'{e}clet number $\Pe=ch/\nu$ and for simplicity assume $\nu/h^2=1$, then~\cref{eq:stab_ode} can be written as
\begin{equation}\label{eq:stab_ode_simp}
  \bs{w}' = \bs{M}\bs{w}\;,\quad \bs{M} = -\Pe\bs{D}+\bs{K}\;.
\end{equation}
The stability of a general square complex matrix $\bs{M}$ is defined as usual:
%For simplicity, we assume $\nu>0$ and define the cell P\'{e}clet number $\Pe=ch/\nu$ and focus on the stability associated with the coefficient matrix $\bs{M} = -\Pe\bs{D}+\bs{K}$:
\begin{definition}\label{def:stab_mat}
  The matrix $\bs{M}$ is stable provided that: if $\lambda$ is an eigenvalue of $\bs{M}$, then it either has negative real part or $\textup{\oname{Re}}\lambda=0$ and it is simple.
\end{definition}
One can easily use the Jordan normal form of $\bs{M}$ to show that if $\bs{M}$ is stable, the solution to $\bs{w}'=\bs{M}\bs{w}$ is bounded for all $t\ge0$. %and the proof is to use the Jordan normal form of the matrix $\bs{M}$.
The next lemma concerns the spectrum of $\bs{M}$. 
\begin{lemma}\label{lm:stab_eig}
  Define $\Delta(\Pe,s)$ as:
  \begin{equation}\label{eq:stab_lm_eig_disc}
    \Delta(\Pe,s) = \left[(s-1)G^c(s)-B(s)+\Pe H(s)\right]^2+4(s-1)(A(s)-\Pe G(s))(H^c(s)-\Pe)\;.
  \end{equation}
  Then the spectrum of $\bs{M}=-\Pe\bs{D}+\bs{K}$ is a subset of:
  \begin{equation}\label{eq:stab_lm_eig_spec}
    \mathcal{S}(\Pe) = \left\{\frac{1}{2}\left((s-1)G^c(s)+B(s)-\Pe H(s)\pm\sqrt{\Delta(\Pe,s)}\right)\,:\;s\in\mathbb{C}\,,\;\nrm{s}=1\right\}\;.
  \end{equation}
\end{lemma}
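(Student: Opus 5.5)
The plan is to exploit the block-circulant structure of $\bs{M}$: diagonalize all four blocks simultaneously by the discrete Fourier transform, which reduces the $2N\times 2N$ eigenproblem to $N$ decoupled $2\times2$ eigenproblems, one for each $N$-th root of unity $s$.

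First I write $\bs{M}$ blockwise using \cref{eq:stab_mat_dx} and \cref{eq:stab_mat_dxx}:
\begin{displaymath}
  \bs{M} = \left[\begin{array}{cc}
    (\bs{S}-\bs{I})G^c(\bs{S}) & (\bs{S}-\bs{I})\left(H^c(\bs{S})-\Pe\,\bs{I}\right) \\
    A(\bs{S})-\Pe\, G(\bs{S}) & B(\bs{S})-\Pe\, H(\bs{S})
  \end{array}\right]\;.
\end{displaymath}
The Laurent polynomials make sense because $\bs{S}$ is a cyclic permutation matrix, so $\bs{S}^{-1}=\bs{S}^{N-1}$. Each block is therefore a polynomial in $\bs{S}$.

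Next I use that $\bs{S}$ is unitary with $\bs{S}^N=\bs{I}$. It has the orthonormal eigenbasis $\bs{v}_m=N^{-1/2}(1,s_m,s_m^2,\dots,s_m^{N-1})^T$, where $s_m=e^{2\pi i m/N}$ and $\bs{S}\bs{v}_m=s_m\bs{v}_m$; this is a one-line check from \cref{eq:stab_mat_s}. Consequently $F(\bs{S})\bs{v}_m=F(s_m)\bs{v}_m$ for every Laurent polynomial $F$. Let $\bs{U}=\mathrm{diag}(\bs{V},\bs{V})$, where $\bs{V}=[\bs{v}_0,\dots,\bs{v}_{N-1}]$. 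Then $\bs{U}^{*}\bs{M}\bs{U}$ consists of four diagonal blocks. After a permutation similarity that pairs index $m$ of the first half with index $m$ of the second half, it becomes block diagonal with $2\times2$ blocks
\begin{displaymath}
  \bs{m}(s) = \left[\begin{array}{cc}
    (s-1)G^c(s) & (s-1)(H^c(s)-\Pe) \\
    A(s)-\Pe\, G(s) & B(s)-\Pe\, H(s)
  \end{array}\right]\;,\quad s=s_m\;.
\end{displaymath}
Hence the spectrum of $\bs{M}$ is exactly the union over $m$ of the spectra of $\bs{m}(s_m)$.

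Finally, the eigenvalues of $\bs{m}(s)$ are $\frac12\left(\mathrm{tr}\pm\sqrt{(m_{11}-m_{22})^2+4m_{12}m_{21}}\right)$. The trace is $(s-1)G^c(s)+B(s)-\Pe H(s)$, and $(m_{11}-m_{22})^2=[(s-1)G^c(s)-B(s)+\Pe H(s)]^2$. The remaining term is $4m_{12}m_{21}=4(s-1)(H^c(s)-\Pe)(A(s)-\Pe G(s))$. Together these reproduce exactly \cref{eq:stab_lm_eig_disc} and the elements of \cref{eq:stab_lm_eig_spec}. Since every $s_m$ satisfies $\abs{s_m}=1$, the spectrum lies in $\mathcal{S}(\Pe)$, and the $\pm$ covers either branch of the square root.

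There is no real obstacle here. The only points needing care are bookkeeping ones: confirming that the eigenvector convention for $\bs{S}$ matches its definition, since a wrong convention only replaces $s$ by $\bar s$, which still has modulus one; and confirming that the signs in the off-diagonal blocks produce $+4(s-1)(A-\Pe G)(H^c-\Pe)$.
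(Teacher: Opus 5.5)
Your proof is correct and follows essentially the same route as the paper: both use the eigenvectors $\bs{v}_k$ of $\bs{S}$ (eigenvalues $s_k=e^{2\pi i k/N}$) to decouple $\bs{M}$ into the $N$ matrices of size $2\times2$ obtained by replacing $\bs{S}$ with $s_k$, and then read off the eigenvalues from the trace and discriminant. The only cosmetic difference is how the similarity is assembled. The paper builds it from the Jordan forms $\bs{M}_k=\bs{U}_k\bs{J}_k\bs{U}_k^{-1}$ via the columns $\bs{U}_k\otimes\bs{v}_k$, whereas you use the unitary block DFT $\mathrm{diag}(\bs{V},\bs{V})$ plus a permutation; both yield a full block-diagonal similarity, which also supplies the multiplicity information used later in \cref{lm:stab_crit}.
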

\begin{proof}
  We shall use the fact that all four $N\times N$ blocks of $\bs{M}$ are Laurent polynomials in $\bs{S}$ and that $\bs{S}$ has $N$ distinct eigenvalues $s_k=e^{i2k\pi/N}\,,\;k=1,\cdots,N$ and corresponding linearly independent eigenvectors $\bs{v}_1,\cdots\bs{v}_N\in\mathbb{C}^N\backslash\{\bs{0}\}$.
  For any $s_k$, we define a $2\times2$ matrix $\bs{M}_k$ as:
  \begin{displaymath}
    \bs{M}_k = -\Pe\left[\begin{array}{cc}
      0 & s_k-1 \\ G(s_k) & H(s_k)
    \end{array}\right] + \left[\begin{array}{cc}
      (s_k-1)G^c(s_k) & (s_k-1)H^c(s_k) \\ A(s_k) & B(s_k)
    \end{array}\right]\;.
  \end{displaymath}
  The two eigenvalues of $\bs{M}_k$ are:
  \begin{displaymath}
    \lambda_{1,2}(s_k) = \frac{1}{2}\left((s_k-1)G^c(s_k)+B(s_k)-\Pe H(s_k)\pm\sqrt{\Delta(\Pe,s_k)}\right)\in\mathcal{S}(\Pe)\;.
  \end{displaymath}
  In order to prove the first part of the lemma, it remains to show that the eigenvalues of $\bs{M}$ are precisely given by $\lambda_l(s_k),\;l=1,2,\;k=1,\cdots,N$. 
  To this end, we consider the Jordan normal form of $\bs{M}_k$, denoted by $\bs{J}_k$, and suppose $\bs{M}_k = \bs{U}_k\bs{J}_k\bs{U}_k^{-1}$.
  Denoting $\bs{M}_k=[m_{11}\ m_{12};\,m_{21}\ m_{22}]$ and $\bs{U}_k=[u_{11}\ u_{12};\,u_{21}\ u_{22}]$ for simplicity, and utilizing the fact that for any Laurent polynomial $P(s)$ there is $P(\bs{S})\bs{v}_k=P(s_k)\bs{v}_k$, we have:
  \begin{align*}
    \bs{M}(\bs{U}_k\otimes\bs{v}_k) &= 
    \left[\begin{array}{cc}
      (\bs{S}-\bs{I})G^c(\bs{S}) & (\bs{S}-\bs{I})(H^c(\bs{S})-\Pe\bs{I}) \\
      A(\bs{S})-\Pe G(\bs{S})       & B(\bs{S}) - \Pe H(\bs{S})
    \end{array}\right]
    \left[\begin{array}{cc}
      u_{11}\bs{v}_k & u_{12}\bs{v}_k \\ u_{21}\bs{v}_k & u_{22}\bs{v}_k
    \end{array}\right] \\
    &= \left[\begin{array}{cc}
      (m_{11}u_{11}+m_{12}u_{21})\bs{v}_k &
      (m_{11}u_{12}+m_{12}u_{22})\bs{v}_k \\
      (m_{21}u_{11}+m_{22}u_{21})\bs{v}_k &
      (m_{21}u_{12}+m_{22}u_{22})\bs{v}_k 
    \end{array}\right] \\
    &= (\bs{M}_k\bs{U}_k)\otimes\bs{v}_k = (\bs{U}_k\bs{J}_k)\otimes\bs{v}_k
     = (\bs{U}_k\otimes\bs{v}_k)\bs{J}_k\;.
  \end{align*}
  Because $\bs{U}_k$ is invertible for all $k$, and $\bs{v}_1,\cdots,\bs{v}_N$ are linearly independent, it follows immediately that $\bs{M}$ is similar to $\oname{diag}(\bs{J}_1,\bs{J}_2,\cdots,\bs{J}_N)$, which completes the proof.
\end{proof}

Because the row sums of $\bs{M}$ are all nil, zero is always an eigenvalue.
From the previous lemma, we obtain the following criterion to check the stability of the matrix $\bs{M}$, i.e., all remaining eigenvalues have negative real parts.
\begin{lemma}\label{lm:stab_crit}
  The coefficient matrix $\bs{M}$ is stable provided that $\mathcal{S}'(\Pe)$ is contained in the open left half complex plane, where $\mathcal{S}'(\Pe)\subset\mathcal{S}(\Pe)$ is given by:
  \begin{equation}\label{eq:stab_lm_eig_specprime}
    \mathcal{S}'(\Pe) = \left\{\frac{1}{2}\left((s-1)G^c(s)+B(s)-\Pe H(s)\pm\sqrt{\Delta(\Pe,s)}\right)\,:\;s\in\mathbb{C}\,,\;\nrm{s}=1,\;s\ne1\right\}\;.
  \end{equation}
\end{lemma}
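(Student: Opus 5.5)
The plan is to reuse the block-diagonalization from the proof of \cref{lm:stab_eig}. There we showed that $\bs{M}$ is similar to $\oname{diag}(\bs{J}_1,\cdots,\bs{J}_N)$, where $\bs{J}_k$ is the Jordan form of the $2\times2$ matrix $\bs{M}_k$ built from $s_k=e^{i2k\pi/N}$. So the spectrum of $\bs{M}$, with algebraic multiplicities, is the union over $k$ of the eigenvalues of $\bs{M}_k$. I will split the indices into $s_k\ne1$ ($k=1,\cdots,N-1$) and $s_N=1$, and check \cref{def:stab_mat} on each part separately.

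\emph{Case $s_k\ne1$.} The two eigenvalues $\lambda_{1,2}(s_k)$ of $\bs{M}_k$ are by definition elements of $\mathcal{S}'(\Pe)$. By hypothesis they therefore have strictly negative real parts, and these blocks contribute no eigenvalue on the imaginary axis.

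\emph{Case $s_N=1$.} Here the factor $s-1$ kills the first row, so
\begin{displaymath}
  \bs{M}_N=\left[\begin{array}{cc} 0 & 0 \\ A(1)-\Pe G(1) & B(1)-\Pe H(1)\end{array}\right]\;,
\end{displaymath}
which is lower triangular with eigenvalues $0$ and $B(1)-\Pe H(1)=B_0-\Pe\, b_0$. By \cref{thm:prelim_ddo_dxx} we have $B_0<0$. For $[\mathcal{D}_x]$ I use the standing assumption of this section that it is either central or upwind-biased.
\begin{itemize}
  \item If it is central, its nodal coefficients are antisymmetric, so $b_0=0$.
  \item If it is upwind-biased, the generalized upwind condition $cb_0>0$ holds (see the discussion after \cref{thm:acry_dx}), and hence $\Pe\, b_0=ch\,b_0/\nu\ge0$.
\end{itemize}
In either case $B_0-\Pe\, b_0<0$.

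Consequently the second eigenvalue of $\bs{M}_N$ is nonzero and distinct from $0$, so $\bs{M}_N$ is diagonalizable. The eigenvalue $0$ then appears exactly once among all the blocks, so it is a simple eigenvalue of $\bs{M}$. Every other eigenvalue of $\bs{M}$ has negative real part, which is exactly \cref{def:stab_mat}.

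The only genuinely delicate point is the $s=1$ block. The nonzero eigenvalue $B_0-\Pe\, b_0$ there is not covered by the hypothesis on $\mathcal{S}'(\Pe)$, and it must be controlled separately. I expect to handle it through the sign of $B_0$ together with the central/upwind assumption on $[\mathcal{D}_x]$, noting explicitly that this is implicitly part of the lemma's setting. The rest is bookkeeping of multiplicities through the similarity established in \cref{lm:stab_eig}.
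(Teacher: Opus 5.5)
Your proposal is correct and follows essentially the same route as the paper: only the $s=1$ block lies outside the hypothesis, and its eigenvalues are $0$ and $B_0-\Pe\,b_0<0$, using $B_0<0$ together with the central/upwind assumption on $[\mathcal{D}_x]$. The differences are cosmetic. You read these eigenvalues off the lower-triangular form of $\bs{M}_N$ and spell out why $0$ is simple using the similarity from \cref{lm:stab_eig}, whereas the paper evaluates the discriminant $\Delta(\Pe,1)=(-B_0+\Pe\,b_0)^2$ and simply asserts that the zero is simple.
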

\begin{proof}
  Due to our definition of stability, we only need to investigate $\mathcal{S}(\Pe)\backslash\mathcal{S}'(\Pe)$ and thus set $s=1$.
  In this case:
  \begin{displaymath}
    \Delta(\Pe,1) = (-B(1)+\Pe H(1))^2 = \left(-\sum_{k=-q'}^{q'}\mathcal{b}_k+\Pe \sum_{k=-l'}^{r'}\beta_k\right)^2
                = \left(-B_0+\Pe b_0\right)^2\;.
  \end{displaymath}
  By~\cref{thm:prelim_ddo_dxx} there is $B_0<0$.
  Because $[\mathcal{D}_x]$ is either upwind-biased or central, which gives either $\Pe b_0 = (cb_0)(h/\nu)>0$ (as $cb_0>0$) or $b_0=0$.
  Hence there is always $\Delta(\Pe,1)>0$ and the corresponding elements in $\mathcal{S}(\Pe)$ are:
  \begin{displaymath}
    \frac{1}{2}\left(B(1)-\Pe H(1)\pm\sqrt{\Delta(\Pe,1)}\right) = 0,\ -\Pe b_0+B_0\;,
  \end{displaymath}
  i.e., a simple zero and a negative real number.
\end{proof}
From the proof, we see that~\cref{lm:stab_crit} actually holds whenever $-B_0+\Pe b_0\ne0$, which even includes most ``downwind-biased'' choices for $[\mathcal{D}_x]$.
In this case, the only ``unlucky'' scenario happens if $-B_0+\Pe b_0=0$, when the matrix $\bs{M}$ has two zero eigenvalues. 
In practice, this means there exists a single mesh size $h$ such that $\bs{M}$ has two zero eigenvalues, which could be avoided by refining the mesh.
Nevertheless, we shall not consider downwind-biased $[\mathcal{D}_x]$ in this work as it is rarely used in practice.

%{\color{RoyalBlue}
%\begin{remark}\label{rm:stab_inv}
%  The proofs of these lemmas show that the matrix $\bs{M}$ has three invariant subspaces that are independent of the values of $s$ and $\Pe$:
%  \begin{equation}\label{eq:stab_inv_span}
%    \mathbb{X}_1 = \oname{span}\left(\begin{bmatrix}\bs{1} \\ \bs{1}\end{bmatrix}\right)\;,\quad
%    \mathbb{X}_2 = \oname{span}\left(\begin{bmatrix}\bs{0} \\ \bs{1}\end{bmatrix}\right)\;,\quad
%    \mathbb{X}_3 = \oname{span}\left(\left\{\begin{bmatrix}\bs{v}_k \\ \bs{0}\end{bmatrix}, \begin{bmatrix}\bs{0}\\ \bs{v}_k\end{bmatrix}:\, 1\le k\le N-1\right\}\right)\;.
%  \end{equation}
%  Here $\bs{0}$ and $\bs{1}$ are $N\times1$ vectors with all elements equaling $0$ and $1$, respectively.
%  More specifically, they are invariant subspaces of both $-\Pe\bs{D}$ and $\bs{K}$, where $\mathbb{X}_1$ is the eigenspace corresponding to the common eigenvalue $0$, $\mathbb{X}_2$ is the eigenspace corresponding to the negative eigenvalue $-\Pe b_0$ and $B_0$ for the two matrices, respectively.
%  As for $\mathbb{X}_3$, it is spanned by columns of $\bs{U}_k\otimes\bs{v}_k$, $1\le k\le N-1$, which is easy to show to be identical to~\cref{eq:stab_inv_span}$_3$.
%  Decomposition of $\mathbb{C}^{2N}$ into the three invariant subspaces will be utlized in the proof of $L_2$-stability of the method in~\cref{sec:stab_ade}.
%\end{remark}
%}

\subsection{Stability proof for diffusion equations}
\label{sec:stab_diff}
The stability proof in the case when $\Pe=0$ or $\bs{M}=\bs{K}$ builds on analysis using positive trigonometric polynomials.
For this purpose we state two useful results below.
The first is a classical one by Vietoris~\cite{LVietoris:1959a}, see also~\cite{RAskey:1974a}.
\begin{theorem}\label{thm:stab_viet}
  If $c_0\ge \cdots \ge c_n > 0$ and $(2k)c_{2k}\le(2k-1)c_{2k-1}$ for all $k\ge1$, then:
  \begin{align}
    \label{eq:stab_viet_sin}
    &\sum_{k=1}^nc_k\sin k\theta > 0\;,\quad\forall\ 0<\theta<\pi\;; \\
    \label{eq:stab_viet_cos}
    \textrm{ and }\quad &\sum_{k=0}^nc_k\cos k\theta > 0\;,\quad\forall\ 0<\theta<\pi\;.
  \end{align}
\end{theorem}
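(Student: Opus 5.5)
Since \cref{thm:stab_viet} is Vietoris' classical theorem, the paper can simply cite~\cite{LVietoris:1959a,RAskey:1974a}. If I were to prove it, I would follow the Askey--Steinig route: reduce to one extremal coefficient sequence by summation by parts, then prove positivity for that sequence directly.

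\emph{Step 1: reduction to the Vietoris sequence.} Define $d_0=d_1=1$ and $d_{2k}=d_{2k+1}=\frac{(2k-1)!!}{(2k)!!}$ for $k\ge1$. Then $d_{2k}/d_{2k-1}=(2k-1)/(2k)$ and $d_{2k+1}/d_{2k}=1$. Set $e_k=c_k/d_k>0$.
\begin{itemize}
\item The hypothesis $(2k)c_{2k}\le(2k-1)c_{2k-1}$ is exactly $e_{2k}\le e_{2k-1}$.
\item The monotonicity $c_{2k+1}\le c_{2k}$ is exactly $e_{2k+1}\le e_{2k}$.
\end{itemize}
Hence $(e_k)$ is positive and nonincreasing. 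Let $S_m(\theta)=\sum_{k=1}^m d_k\sin k\theta$ and $T_m(\theta)=\sum_{k=0}^m d_k\cos k\theta$. Abel summation gives
\begin{displaymath}
\sum_{k=1}^n c_k\sin k\theta=\sum_{m=1}^{n-1}(e_m-e_{m+1})S_m(\theta)+e_nS_n(\theta)\;,
\end{displaymath}
and the same identity holds with $T_m$ in place of $S_m$. All weights are nonnegative and $e_n>0$. So it suffices to prove $S_m(\theta)>0$ and $T_m(\theta)>0$ on $(0,\pi)$ for every $m$.

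\emph{Step 2: pairing terms.} Because $d_{2k}=d_{2k+1}$, I pair consecutive terms with sum-to-product formulas, e.g.
\begin{displaymath}
d_{2k}\bigl(\sin 2k\theta+\sin(2k+1)\theta\bigr)=2d_{2k}\cos\tfrac{\theta}{2}\sin\tfrac{(4k+1)\theta}{2}\;,
\end{displaymath}
and similarly for cosines. Since $\cos\frac{\theta}{2}>0$ on $(0,\pi)$, this reduces $S_m$ and $T_m$, up to a possible unpaired last term, to sums of the form
\begin{displaymath}
\sum_k d_{2k}\sin\bigl((2k+\tfrac12)\varphi\bigr)\;,\qquad \varphi=\theta\in(0,\pi)\;,
\end{displaymath}
and a cosine analogue. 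I would treat the unpaired last term separately: it has coefficient $d_m\to0$, which is small against the main sum.

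\emph{Step 3: positivity of the reduced sums.} I would use the integral representation
\begin{displaymath}
\frac{(2k-1)!!}{(2k)!!}=\frac{1}{\pi}\int_0^1 t^{k-1/2}(1-t)^{-1/2}\,dt\;.
\end{displaymath}
This turns the reduced sum into an integral of a finite geometric series in $te^{2i\varphi}$, which can be summed in closed form; I would then analyse the sign of the resulting kernel. The same structure is visible in the limit $\sum_k d_{2k}z^{2k}=(1-z^2)^{-1/2}$, whose boundary values give explicit positive limits; the remaining task is to control the truncation error.

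\emph{Main obstacle.} Step 3 is the hard part: proving positivity \emph{uniformly in $m$ and in $\theta$}. The difficulty is sharpest near $\theta=\pi$ for the sine sum and near $\theta=\pi$ for the cosine sum, where the limit function degenerates. There I would first try a Fej\'er--Jackson style induction in $m$: show that any new local minimum of $S_m$ occurs where it exceeds a positive lower bound. If that fails, I would fall back on citing~\cite{RAskey:1974a}.
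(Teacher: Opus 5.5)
The paper gives no proof of this statement. It quotes it as Vietoris' classical theorem with the references \cite{LVietoris:1959a,RAskey:1974a}, so your first sentence is exactly the paper's treatment.

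Your Steps~1--2 are correct. Normalising by Vietoris' extremal sequence turns the two hypotheses into monotonicity of $e_k=c_k/d_k$. Abel summation then reduces everything to $S_m,T_m>0$, and the pairing identity is right; this is the Askey--Steinig reduction.

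As a self-contained proof, however, the sketch stops where the content of the theorem begins. Step~3 is the positivity of $\sum_{k\le K}d_{2k}\sin\bigl((2k+\frac12)\theta\bigr)$ and its cosine analogue, uniformly in $K$ and $\theta$, and it is only described, not carried out. In addition, the one concrete claim you make about even $m$ is false.

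Because $d_{2k}=d_{2k+1}$ and $2k\,d_{2k}=(2k-1)d_{2k-1}$, one has $\sum_{k=0}^{K-1}d_{2k}=2K\,d_{2K}$. Take $\theta=\pi-\epsilon$. The paired part of $S_{2K}$ is $2\sin\frac{\epsilon}{2}\sum_{k<K}d_{2k}\cos\bigl((2k+\frac12)\epsilon\bigr)=2Kd_{2K}\,\epsilon+O(\epsilon^3)$. The unpaired term is $d_{2K}\sin 2K\theta=-2Kd_{2K}\,\epsilon+O(\epsilon^3)$. The two cancel to first order, so $S_{2K}(\pi-\epsilon)=O(\epsilon^3)$ and the sign is decided only at third order. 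The unpaired term is therefore not ``small against the main sum'', and the even case needs its own argument; Vietoris and Askey--Steinig supply it. Without Step~3 you are relying on the citation, just as the paper does.

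If you want something self-contained for this paper, note that it only ever applies the theorem to \emph{special} Vietoris sequences, for which $b_k=kc_k$ is nonincreasing for $k\ge1$. With $b_{n+1}=0$, Abel summation gives
\begin{displaymath}
  \sum_{k=1}^nc_k\sin k\theta=\sum_{m=1}^n(b_m-b_{m+1})\sum_{k=1}^m\frac{\sin k\theta}{k}\;,\qquad
  \sum_{k=0}^nc_k\cos k\theta=(c_0-c_1)+\sum_{m=1}^n(b_m-b_{m+1})\Bigl(1+\sum_{k=1}^m\frac{\cos k\theta}{k}\Bigr)\;.
\end{displaymath}
All weights are nonnegative and the last one, $b_n$, is positive. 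Strict positivity on $(0,\pi)$ then follows from the Fej\'er--Jackson inequality $\sum_{k=1}^m\frac{\sin k\theta}{k}>0$ and Young's inequality $1+\sum_{k=1}^m\frac{\cos k\theta}{k}>0$. Both are classical and much easier to prove than Vietoris' theorem.
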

For convenience, we shall call $\{c_0,\;c_1,\;\cdots,\;c_n\}$ a {\it special Vietoris sequence} if:
\begin{equation}\label{eq:stab_viet}
c_0\ge\cdots\ge c_n>0\quad\textrm{ and }\quad kc_k\le(k-1)c_{k-1}\ \textrm{ for all }k\ge2\;.
\end{equation}
It is clear that a special Vietoris sequence satisfies all conditions required in the theorem.

\smallskip

The second result is elementary:
\begin{theorem}\label{thm:stab_dec}
  If $c_1\ge c_2\ge\cdots\ge c_n\ge0$, then for all $0<\phi<\pi$:
  \begin{equation}\label{eq:stab_dec}
    \sum_{k=1}^nc_k\sin(2k-1)\phi\ge0\;.
  \end{equation}
  %and if we set $c_{n+1}=0$, then for all $0<\theta<\pi$:
  %\begin{equation}\label{eq:stab_dec2}
  %  \sum_{k=1}^n(c_k+c_{k+1})\sin k\theta \ge0\;.
  %\end{equation}
\end{theorem}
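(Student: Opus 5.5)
We need to prove that if $c_1\ge c_2\ge\cdots\ge c_n\ge0$, then $\sum_{k=1}^n c_k\sin(2k-1)\phi\ge0$ for $0<\phi<\pi$. The plan is Abel summation combined with the closed form of the partial sums of odd-harmonic sines.

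First, set $S_m(\phi)=\sum_{k=1}^m\sin(2k-1)\phi$. Multiplying by $2\sin\phi$ and using $2\sin\phi\sin(2k-1)\phi=\cos(2k-2)\phi-\cos 2k\phi$, the sum telescopes to $1-\cos 2m\phi=2\sin^2 m\phi$. Hence
\begin{displaymath}
  S_m(\phi)=\frac{\sin^2 m\phi}{\sin\phi}\ge0\;,\quad 0<\phi<\pi\;,
\end{displaymath}
where the sign follows because $\sin\phi>0$ on this interval.

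Next, apply summation by parts with $c_{n+1}\eqdef0$:
\begin{displaymath}
  \sum_{k=1}^nc_k\sin(2k-1)\phi=\sum_{m=1}^{n}(c_m-c_{m+1})S_m(\phi)\;.
\end{displaymath}
The monotonicity $c_1\ge\cdots\ge c_n$ together with $c_n\ge0$ makes every difference $c_m-c_{m+1}$ nonnegative. Each term is then a product of two nonnegative factors, so the whole sum is nonnegative.

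No step poses a real obstacle. The only point requiring care is the telescoping identity for $S_m$, which is what guarantees nonnegativity of the partial sums; once it is verified, the rest is immediate.
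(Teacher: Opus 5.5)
Your proof is correct and is essentially the paper's argument: the paper multiplies the sum by $\sin\phi$, uses the same telescoping identity (written as $\cos(k-1)\theta-\cos k\theta$ with $\theta=2\phi$), and sums by parts to get $\sin\phi\sum_{k=1}^n c_k\sin(2k-1)\phi=\sum_{k=1}^n(c_k-c_{k+1})\sin^2 k\phi\ge0$. You reach the same identity by first computing the partial sums $S_m=\sin^2 m\phi/\sin\phi$ and then applying Abel summation.
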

\begin{proof}
  %We first show~\cref{eq:stab_dec} and l
  Let $\theta=2\phi$, then $2\sin\phi\sin(2k-1)\phi = \cos(k-1)\theta - \cos\,k\theta$ and it follows:
  \begin{align*}
     &\ \sin\phi\sum_{k=1}^nc_k\sin(2k-1)\phi = \frac{1}{2}\sum_{k=1}^nc_k\left[\cos(k-1)\theta-\cos\,k\theta\right] \\
    =&\ \sum_{k=1}^nc_k\left[\sin^2k\phi-\sin^2(k-1)\phi\right] = \sum_{k=1}^n(c_k-c_{k+1})\sin^2k\phi \ge0\;,
  \end{align*}
  where $c_{n+1}=0$.
  Finally, \cref{eq:stab_dec} comes from the fact that $\sin\phi>0$ for $0<\phi<\pi$.
\end{proof}
  %Next we show~\cref{eq:stab_dec2} and let $\phi=\theta/2$; then using~\cref{eq:stab_dec} and $\cos\phi>0$ we obtain:
  %\begin{displaymath}
  %  2\cos\phi\sum_{k=1}^nc_k\sin(2k-1)\phi \ge 0\;,
  %\end{displaymath}
  %which is exactly~\cref{eq:stab_dec2}.

%Thirdly, a Fourier sine series is non-negative if the coefficients are positive, non-decreasing, and convex~\cite{INPak:1980a} or~\cite[3.5.18]{DSMitrinovic:1970a}:
%\begin{theorem}\label{thm:stab_cvx}
%  Let $\{c_k\}_{k=1}^n$ be a positive, non-decreasing, and convex series, i.e., $c_1\ge c_2\ge\cdots\ge c_n>0$ and $c_k-2c_{k+1}+c_{k+2}\ge0$ for $1\le k\le n$, where if a subscript is larger than $n$ we designate the $c$-coefficient by zero.
%  Then for all $0\le\theta\le\pi$:
%  \begin{equation}\label{eq:stab_cvx}
%    \sum_{k=1}^{n-1}c_k\sin k\theta + \frac{1}{2}c_n\sin n\theta \ge 0\;.
%  \end{equation}
%\end{theorem}

Now we can state and prove the main stability theorem regarding the HV semi-discretization of diffusion equations:
\begin{theorem}\label{thm:stab_asym}
  When discretizing the diffusion equation ($\Pe=0$), the semi-discretized HV method constructed by $[\mathcal{D}_x^c]$ and $[\mathcal{D}_{xx}^c]$ given in~\cref{sec:prelim} is stable for all $h$.
\end{theorem}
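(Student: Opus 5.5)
By \cref{lm:stab_crit} with $\Pe=0$, it suffices to show that for every $s=e^{i\theta}$, $0<\theta<2\pi$, both eigenvalues of
\begin{displaymath}
  \bs{M}_s=\left[\begin{array}{cc}(s-1)G^c(s) & (s-1)H^c(s)\\ A(s) & B(s)\end{array}\right]
\end{displaymath}
have negative real parts. By the symmetry $\theta\mapsto2\pi-\theta$ (conjugation), we may take $0<\theta\le\pi$. The first step is to use the (anti)symmetries of \cref{cor:prelim_ddo_dx_c} and \cref{thm:prelim_ddo_dxx} to reduce $\bs{M}_s$ to a real matrix. With $\phi=\theta/2$, the symmetries give
\begin{align*}
  (s-1)G^c(s) &= -4\sin\phi\sum_{k=0}^{p-1}\alpha_k^c\sin(2k+1)\phi,\\
  B(s) &= \mathcal{b}_0+2\sum_{k=1}^{q'}\mathcal{b}_k\cos k\theta,\\
  (s-1)H^c(s) &= s^{1/2}\cdot(-4\sin\phi)\sum_{k=1}^{p'}\beta^c_k\sin k\theta,\\
  A(s) &= 2s^{-1/2}\sum_{k=0}^{q-1}\mathcal{a}_k\cos(2k+1)\phi.
\end{align*}
The first two entries are real. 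The off-diagonal entries are real multiples of $s^{1/2}$ and $s^{-1/2}$, so conjugating by $\oname{diag}(1,s^{1/2})$ makes $\bs{M}_s$ real. Consequently both eigenvalues lie in the open left half plane if and only if the trace $T(\theta)<0$ and the determinant $D(\theta)>0$.

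\textbf{Trace.} From \cref{eq:prelim_ddo_dx_c_alpha_pos}, each $\alpha^c_\nu$, $\nu\ge0$, is a tail sum of the terms $\frac{2(1+k(\zeta_k^{p,p}+\zeta_k^{p',p'}))}{k^2}C_k^{p,p}C_k^{p',p'}$ plus a nonnegative boundary term. Since $\zeta_k^{p,p}=H_{p+k}-H_{p-k}>0$ for $k>0$, every such term is positive. Hence $\alpha^c_0\ge\alpha^c_1\ge\cdots\ge\alpha^c_{p-1}\ge0$, and \cref{thm:stab_dec} gives $(s-1)G^c(s)\le0$. For $B$, all $\mathcal{b}_k<0$, so
\begin{displaymath}
  -B(s)\ \ge\ |\mathcal{b}_0|-2\sum_{k\ge1}|\mathcal{b}_k|.
\end{displaymath}
Since $C_k^{q,q}<1$ for $k\ge1$, we have $|\mathcal{b}_k|<6/k^2$, while \cref{eq:prelim_ddo_dxx_c_b_z} shows $|\mathcal{b}_0|\ge 6\sum_{k=1}^{q'}1/k^2$. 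Therefore $B(s)<0$, and $T<0$ strictly.

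\textbf{Determinant.} This is the main obstacle. We need
\begin{displaymath}
  D=(s-1)\left[G^c(s)B(s)-H^c(s)A(s)\right]>0 .
\end{displaymath}
The term $(s-1)G^cB$ is $\ge0$, but the cross term $-(s-1)H^cA$ has no obvious sign, since $\beta^c_k<0$ for $k>0$ and the $\mathcal{a}_k>0$. My plan is to expand $D(\theta)$ by product-to-sum formulas into a single cosine polynomial. I expect the $P$-exactness conditions (\cref{rm:prelim_hv_poly}) to force this polynomial to vanish to high order at $\theta=0$, so that it factors as $D=\sin^{2m}\phi\,\cdot\,Q(\theta)$, with $Q$ a cosine polynomial whose constant term is positive. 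I then try to show that the coefficients of $Q$ form a special Vietoris sequence \cref{eq:stab_viet}, and conclude with \cref{thm:stab_viet}, using the closed forms of $\alpha^c,\beta^c,\mathcal{a},\mathcal{b}$ in terms of $C$ and $\zeta$.

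The cases $(p',q')\in\{p,p-1\}\times\{q,q-1\}$ would be treated separately, checking the low-order operators of \cref{tb:acry_dxc} and \cref{tb:acry_dxx} first. If the Vietoris monotonicity fails for some coefficients, the fallback is to split $Q$ into a Vietoris-type part plus a remainder that is dominated using the strict inequality margin obtained in the trace estimate.
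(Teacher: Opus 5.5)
Your reduction is correct as far as it goes. Conjugating by $\mathrm{diag}(1,s^{1/2})$ does make $\bs{M}_s$ real, so stability is equivalent to $T<0$ and $D>0$. Here $D>0$ is exactly the inequality $(s-1)AH^c<(s-1)G^cB$ of the paper's Part~3, and your trace step is its Part~2. There is one slip in the trace step: since $2\sum_{k\ge1}|\mathcal{b}_k|<12\sum_{k=1}^{q'}k^{-2}$, you need $|\mathcal{b}_0|=6H_{q,2}+6H_{q',2}\ge 12\sum_{k=1}^{q'}k^{-2}$, not merely $6\sum_{k=1}^{q'}k^{-2}$.

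The genuine gap is that $D>0$, which is the entire content of the theorem, is left as a plan, and the plan as stated fails. First, exactness cannot make $D$ vanish to high order at $\theta=0$. Indeed $D=\lambda_1\lambda_2$ with $\lambda_1\approx-\theta^2$ and $\lambda_2\to B_0\neq0$, so $D\sim|B_0|\theta^2$ and only $m=1$ is available. Second, take $p=p'=q=q'=1$ (the operators $[\mathcal{D}_x^{(c-4)}]$ and $[\mathcal{D}_{xx}^{(c-4)}]$). Then $(s-1)G^cB=8\sin^2\phi\,(12+3\cos\theta)$ and $(s-1)AH^c=15\sin^2\theta$, so $D=\sin^2\phi\,(66-6\cos\theta)$. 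The quotient has a negative coefficient, so it is not a Vietoris sequence. Your fallback has no mechanism either, because the trace margin says nothing about the determinant. Finally, the cross term does have a sign. Applying \cref{thm:stab_viet} separately to $\{\mathcal{a}_{k-1}-\mathcal{a}_k\}$ and $\{-\beta^c_k\}$ gives $(s-1)AH^c>0$ for $\theta\neq\pi$. So $D$ is a genuine difference of two positive quantities, and some quantitative comparison is unavoidable.

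The missing idea is to decouple the two operators instead of expanding their product. Set $S_a=\sum_{k=1}^q(\mathcal{a}_{k-1}-\mathcal{a}_k)\sin k\theta$, $S_\beta=\sum_{k=1}^{p'}(-\beta^c_k)\sin k\theta$, $S_\alpha=\sum_{k=0}^{p-1}\alpha^c_k\sin(2k+1)\phi$ and $C_b=-\mathcal{b}_0-2\sum_{k=1}^{q'}\mathcal{b}_k\cos k\theta$. Then $D=4(\sin\phi\,S_\alpha C_b-S_aS_\beta)$.

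The paper proves two inequalities, each involving a single operator:
\begin{itemize}
\item $\cos\phi\,S_a<\sin\phi\,C_b$, which uses only $[\mathcal{D}^c_{xx}]$;
\item $S_\beta<\cos\phi\,S_\alpha$, which uses only $[\mathcal{D}^c_x]$.
\end{itemize}
Both left sides are positive by Vietoris, so the two inequalities can be multiplied to give $D>0$.

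Each inequality rearranges into a sine series with a first-moment constraint:
\begin{itemize}
\item $\sum_{k=0}^q c_k\sin(2k+1)\phi$ with $c_0>0>c_k$ for $k\ge1$ and $\sum(2k+1)c_k=0$;
\item $\sum_{k=1}^p d_k\sin k\theta$ with $d_k>0$, $\sum kd_k=1$ and $d_1>\frac12$.
\end{itemize}
The elementary bound $|\sin m\phi|<m\sin\phi$ then finishes each one. This is where exactness actually enters: through these first-moment identities, which the paper derives from the partial-fraction expansion of $\prod_{k=1}^p(1-x^2/k^2)^{-2}$, not through high-order vanishing of $D$.
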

In the proof below, we omit the verification of special Vietoris sequences since it is quite lengthy and tedious; instead, the details are offered in~\cref{app:seq}.
\begin{proof}
  Following~\cref{lm:stab_crit}, it suffices to show $\mathcal{S}'(0)$ is contained in the open left half of the complex plane -- in fact, we'll show it contains only negative real numbers.

  \smallskip
  
  {\bf Part 1}. 
  First we show $\Delta(0,s)=[(s-1)G^c(s)-B(s)]^2+4(s-1)A(s)H^c(s)\ge0$ where the inequality is strict if $s\ne-1$.
  Using~\cref{eq:prelim_ddo_dx_c_alpha_pos} and~\cref{eq:prelim_ddo_dxx_c_b_z}-\cref{eq:prelim_ddo_dxx_c_b_nz}:
  \begin{align*}
    (s-1)G^c(s) &= (s^{\frac{1}{2}}-s^{-\frac{1}{2}})\sum_{k=-p}^{p-1}\alpha_k^cs^{k+\frac{1}{2}}
                 = (s^{\frac{1}{2}}-s^{-\frac{1}{2}})\sum_{k=0}^{p-1}\alpha_k^c(s^{k+\frac{1}{2}}-s^{-k-\frac{1}{2}}) \\
                &= -4\sin\phi\sum_{k=0}^{p-1}\alpha_k^c\sin(2k+1)\phi \;, \\
    B(s)        &= \sum_{k=-q'}^{q'}\mathcal{b}_ks^k = \mathcal{b}_0+\sum_{k=1}^{q'}\mathcal{b}_k(s^k+s^{-k})
                 = \mathcal{b}_0+2\sum_{k=1}^{q'}\mathcal{b}_k\cos k\theta\;,
  \end{align*}
  where $s=e^{i\theta}$ and $\phi = \theta/2$.
  Hence $(s-1)G^c(s)-B(s)\in\mathbb{R}$ and we'll next verify $(s-1)A(s)H^c(s)\ge0$ with the inequality being strict when $s\ne-1$; to this end, using~\cref{eq:prelim_ddo_dxx_c_a_neg} and~\cref{eq:prelim_ddo_dx_c_beta_z}--\cref{eq:prelim_ddo_dx_c_beta_nz}:
  \begin{align*}
    (s-1)A(s) &= (s^{\frac{1}{2}}-s^{-\frac{1}{2}})\sum_{k=-q}^{q-1}\mathcal{a}_ks^{k+\frac{1}{2}} 
               = (s^{\frac{1}{2}}-s^{-\frac{1}{2}})\sum_{k=0}^{q-1}\mathcal{a}_k(s^{k+\frac{1}{2}}+s^{-k-\frac{1}{2}}) \\
              &= 4i\sin\phi\sum_{k=0}^{q-1}\mathcal{a}_k\cos(2k+1)\phi
               = 2i\sum_{k=0}^{q-1}\mathcal{a}_k(\sin(k+1)\theta-\sin k\theta)\;, \\
    H^c(s)    &= \sum_{k=-p'}^{p'}\beta_k^cs^k = \sum_{k=1}^{p'}\beta_k^c(s^k-s^{-k})
               = 2i\sum_{k=1}^{p'}\beta_k^c\sin k\theta\;.
  \end{align*}
  Thus their product is real. 
  It is easy to see that $(s-1)A(s)H^c(s)=0$ if $s=-1$ or $\theta=\pi$; next we suppose $0<\theta<2\pi$ and $\theta\ne\pi$ and prove the following equivalent statement:
  \begin{equation}\label{eq:stab_asym_pos_equiv}
    \left(\sum_{k=1}^q(\mathcal{a}_{k-1}-\mathcal{a}_k)\sin\,k\theta\right)\left(\sum_{k=1}^{p'}(-\beta_k^c)\sin\,k\theta\right) > 0\;,\quad 0<\theta<\pi\ \textrm{ or }\ \pi<\theta<2\pi\;,
  \end{equation}
  where $\mathcal{a}_q=0$.
  We focus on the case $0<\theta<\pi$, since the other half can be proved similarly by a change of variable $\theta\rightarrow2\pi-\theta$. 
  
  For this purpose, one can verify that both $\{\mathcal{a}_0-\mathcal{a}_1,\;\mathcal{a}_1-\mathcal{a}_2,\;\cdots,\;\mathcal{a}_{q-1}-\mathcal{a}_q=\mathcal{a}_{q-1}\}$ and $\{-\beta_1^c,\;-\beta_2^c,\;\cdots,\;-\beta_{p'}^c\}$ are special Vietoris sequences (see~\cref{app:seq}), hence~\cref{eq:stab_asym_pos_equiv} holds for $0<\theta<\pi$ by~\cref{thm:stab_viet}.

  \smallskip

  {\bf Part 2}. 
  Next we show the non-square-root part of $\mathcal{S}'(0)$ is negative:
  \begin{equation}\label{eq:stab_asym_comm_neg}
    (s-1)G^c(s)+B(s) < 0\;,\quad\forall\ s=e^{i\theta},\;0<\theta<2\pi\;.
  \end{equation}
  Previously, it is already shown that:
  \begin{equation}\label{eq:stab_asym_comm_neg_gc}
    (s-1)G^c(s) = -4\sin\phi\sum_{k=0}^{p-1}\alpha_k^c\sin(2k+1)\phi \in \mathbb{R}
  \end{equation}
  and
  \begin{equation}\label{eq:stab_asym_comm_neg_b}
    B(s) = -\left((-\mathcal{b}_0)+\sum_{k=1}^{q'}(-2\mathcal{b}_k)\cos k\theta\right) \in \mathbb{R}\;;
  \end{equation}
  the next step is to prove $(s-1)G^c(s)\le0$ and $B(s)<0$.
  Between the two, the latter is a consequence of~\cref{thm:stab_viet} and the fact that $\{-\mathcal{b}_0,\;-2\mathcal{b}_1,\;\cdots,\;-2\mathcal{b}_{q'}\}$ is a special Vietoris sequence.
  In fact, if $s\ne-1$ the strict inequality comes from the theorem and a change of variable $\theta\to2\pi-\theta$ if needed.
  When $s=-1$, we'll see in the proof in the appendix that all inequalities for this special Vietoris sequence are strict; hence:
  \begin{displaymath}
    B(-1) = -\left[\left((-\mathcal{b}_0)-(-2\mathcal{b}_1)\right)+\left((-2\mathcal{b}_2)-(-2\mathcal{b}_3)\right)+\cdots\right] < 0\;.
  \end{displaymath}
  The first inequality $(s-1)G^c(s)\le0$ comes from~\cref{thm:stab_dec}, and we just need to verify $\alpha_0^c\ge\alpha_1^c\ge\cdots\ge\alpha_{p-1}^c\ge0$, which is obvious in the view of~\cref{eq:prelim_ddo_dx_c_alpha_pos}.
  
  \smallskip

  {\bf Part 3}. 
  Lastly, we show $\mathcal{S}'(0)$ contains only negative real numbers by proving:
  \begin{displaymath}
    \Delta(0,s) < \left[(s-1)G^c(s)+B(s)\right]^2\ \Longleftrightarrow\
    (s-1)A(s)H^c(s) < (s-1)G^c(s)B(s)
  \end{displaymath}
  or equivalently
  \begin{align}
    \notag
    &\ \left(\sum_{k=1}^q(\mathcal{a}_{k\!-\!1}\!-\!\mathcal{a}_k)\sin k\theta\right)
    \left(\sum_{k=1}^{p'}(-\beta_k^c)\sin k\theta\right) \\
    <&\  
    \label{eq:stab_asym_eigval_equiv}
    \left(\sin\phi\sum_{k=0}^{p-1}\alpha_k^c\sin(2k\!+\!1)\phi\right)
    \left(-\!\mathcal{b}_0\!+\!\sum_{k=1}^{q'}(-\!2\mathcal{b}_k)\cos k\theta\right)\!.
  \end{align}
  By symmetry, we only need to consider $0<\theta<\pi$ or $0<\phi<\pi/2$.
  To proceed, we will split the $\mathcal{a}$, $\mathcal{b}$ series and the $\alpha^c$, $\beta^c$ series and prove separately:
  \begin{align}
    \label{eq:stab_asym_eigval_dxx}
    \cos\phi\sum_{k=1}^q(\mathcal{a}_{k-1}-\mathcal{a}_k)\sin k\theta &< \sin\phi\left(-\mathcal{b}_0+\sum_{k=1}^{q'}(-2\mathcal{b}_k)\cos k\theta\right)\;, \\
    \label{eq:stab_asym_eigval_dxc}
    \left(\sum_{k=1}^{p'}(-\beta_k^c)\sin k\theta\right) &< \cos\phi\sum_{k=0}^{p-1}\alpha_k^c\sin(2k+1)\phi\;,
  \end{align}

  It is easy to rearrange~\cref{eq:stab_asym_eigval_dxx} as:
  \begin{align*}
    & \sum_{k=1}^q(\mathcal{a}_{k-1}-\mathcal{a}_k)\left[\sin(2k\!+\!1)\phi+\sin(2k\!-\!1)\phi\right] \\
    <& (-2\mathcal{b}_0)\sin\phi+\sum_{k=1}^{q'}(-2\mathcal{b}_k)\left[\sin(2k\!+\!1)\phi-\sin(2k\!-\!1)\phi\right]
  \end{align*}
  or equivalently:
  \begin{equation}\label{eq:stab_asym_eigval_dxx_equiv}
    (-\mathcal{a}_0+\mathcal{a}_1-2\mathcal{b}_0+2\mathcal{b}_1)\sin\phi + \sum_{k=1}^q\left(-\mathcal{a}_{k-1}+\mathcal{a}_{k+1}-2\mathcal{b}_k+2\mathcal{b}_{k+1}\right)\sin(2k+1)\phi > 0\;,
  \end{equation}
  where $\mathcal{a}_q=\mathcal{a}_{q+1}=\mathcal{b}_{q+1}=0$ and if $q'=q-1$ there is also $\mathcal{b}_q=0$.
  Denoting the left hand side by $\sum_{k=0}^qc_k\sin(2k+1)\phi>0$ for short, we shall show in~\cref{app:seq} that:
  \begin{equation}\label{eq:stab_asym_eigval_dxx_seq}
    \sum_{k=0}^q(2k+1)c_k = 0\;;\quad c_0>0\;;\quad c_k<0\;,\quad 1\le k\le q\;;
  \end{equation}
  then one has for all $0<\phi<\pi/2$:
  \begin{displaymath}
    \sum_{k=0}^qc_k\sin(2k+1)\phi \ge c_0\sin\phi - \sum_{k=1}^q(-c_k)\abs{\sin(2k+1)\phi} 
    > c_0\sin\phi - \sum_{k=1}^q(-c_k)(2k+1)\sin\phi = 0\;,
  \end{displaymath}
  where we used the fact that $\abs{\sin m\phi} < m\sin\phi$ for all $m\ge2$ and $0<\phi<\pi$.

  For~\cref{eq:stab_asym_eigval_dxc}, we rearrange it as:
  \begin{displaymath}
    \sum_{k=0}^{p-1}\alpha_k^c\left[\sin(k\!+\!1)\theta+\sin k\theta\right]+\sum_{k=1}^{p'}2\beta_k^c\sin k\theta > 0
  \end{displaymath}
  or equivalently:
  \begin{equation}\label{eq:stab_asym_eigval_dxc_equiv}
    \sum_{k=1}^p\left(\alpha_{k-1}^c+\alpha_k^c+2\beta_k^c\right)\sin k\theta > 0\;,
  \end{equation}
  where as before it is understood that $\alpha_p^c=0$ and if $p'=p-1$ there is also $\beta_p^c=0$.
  Rewriting the left hand side of~\cref{eq:stab_asym_eigval_dxc_equiv} as $\sum_{k=1}^pd_k\sin k\theta$ for short, we'll prove in~\cref{app:seq} that:
  \begin{equation}\label{eq:stab_asym_eigval_dxc_seq}
    \sum_{k=1}^pkd_k = 1\;;\quad
    d_1>d_2>\cdots>d_p>0\;;\quad
    d_1 > \frac{1}{2}\;.
  \end{equation}
  Then one can show that:
  \begin{displaymath}
    \sum_{k=1}^pk\sin k\theta \ge d_1\sin\theta - \sum_{k=2}^pd_k\abs{\sin k\theta} > 
    d_1\sin\theta - \sum_{k=2}^pkd_k\sin\theta = (2d_1-1)\sin\theta > 0
  \end{displaymath}
  for all $0<\theta<\pi$.
  The proof is completed.

%{\color{RoyalBlue}
%  \smallskip
%
%  {\bf Remark}. Some unfinished/unsuccessful attempts are given in appendices, which should not be submitted along with this paper. 
%  A first attempt is to represent the series sum as error functions of algebraic approximations to logarithmic functions, see~\cref{app:unuse_log}.
%  The second one uses a different split and focuses on discrete convexity, but one of them is very close to but not really always true, see~\cref{app:unuse_cvx}.
%}

\end{proof}

\subsection{Stability of central HV methods for advection-diffusion equations}
\label{sec:stab_ade}
While the stability theory for general HV discretization of advection equations is not completed, we consider in this section a class of central HV methods for advection-diffusion equations -- that is, the discrete differential operator $[\mathcal{D}_x]$ in~\cref{eq:prelim_hv_semi_cell} uses a central stencil:
\begin{equation}
  [\mathcal{D}_xw]_j = \frac{1}{h}\sum_{k=-r}^{r-1}\alpha_k\overline{w}_{\phf{j+k}}+\frac{1}{h}\sum_{k=-r'}^{r'}\beta_kw_{j+k}\;,
\end{equation}
where we abuse the notation and keep the symbols $\alpha_k$ and $\beta_k$, which are defined by~\cref{cor:prelim_ddo_dx_c}, with $p$ and $p'$ replaced by $r$ and $r'$, respectively.

To this end, the Laurent polynomials~\cref{eq:stab_poly_dx} are modified to $G(s) = \sum_{k=-r}^{r-1}\alpha_ks^k$ and $H(s) = \sum_{k=-r'}^{r'}\beta_ks^k$, and stability of the semi-discretized method reduces to show $\mathcal{S}'(\oname{Pe})$ is contained in the left complex plane, where $\mathcal{S}'(\oname{Pe})$ is given by~\cref{eq:stab_lm_eig_specprime}.
Suppressing the dependence in $s$, defining $F^c=(s-1)G^c$, $F=(s-1)G$ and $C=(s-1)A$, and following the analysis in~\cref{sec:stab_diff}, we have for all $s=e^{i\theta}$, $0<\theta<2\pi$:
\begin{equation}\label{eq:stab_ade_ineq_1}
  F^c, F < 0\;,\quad
  H^c, H \in i\mathbb{R}^-\;,\quad
  C \in i\mathbb{R}^+\;,\quad
  B < 0\;,
\end{equation}
and in addition:
\begin{equation}\label{eq:stab_ade_ineq_2}
  CH^c < F^cB\;,\quad
  CH < FB\;.
\end{equation}
Writing $F^c(s)=-{f}^c(\theta)$, $F(s)=-{f}(\theta)$, $H^c(s)=-i{h}^c(\theta)$, $H(s)=-i{h}(\theta)$, $C(s)=i{c}(\theta)$, and $B(s)=-{b}(\theta)$, then ${f}^c, {f}, {h}^c, {h}, {c}, {b}$ are all positive functions, and~\cref{eq:stab_ade_ineq_2} becomes:
\begin{equation}\label{eq:stab_ade_ineq_scr}
  \delta^c = f^cb-ch^c > 0\;,\quad
  \delta = fb-ch > 0\;.
\end{equation}
To this end, writing $x=\oname{Pe}\ge0$ for short:
\begin{align*}
  \Delta(\oname{Pe},s) &= \Delta(x,s) = (-f^c+b-ihx)^2 + 4(ic+fx)(-ih^c-x) \\
  &= -(h^2+4f)x^2 - 2i(2fh^c+2c+hb-hf^c)x + (b-f^c)^2+4ch^c\;.
\end{align*}
The condition $\mathcal{S}'(x) = \mathcal{S}'(\oname{Pe}) \subset\mathbb{C}^-$ thus reduces to:
\begin{align}
  \notag
  & \pm\oname{Re}\sqrt{\Delta(x,s)} < f^c+b \\
  \notag
  \Leftrightarrow& \left(\sqrt{c_0+ic_1x+c_2x^2}+\sqrt{c_0-ic_1x+c_2x^2}\right)^2 < 4(f^c+b)^2 \\
  \label{eq:stab_ade_equiv_1}
  \Leftrightarrow& \sqrt{(c_0+c_2x^2)^2+c_1^2x^2} < 2(f^c+b)^2-c_0-c_2x^2\;.
\end{align}
Here $c_0=(b-f^c)^2+4ch^c$, $c_1=-2(2fh^c+2c+hb-hf^c)$, and $c_2=-(h^2+4f)$.
The right hand side of~\cref{eq:stab_ade_equiv_1} is:
\begin{align*}
  2(f^c+b)^2-c_0-c_2x^2 &= 2(f^c+b)^2-(b-f^c)^2-4ch^c+(h^2+4f)x^2 \\
  &= (f^c+b)^2+4\delta^c + (h^2+4f)x^2 > 0\;,
\end{align*}
therefore,~\cref{eq:stab_ade_equiv_1} is equivalent to:
\begin{align}
  \notag
  &\quad(c_0+c_2x^2)^2 + c_1^2x^2 < [2(f^c+b)^2-c_0]^2-2c_2[2(f^c+b)^2-c_0]x^2+c_2^2x^4 \\
  \notag
  \Leftrightarrow&\quad0 < 4(f^c+b)^2[(f^c+b)^2-c_0] - \left\{2c_2[2(f^c+b)^2-c_0]+c_1^2+2c_0c_2\right\}x^2 \\
  \label{eq:stab_ade_equiv_2}
  \Leftrightarrow&\quad0 < 16(f^c+b)^2\delta^c + 16\left[f(f^c+b)^2+(fh^c+c+hb)(hf^c-c-fh^c)\right]x^2\;.
\end{align}
The $x^2$-coefficient of~\cref{eq:stab_ade_equiv_2} is actually positive for all $0<\theta<2\pi$; and for this purpose two lemmas are needed:
\begin{lemma}\label{lm:stab_ade_dx}
  Let $f(\theta)=-(s-1)G(s)$ and $h(\theta)=iH(s)$ be decided by the central operator $[\mathcal{D}_x]$~\cref{eq:prelim_hv_dx} with $r=l$ and $r'=l'$, then for all $0<\theta\le\pi$:
  \begin{equation}\label{eq:stab_ade_dx}
    f > \theta\,h\;,\quad
    \theta(\theta+h) > f > \omega(\omega+h)\;,
  \end{equation}
  %where $\omega = \theta\left(1-\frac{\theta^2}{18\pi^2}\right)$.
  where $\omega = \theta\left(1-\frac{\theta^2}{50}\right)$.
\end{lemma}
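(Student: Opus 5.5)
The plan is to reduce all three inequalities to sign and size information on a single error function, using the order condition for $[\mathcal{D}_x]$. With $s=e^{i\theta}$, the definitions~\cref{eq:acry_semi_fun_dx} give
\[
\alpha(\theta)=\frac{(s-1)G(s)}{(i\theta)^2}=\frac{f}{\theta^2},\qquad \beta(\theta)=\frac{H(s)}{i\theta}=-\frac{h}{\theta}.
\]
The approximation property~\cref{eq:acry_semi_approx_dx} then reads $f-\theta h=\theta^2(1+\varepsilon(\theta))$, where $\varepsilon$ is the exact (not merely asymptotic) relative truncation error of the symbol. Setting $E(\theta)\eqdef f-\theta h-\theta^2=\theta^2\varepsilon(\theta)$, the three claims become:
\[
\text{(i) } E>-\theta^2,\qquad \text{(ii) } E<0,\qquad \text{(iii) } E>(\omega^2-\theta^2)+(\omega-\theta)h .
\]
Since $\omega<\theta$ and $h>0$, (iii) requires an upper bound on $h$ as well as a lower bound on $E$. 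So the work splits into a closed form for $E$ and two-sided bounds on $E$ and $h$ valid for all $0<\theta\le\pi$ and all stencil sizes $(r,r')$.

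For the closed form I would use the Hermite-interpolation representation from the proof of~\cref{thm:prelim_ddo_dxx}, applied to $[\mathcal{D}_x]$. The operator is $W''(x_j)$ of the Hermite interpolant of a primitive $W$, with simple nodes at $\pm r$ when $r'=r-1$ and double nodes at $-r',\dots,r'$. Take $W(x)=e^{i\kappa x}/(i\kappa)$ and write the interpolation error as $e(x)=\Omega(x)\,W[\text{nodes},x]$, where $\Omega(x)=x^2Q(x)$ and $Q$ is an even polynomial. Then $e(0)=e'(0)=0$ and $e''(0)=2Q(0)\,W[\text{nodes},0]$. By the Hermite--Genocchi formula,
\[
W[\text{nodes},0]=\frac{(i\kappa)^{n-1}}{n!}\int e^{i\kappa t}M(t)\,dt ,
\]
where $n$ is the number of nodes counted with multiplicity, $M$ is the symmetric B-spline (a probability density) on the node set, and $\int e^{i\kappa t}M(t)\,dt=\widehat M(\theta)$ is real and even. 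After rescaling, this gives
\[
E(\theta)=-K_{r,r'}\,\theta^{P+2}\,\widehat M(\theta),\qquad P=2(r+r'),
\]
with an explicit constant $K_{r,r'}>0$ built from $Q(0)=\pm\prod k^{2}\cdots$. The sign is fixed by the parity of $P/2$ together with the sign of $Q(0)$; I expect it to come out negative, in agreement with the symbol expansions of~\cref{tb:acry_dxc}. A parallel computation, or the explicit sine series $h=2\sum_{k\ge1}(-\beta_k)\sin k\theta$ with the special Vietoris structure of $\{-\beta_k\}$ from~\cref{app:seq}, should give $0<h\le \theta\,|b_0|$ with $|b_0|=4\sum_k C_k^{r,r}C_k^{r',r'}$, together with a matching lower bound.

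With this representation, (ii) reduces to $\widehat M(\theta)>0$ on $(0,\pi]$. This holds because $M$ is a convolution-type B-spline whose support spreads over $[-r,r]$ while its variance is small; the standard bound $\widehat M(\theta)\ge 1-\tfrac12\theta^2\sigma^2$ with $\sigma^2\le$ (explicit, roughly $r/6$) is not enough for large $r$. For large $r$ I would instead use the product structure: the Fourier transform of a B-spline with knots $t_k$ is a divided difference of $e^{i\theta t}$, which I would bound below directly. Claims (i) and (iii) then need $K_{r,r'}\theta^{P}\widehat M(\theta)<1$, together with an inequality of the form $K\theta^{P}\le \tfrac{\theta^2}{25}+\tfrac{\theta^2 h}{50\,\theta}$ after expanding $\omega=\theta(1-\theta^2/50)$. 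For this I would show that $K_{r,r'}\pi^{P}$ decays rapidly in $r$ (factorials in the denominator from $C_k^{r,r}$), check the two or three smallest stencils from~\cref{tb:acry_dxc} by hand, and handle the rest by a crude uniform bound.

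The main obstacle is making these estimates uniform in $(r,r')$ all the way up to $\theta=\pi$, where the asymptotic expansion gives no control. In particular, the positivity of $\widehat M$ and the explicit constant $1/50$ in $\omega$ must survive the worst case, which I expect to be the lowest-order operator $[\mathcal{D}_x^{(c-2)}]$. That operator has $f=4\sin^2(\theta/2)$ and $h=0$, so (iii) becomes $4\sin^2(\theta/2)>\theta^2(1-\theta^2/50)^2$; this should be verified directly and suggests where $1/50$ comes from. If the B-spline route proves too delicate, my fallback is to prove the inequalities for each fixed small stencil by elementary trigonometric estimates, and for large stencils to bound $|E|\le K\theta^{P+2}$ using $|\widehat M|\le1$, then absorb it into the margin $\theta^2-\omega^2\approx \theta^4/25$.
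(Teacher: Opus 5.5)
The genuine gap is inequality (ii); in addition, the one case you planned to check by hand is false.

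\textbf{What is right.} Your reduction to $E=f-\theta h-\theta^2$ is correct, and so is the Hermite--Genocchi closed form. For $r'=r$ it gives $E=-\frac{2(r!)^4}{(4r+2)!}\,\theta^{4r+2}\,\widehat M(\theta)$, where $M$ is the normalized B-spline on the knots $-r,-r,\dots,0,0,0,\dots,r,r$. The crude bound $|\widehat M|\le 1$ then yields (i) and (iii). This is exactly the estimate $R<\frac{2(p!)^4}{(4p+2)!}\theta^{4p+2}$ that the paper uses for $R=-E$. Also, (iii) needs no upper bound on $h$: since $\omega<\theta$, the term $(\omega-\theta)h$ only helps, and $h\ge 0$ suffices.

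\textbf{The gap: inequality (ii).} You need $E<0$, i.e.\ $\widehat M>0$ on $(0,\pi]$ uniformly in the stencil. This is the real content of the lemma, and your proposal gives no argument for it.
\begin{itemize}
\item The moment bound fails, because the variance of $M$ grows like $r/12$; it is not small.
\item $M$ has repeated knots, so it is not a convolution of box functions, and $\widehat M$ has no sinc-product formula.
\item ``Bounding the divided difference of $e^{i\theta t}$ below directly'' only restates $\widehat M>0$.
\item Your fallback $|E|\le K\theta^{P+2}$ carries no sign information.
\end{itemize}
The missing idea is the paper's integral representation. One writes $C_k^{p,p}=\mathscr S(k)$, where
\[
\mathscr S(x)=\frac{\Gamma(p+1)^2}{\Gamma(p+1+x)\Gamma(p+1-x)}=s_p\int_{-1/2}^{1/2}\cos^{2p}(\pi t)\,e^{-2\pi i x t}\,dt .
\]
The $\zeta_k^{p,p}$ terms enter through $\mathscr C'(k)=-2\zeta_k^{p,p}\mathscr C(k)$ with $\mathscr C=\mathscr S^2$. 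Hence $\mathscr C$ and $\mathscr C'$ are Fourier transforms of the compactly supported autoconvolution $g*g$ of $g(t)=\cos^{2p}(\pi t)$ on $[-\frac12,\frac12]$, and of $t\,(g*g)(t)$. Periodizing (Poisson summation) turns the finite sums $\sum_k\mathscr C(k)e^{ik\theta}$ and $\sum_k\mathscr C'(k)e^{ik\theta}$ in $R'$ into point values of these functions. The result is
\[
R(\theta)=-E(\theta)=2s_p^2\int_0^\theta\int_0^{\theta'}\sin^{2p}\frac{t}{2}\,\sin^{2p}\frac{\theta'-t}{2}\,dt\,d\theta' .
\]
This is manifestly positive on $(0,\pi]$. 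Using $\sin x<x$ in it also reproduces your bounds for (i) and (iii).

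\textbf{The worst case cannot be verified.} For $[\mathcal{D}_x^{(c-2)}]$ ($r=1$, $r'=0$) one has $h=0$ and
\[
f=4\sin^2(\theta/2)=\theta^2-\theta^4/12+O(\theta^6),\qquad \omega^2=\theta^2-\theta^4/25+O(\theta^6).
\]
Hence $f<\omega(\omega+h)$ for all small $\theta>0$. It also fails at $\theta=\pi$, where $4<\pi^2(1-\pi^2/50)^2\approx 6.36$. Your own closed form for $r'=r-1$, namely $E=-\frac{2r^2((r-1)!)^4}{(4r)!}\theta^{4r}\widehat M$, shows this: for $r=1$ it gives $E=-\frac{\theta^4}{12}\widehat M$, against a margin of only $\theta^2-\omega^2\approx\theta^4/25$.

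The paper's proof in fact treats only $r'=r$, where $E=O(\theta^{4r+2})$ and the constant $1/50$ works, and it calls $r'=r-1$ ``parallel.'' Your crude bound does give (i) and (iii) for $r'=r-1$ when $r\ge 2$, but the third inequality must exclude $r=1$, $r'=0$. A correct write-up should restrict the statement accordingly and put its main effort into the positivity (ii).
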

Note that the operator $[\mathcal{D}_x]$ when $r=l$ and $r'=l'$ is precisely $[\mathcal{D}_x^c]$~\cref{eq:prelim_hv_dx_c} with $p=l$ and $p'=l'$; we use $f$ and $h$ in this lemma as~\cref{eq:stab_ade_dx} is what is needed in the proof of the latter theorem.
This lemma is proved in~\cref{app:ade}, which however uses the notations of $[\mathcal{D}_x^c]$ as a continuation of the notations in the previous appendix.
\begin{lemma}\label{lm:stab_ade_dxx}
  Let $c(\theta)=-i(s-1)A(s)$ and $b(\theta)=-B(s)$ be determined by the central operator $[\mathcal{D}^c_{xx}]$~\cref{eq:prelim_hv_dxx_c}, then for all $0<\theta\le\pi$:
  \begin{equation}\label{eq:stab_ade_dxx}
    c < \omega b < \theta b\;,
  \end{equation}
  where $\omega$ is defined the same as before.
\end{lemma}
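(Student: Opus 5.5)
Lemma~\ref{lm:stab_ade_dxx} asks for $c < \omega b < \theta b$ on $0<\theta\le\pi$. The second inequality is immediate: $b>0$ by Part~2 of the proof of~\cref{thm:stab_asym}, and $\omega<\theta$ because $\theta^2/50>0$. So the work is in $c<\omega b$. First I would write both sides explicitly. From Part~1 of the proof of~\cref{thm:stab_asym},
\[
c(\theta) = -i(s-1)A(s) = 4\sin\phi\sum_{k=0}^{q-1}\mathcal{a}_k\cos(2k+1)\phi = 2\sum_{k=1}^{q}(\mathcal{a}_{k-1}-\mathcal{a}_k)\sin k\theta,
\]
and also
\[
b(\theta) = -\mathcal{b}_0+\sum_{k=1}^{q'}(-2\mathcal{b}_k)\cos k\theta .
\]
The target inequality is therefore
\[
\omega\Bigl(-\mathcal{b}_0-2\sum_{k\ge1}\mathcal{b}_k\cos k\theta\Bigr) - 2\sum_{k\ge1}(\mathcal{a}_{k-1}-\mathcal{a}_k)\sin k\theta > 0 .
\]

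I would use the order conditions to reduce this to a small-remainder statement. Write $c(\theta)=\theta^3\,\mathrm{Re}$-type parts of $\mathcal{a}(\theta)$ and $b=\theta^2\,\mathcal{b}(\theta)$ up to sign, using~\cref{eq:acry_semi_fun_dxx_c}. Then~\cref{eq:acry_semi_approx_dxx_c} gives $\theta b - c = \theta^3(1+O(\theta^{P_3}))$ with $P_3\ge 2$. So as $\theta\to0$,
\[
\theta b - c \approx \theta^3, \qquad b \approx -B_0 .
\]
It follows that $\omega b - c = (\theta b - c) - \theta^3 b/50 \approx \theta^3(1 + B_0/50)$. This requires $-B_0<50$ uniformly in $q$. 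I would verify that bound from~\cref{eq:prelim_ddo_dxx_c_b_z}–\cref{eq:prelim_ddo_dxx_c_b_nz}: $-B_0 = \sum_\nu 6C_\nu^{q,q}C_\nu^{q',q'}/\nu^2 + 3\sum 1/k^2 + 3\sum 1/k^2$, which is at most roughly $12\cdot\pi^2/6 + 2\cdot\pi^2 \approx 40$. This step explains the constant $50$.

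For the global statement on $(0,\pi]$, I would prove $\theta b - c \ge \kappa\,\theta^3 b$ with some $\kappa>1/50$, equivalently $\omega b - c>0$. The route mirrors Part~3 of the proof of~\cref{thm:stab_asym}. Expand $\theta b - c$ and bound each $\sin k\theta$ from above by its Taylor polynomial, using $\sin x \le x - x^3/6 + x^5/120$ and $\cos x \ge 1 - x^2/2$. Combined with the sign structure established in~\cref{app:seq} ($\mathcal{a}_{k-1}-\mathcal{a}_k>0$, $-\mathcal{b}_k>0$, the special Vietoris properties, and $\sum_k(2k+1)c_k=0$-type moment identities), this reduces the claim to a polynomial inequality in $\theta$. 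The coefficients of that polynomial are the moments $\sum k^m(\mathcal{a}_{k-1}-\mathcal{a}_k)$ and $\sum k^m\mathcal{b}_k$. The order conditions fix the low moments exactly: $m=1$ and $m=3$ come from $P_3\ge2$, and $m=0$ and $m=2$ come from $B_0$ and the $\theta^2$-consistency.

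The main obstacle is the uniformity in $q$. Higher moments grow with the stencil width, so naive Taylor truncation fails near $\theta=\pi$. To handle this I would split $(0,\pi]$ into $(0,\theta_0]$, where the Taylor/moment argument applies, and $[\theta_0,\pi]$. On the second interval I would use the crude bounds $c\le 2\sum(\mathcal{a}_{k-1}-\mathcal{a}_k)$ and $b\ge b(\pi)>0$, both with explicit $q$-independent constants from the closed forms. This pairs with a direct finite check for small $q$ and an asymptotic ($q\to\infty$) limit profile to cover all $q$.
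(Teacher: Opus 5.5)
Your reduction is the right one. The second inequality is immediate. The first amounts to showing $\theta b-c\ge\kappa\,\theta^3 b$ for some $\kappa>1/50$ on all of $(0,\pi]$, \emph{uniformly in $q$}. Your bound $b\le b(0)=-B_0\le 24H_{q,2}<4\pi^2$ is exactly the one the paper uses.

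The gap is the global, $q$-uniform lower bound on $\theta b-c$: the mechanism you propose for it does not work. With $\sin x\le x-x^3/6+x^5/120$ and $\cos x\ge 1-x^2/2$, the order conditions leave
\[
\omega b-c\;\ge\;\theta^3\Bigl(1-\tfrac{b(0)}{50}\Bigr)+\theta^5\Bigl(\tfrac{1}{100}\textstyle\sum_k k^2(-2\mathcal{b}_k)-\tfrac{1}{60}\sum_k k^5(\mathcal{a}_{k-1}-\mathcal{a}_k)\Bigr).
\]
For $q'=q$ the fifth moment is at least $12\sum_k k^2[C_k^{q,q}]^2$, which grows like $q^{3/2}$, because the $C_k^{q,q}$ decay only on the scale $k\sim\sqrt q$. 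This truncation, and any fixed-order one, therefore certifies positivity only on some $(0,\theta_0(q)]$ with $\theta_0(q)\to0$.

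The crude bounds $c\le 2\mathcal{a}_0$ and $b\ge b(\pi)$ cannot cover the rest. As $q\to\infty$, $2\mathcal{a}_0\ge 24\sum_k[C_k^{q,q}]^2/k^3\to 24\sum_{k\ge1}k^{-3}\approx 28.8$, while $b(\pi)\to\pi^2$. You would therefore need $\omega>2.9$, but $\omega\le\omega(\pi)\approx 2.52$ on $(0,\pi]$, so for large $q$ these bounds fail at every $\theta$.

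Nor does ``a finite check for small $q$ plus the $q\to\infty$ limit profile'' close the argument. Near $\theta=0$, $\theta b-c$ is an $O(\theta^3)$ cancellation between $O(\theta)$ quantities. You need a quantitative, $\theta$-uniform estimate of the remainder $\theta b-c$ itself, not just convergence of $b$ and $c$.

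The missing idea is the paper's exact, $q$-uniform representation of $R(\theta)=\theta b-c$:
\begin{itemize}
\item Interpolate $[C_k^{q,q}]^2=\mathscr{C}(k)$ by $\mathscr{C}(x)=\bigl[\Gamma(q+1)^2/(\Gamma(q+1+x)\Gamma(q+1-x))\bigr]^2$. Its log-derivative gives $\mathscr{C}'(k)=-2\zeta_k^{q,q}\mathscr{C}(k)$, which absorbs the harmonic-number terms in $\mathcal{a}_k$.
\item With this, $R(0)=R'(0)=0$ and $R''(\theta)=-12\theta\sum_{k\ge1}\mathscr{C}(k)\cos k\theta-12\sum_{k\ge1}\mathscr{C}'(k)\sin k\theta$.
\item $\sqrt{\mathscr{C}}$ is a reciprocal Beta function, i.e.\ (up to the factor $s_q=2^{2q}(q!)^2/(2q)!$) the Fourier transform of $\cos^{2q}(\pi t)\chi_{[-1/2,1/2]}(t)$. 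Periodizing (Poisson summation) gives the closed form $R''(\theta)=6\theta-6s_q^2\int_0^\theta\sin^{2q}\frac{t}{2}\,\sin^{2q}\frac{\theta-t}{2}\,dt$.
\item Using $\sin x\le x$ and a Beta integral then yields $R(\theta)>\theta^3-\frac{6(q!)^4}{(4q+3)!}\theta^{4q+3}\ge(1-\pi^4/840)\theta^3$ for every $q$ and $0<\theta\le\pi$.
\item Hence $\omega b-c=R-\theta^3b/50>(1-\pi^4/840-4\pi^2/50)\theta^3>0$, which is your inequality with $\kappa\approx0.022$.
\end{itemize}
Without this, or some other uniform handle on $\theta b-c$ over the whole interval, your proposal does not establish the lemma.
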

The main result of this section is:
\begin{theorem}\label{thm:stab_ade}
  Let the semi-discretization~\cref{eq:prelim_hv_semi} be given by a central $[\mathcal{D}_x]$, that is~\cref{eq:prelim_hv_dx} with $l=r$ and $l'=r'$, and $[\mathcal{D}_x^c]$ and $[\mathcal{D}_{xx}^c]$ as before. 
  Then the resulting ODE system is stable for all $h>0$ and $\Pe\ge0$.
\end{theorem}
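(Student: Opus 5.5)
By \cref{lm:stab_crit}, it suffices to show that $\mathcal{S}'(\Pe)$ lies in the open left half-plane for every $s=e^{i\theta}$ with $0<\theta<2\pi$. The case $\theta\in(\pi,2\pi)$ reduces to $(0,\pi)$ by the change of variable $\theta\to2\pi-\theta$. This change conjugates $\Delta$ and hence maps $\mathcal{S}'$ to its complex conjugate, so real parts are preserved. We therefore fix $0<\theta\le\pi$ and $x=\Pe\ge0$.

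The reduction preceding the statement shows that the condition $\pm\oname{Re}\sqrt{\Delta(x,s)}<f^c+b$ is equivalent to~\cref{eq:stab_ade_equiv_2}. The constant term $16(f^c+b)^2\delta^c$ is strictly positive: Part 3 of the proof of \cref{thm:stab_asym} gives $\delta^c>0$, and $f^c\ge0$, $b>0$ from Part 2. (At $\theta=\pi$ we have $h^c=c=0$, so $\delta^c=f^cb>0$ still holds, using $f^c>0$ there.) Hence the theorem follows once the $x^2$-coefficient is shown to be nonnegative, i.e.
\begin{displaymath}
  E \eqdef f(f^c+b)^2+(fh^c+c+hb)(hf^c-c-fh^c) \ge 0 .
\end{displaymath}
We will in fact aim for $E>0$.

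To prove this, expand $E$ and group the terms as a quadratic in the ``diffusive'' quantities $(f^c,h^c)$ and $(b,c)$, using the inequalities of \cref{lm:stab_ade_dx} and \cref{lm:stab_ade_dxx}. First, $f>\theta h$ together with $c<\theta b$ gives $hb-c>hb-\theta b>\ldots$; more usefully, $c<\omega b$ and $f>\omega(\omega+h)$ give $fb-c h>0$ with a margin. Second, the same lemmas applied to $[\mathcal{D}^c_x]$ (which is itself a central operator, so \cref{lm:stab_ade_dx} applies to $f^c,h^c$ as well) give $\theta(\theta+h^c)>f^c>\omega(\omega+h^c)$. The plan is to write
\begin{displaymath}
  E = f(f^c+b)^2 - (c+fh^c)^2 + hf^c(fh^c+c) + hb(hf^c-c-fh^c),
\end{displaymath}
and to bound $(c+fh^c)^2$ above by $f(f^c+b)^2$ plus the remaining positive terms. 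The idea is that $c<\omega b$ and $f h^c< f(f^c/\omega-\omega)$ give $c+fh^c<\omega b+f f^c/\omega-\omega f$. One then compares with $\sqrt{f}(f^c+b)$ using $\omega^2<f/(1+h/\omega)$, i.e.\ $f>\omega^2+\omega h$. After this step, what remains is a polynomial inequality in the nonnegative variables $b,f^c,h,h^c$ with coefficients depending on $\theta$, which should follow by AM--GM-type grouping.

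I expect this algebra to be the main obstacle, because the cross term $-(c+fh^c)^2$ competes directly with $f(f^c+b)^2$. The gap between $\theta$ and $\omega=\theta(1-\theta^2/50)$ is exactly what must absorb the discrepancy. If a direct grouping fails, my fallback is to view $E$ as a quadratic in $x$-independent ratios. I would normalize by $b$, set $u=c/b\in(0,\omega)$ and $v=f^c$, and show that $E$ is minimized at an endpoint in $u$, since $E$ is concave in $c$ (its $c^2$-coefficient is $-1$). Then I would check the two endpoints $u=0$ and $u=\omega$ separately using the two-sided bounds on $f$ and $f^c$.
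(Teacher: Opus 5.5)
Your reduction is the paper's. It uses \cref{lm:stab_crit}, the conjugation symmetry $\theta\to2\pi-\theta$, and positivity of the constant term $16(f^c+b)^2\delta^c$, leaving $E>0$ on $0<\theta\le\pi$. But that inequality is the whole content of the theorem, and the route you sketch for it does not work.

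With $X=fh^c+c$, the terms you treat as ``remaining positive'' are $hf^cX+hb(hf^c-X)$. The second of these is generally negative: it equals $-hbc$ when $[\mathcal{D}_x]=[\mathcal{D}_x^c]$. Worse, it cancels the leading-order slack of $f(f^c+b)^2-X^2$. For example, take $[\mathcal{D}_x]=[\mathcal{D}_x^c]=[\mathcal{D}_x^{(c-4)}]$ and $[\mathcal{D}_{xx}^c]=[\mathcal{D}_{xx}^{(c-2)}]$. Then $f=f^c=4(1-\cos\theta)$, $h=h^c=\sin\theta$, $b=6$, $c=6\sin\theta$, and
\begin{displaymath}
  f(f^c+b)^2-X^2=36\theta^2+O(\theta^4)\;,\qquad E=8(1-\cos\theta)^2(38-11\cos\theta)=54\theta^4+O(\theta^6)\;,
\end{displaymath}
so the $h$-terms contribute $-36\theta^2+O(\theta^4)$. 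Comparing $X\approx6\theta$ with $\sqrt{f}(f^c+b)\approx6\sqrt2\,\theta$ is therefore the wrong target. Any successful argument must track the $h$-terms exactly through order $\theta^2$; an unspecified AM--GM grouping after that comparison cannot be expected to do so. Your concavity-in-$c$ fallback is a legitimate reduction. However, the two endpoint inequalities are of exactly the same delicate type, and you leave them unproved.

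The missing idea is to complete the square in the $h$-terms. With $t=\sqrt{f+\frac14h^2}$,
\begin{displaymath}
  E=t^2(f^c+b)^2-\Bigl[fh^c+c-\tfrac12h(f^c-b)\Bigr]^2\;.
\end{displaymath}
Put $\rho=t-\frac12h$, so that $\rho(\rho+h)=f$ and $t+\frac12h=f/\rho$. Since $X\ge0$, $E>0$ is then equivalent to the \emph{linear} inequality $fh^c+c<\rho\,b+\frac{f}{\rho}f^c$. In the example this reads $6\theta+\theta^3<6\theta+4\theta^3$ to leading order.

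The two-sided bound $\omega(\omega+h)<f<\theta(\theta+h)$ from \cref{lm:stab_ade_dx} gives $\omega<\rho<\theta$. Hence $c<\omega b<\rho b$ by \cref{lm:stab_ade_dxx}. Also $fh^c<ff^c/\theta<ff^c/\rho$, using $f^c>\theta h^c$, which is \cref{lm:stab_ade_dx} applied to the central operator $[\mathcal{D}_x^c]$. This is the paper's argument. Note that it is $f^c>\theta h^c$, not the bound $f^c>\omega(\omega+h^c)$ you invoked, that handles the $fh^c$ term without any further estimate on $f^c$.
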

\begin{proof}
  By previous derivation, we just need to show:
  \begin{equation}\label{eq:stab_ade_coef2}
    f(f^c+b)^2+(fh^c+c+hb)(hf^c-c-fh^c) > 0
  \end{equation}
  for all $0<\theta<2\pi$.

  It is easy to see $f, f^c, b$ are cosine series and therefore even, and $h, h^c, c$ are sine series and odd, hence the left hand side of~\cref{eq:stab_ade_coef2} is even and by symmetry we only need to consider $0<\theta\le\pi$. 

  We re-arrange the terms of~\cref{eq:stab_ade_coef2} next, which is somehow inspired by the original coefficients of $\Delta(x,s)$.
  Let $t=\sqrt{f+\frac{1}{4}h^2}$ (this is $-c_2/4$ defined earlier), then one has:
  \begin{align}
    \notag
     &\ f(f^c+b)^2+(fh^c+c+hb)(hf^c-c-fh^c) \\
    \notag
    =&\ (f+\frac{1}{4}h^2)(f^c+b)^2-(fh^c+c)^2+(hf^c-hb)(fh^c+c)-\frac{1}{4}h^2(f^c-b)^2 \\
    \notag
    =&\ t^2(f^c+b)^2-\left[fh^c+c-\frac{1}{2}h(f^c-b)\right]^2 \\
    \label{eq:stab_ade_coef2_equiv}
    =&\ t^2(f^c+b)^2-\left[f^c\left(\frac{fh^c}{f^c}-\frac{1}{2}h\right)+b\left(\frac{c}{b}+\frac{1}{2}h\right)\right]^2\;.
  \end{align}
  To estimate the last term, we will first define $\rho=t-\frac{1}{2}h>0$, then:
  \begin{displaymath}
    \rho(\rho+h) = f > \omega(\omega+h)\quad\Longrightarrow\quad\rho> \omega
  \end{displaymath}
  where we used~\cref{lm:stab_ade_dx}.
  By~\cref{eq:stab_ade_dxx}, we have:
  \begin{displaymath}
    t-\frac{1}{2}h = \rho > \omega > \frac{c}{b}\quad\Longrightarrow\quad\frac{c}{b}+\frac{1}{2}h < t\;.
  \end{displaymath}
  For the other term in the square bracket of~\cref{eq:stab_ade_coef2_equiv}, we note again by~\cref{lm:stab_ade_dx}:
  \begin{displaymath}
    \theta(\theta+h) > f = \rho(\rho+h) \quad\Longrightarrow\quad \theta > \rho\;,
  \end{displaymath}
  thus using~\cref{lm:stab_ade_dxx} one has:
  \begin{displaymath}
    \frac{fh^c}{f^c} < \frac{f}{\theta} < \frac{f}{\rho} = \frac{f}{\sqrt{f+\frac{1}{4}h^2}-\frac{1}{2}h} = \sqrt{f+\frac{1}{4}h^2}+\frac{1}{2}h = t + \frac{1}{2}h\;.
  \end{displaymath}
  Therefore:
  \begin{displaymath}
    -t < -\frac{1}{2}h < \frac{fh^c}{f^c}-\frac{1}{2}h < t\quad\Longrightarrow\quad \abs{\frac{fh^c}{f^c}-\frac{1}{2}h} < t\;.
  \end{displaymath}
  To this end, we can finally continue from~\cref{eq:stab_ade_coef2_equiv} and derive:
  \begin{align*}
     &\ f(f^c+b)^2+(fh^c+c+hb)(hf^c-c-fh^c) \\
    \ge&\ t^2(f^c+b)^2 - \left[f^c\abs{\frac{fh^c}{f^c}-\frac{1}{2}h}+b\left(\frac{c}{b}+\frac{1}{2}h\right)\right]^2 \\
    >& t^2(f^c+b)^2-(f^ct+bt)^2 = 0\,
  \end{align*}
  which completes the proof.
\end{proof}

%\section{Artificial Viscosity}
%\label{sec:av}
%\input{section_av.tex}

\section{Numerical Examples}
\label{sec:num}
In this section we present numerical examples verifying the theoretical order of accuracy and stability.
Particularly, we shall verify the spatial order of accuracy (\cref{thm:acry}) by pairing a selection of HV-DDOs with an explicit time integrator.
%these tests also verify the conditional stability of fully discretized HV methods.
Next, the stability of sample central HV methods for both diffusion and advection-diffusion equations will be verified by plotting the trajectory $\mathcal{S}(\Pe)$, which all eigenvalues associated with the ODE system~\cref{eq:prelim_hv_semi_cell} lie on. 

\subsection{Accuracy verification of fully-discretized HV methods}
\label{sec:num_full}
Let the HV method be constructed by a $P_1^\powth\textrm{\sc-}$order $[\mathcal{D}_x]$, a $P_2^\powth\textrm{\sc-}$order $[\mathcal{D}_x^c]$, and a $P_3^\powth\textrm{\sc-}$order $[\mathcal{D}_{xx}^c]$, then~\cref{thm:acry} predicts that the spatial order of accuracy is $\min(P_1+2,P_2,P_3+2)$.
To this end, we consider below four categories of tests: (1) $P_1+2<\min(P_2,P_3+2)$, (2) $P_2<\min(P_1+2,P_3+2)$, (3) $P_3+2<\min(P_1+2,P_2)$, and (4) $P_1+2=P_2=P_3+2$.
Note that the last one includes combinations that are optimal to achieve a certain spatial order.
In all categories, we consider tests with both upwind-baised $[\mathcal{D}_x]$ and central $[\mathcal{D}_x]$.
The operator $[\mathcal{D}_x]$ is given by either~\cref{tb:acry_dx} or~\cref{tb:acry_dxc}, $[\mathcal{D}_x^c]$ is chosen from~\cref{tb:acry_dxc}, and $[\mathcal{D}_{xx}^c]$ is selected from~\cref{tb:acry_dxx}.

The following model problem is solved by all tests:
\begin{equation}\label{eq:num_full_eqn}
  w_t + w_x - 0.01\,w_{xx} = 0\;,\quad (x,t)\in[0,\ 1]^2\;.
\end{equation}
As before, periodic boundary condition is considered and the initial data is given by an Gaussian pulse:
\begin{equation}\label{eq:num_full_ic}
  w(x,0) = \exp\left(-100(x-\hf)^2\right)\;.
\end{equation}
Since this article focuses on the spatial order of accuracy, all HV discretizations are paired with the standard explicit second-order Runge-Kutta scheme and a sufficiently small time step size $\Delta t$.
For each combination of HV operators, $\Delta t$ is fixed to $10^{-5}$, which is sufficiently small to construct stable fully discretized schemes using the chosen grids; and we compute the numerical solutions on a sequence of $6$ uniform grids with number of cells given by $32$, $64$, $128$, $256$, $512$, and $1024$.

Due to the unknown temporal discretization error, there is no exact solution to compute the numerical errors; thus we adopt the Richardson extrapolation strategy (see for example~\cite{EIsaacson:1994a}) and compute the order of convergence using three consecutive grids as described below.
To illustrate the Richardson extrapolation method, let $w(h)$ be the approximation to some scalar quantity computed using the parameter $h$, and $w(0)$ be the limit of $w(h)$ as $h\to0$, then for a $P^\powth\textrm{\sc-}$order method one has:
\begin{displaymath}
  w(h) = w(0) + Ch^P + O(h^{P+1})
\end{displaymath}
for some non-zero constant $C$ that is independent of $h$.
Computing the numerical solutions on a sequence of grids with cell size $h$, $h/2$, and $h/4$, one has:
\begin{displaymath}
  w(h) \approx w(0) + Ch^P\;,\quad
  w(h/2) \approx w(0) + \frac{C}{2^P}h^P\;,\quad
  w(h/4) \approx w(0) + \frac{C}{2^{2P}}h^P\;;
\end{displaymath}
therefore one can estimate $P$ as:
\begin{equation}\label{eq:num_full_richard}
  \frac{w(h)-w(h/2)}{w(h/2)-w(h/4)} \approx \frac{h^P-2^{-P}h^P}{2^{-P}h^P-2^{-2P}h^P} = 2^P\quad\Rightarrow\quad
  P \approx \log_2\frac{w(h)-w(h/2)}{w(h/2)-w(h/4)}\;.
\end{equation}
In our case, instead of computing $w(h)$ we compute directly $w(h)-w(h/2)$ using two consecutive grids, and then the order of convergence can be estimated using~\cref{eq:num_full_richard}$_2$.
For example, let the numerical solutions to~\cref{eq:num_full_eqn} at $T=1$ computed using a grid with $h$ be denoted:
\begin{displaymath}
  w^h_0,\cdots,w^h_{N_h}\;;\quad
  \overline{w}^h_{1/2},\cdots,\overline{w}^h_{\mhf{N_h}}\;,
\end{displaymath}
where $N_h=1/h$ is the number of cells, then we can estimate the $L^\infty$-errors in nodal and cell-averaged solutions using two grids with cell sizes $h$ and $h/2$ as:
\begin{equation}\label{eq:num_full_err_linf}
  E^\infty_{h,h/2} = \max_{0\le j<N_h}\abs{w^h_j-w^{h/2}_{2j}}\;,\quad
  \overline{E}^\infty_{h,h/2} = \max_{0\le j<N_h}\abs{\overline{w}^h_{\phf{j}}-\frac{\overline{w}^{h/2}_{\phf{2j}}+\overline{w}^{h/2}_{2j+3/2}}{2}}\;, 
\end{equation}
and the $L^1$-errors are computed as:
\begin{equation}\label{eq:num_full_err_lone}
  E^1_{h,h/2} = \sum_{j=0}^{N_h-1}h\abs{w^h_j-w^{h/2}_{2j}}\;,\quad
  \overline{E}^1_{h,h/2} = \sum_{j=0}^{N_h-1}h\abs{\overline{w}^h_{\phf{j}}-\frac{\overline{w}^{h/2}_{\phf{2j}}+\overline{w}^{h/2}_{2j+3/2}}{2}}\;.
\end{equation}
In all tests below, we will report these error estimates as well as the convergence orders computed using~\cref{eq:num_full_richard}; for example, the $L^1$ convergence rate in nodal solutions computed using three grids with cell sizes $4h$, $2h$, and $h$ is given by $\log_2\left(E^1_{4h,2h}/E^1_{2h,h}\right)$.

\begin{remark}\label{rm:num_ic}
  In all tests the initial values for cell-averaged variables are computed using Gaussian-Legendre rules with order of accuracy that is at least two orders higher than the one predicted by~\cref{thm:acry}.
\end{remark}

\subsubsection{Category 1: $P_1+2<\min(P_2,P_3+2)$}
\label{sec:num_full_p1}
Three tests are considered in this category:
\begin{align}
  \label{eq:num_full_p1_142}
  &[\mathcal{D}_x] = [\mathcal{D}_x^{(1)}]\;, &&[\mathcal{D}_x^c] = [\mathcal{D}_x^{(c-4)}]\;, &&[\mathcal{D}_{xx}^c] = [\mathcal{D}_{xx}^{(c-2)}]\;; \\
  \label{eq:num_full_p1_c264}
  &[\mathcal{D}_x] = [\mathcal{D}_x^{(c-2)}]\;, &&[\mathcal{D}_x^c] = [\mathcal{D}_x^{(c-6)}]\;, &&[\mathcal{D}_{xx}^c] = [\mathcal{D}_{xx}^{(c-4)}]\;; \\
  \label{eq:num_full_p1_364}
  &[\mathcal{D}_x] = [\mathcal{D}_x^{(3)}]\;, &&[\mathcal{D}_x^c] = [\mathcal{D}_x^{(c-6)}]\;, &&[\mathcal{D}_{xx}^c] = [\mathcal{D}_{xx}^{(c-4)}]\;.
\end{align}
The estimated errors and convergence rates are summarized in~\cref{tb:num_full_p1_1}--\cref{tb:num_full_p1_3}.
We can see that the convergence orders in both nodal solutions and cell-averaged solutions computed using both $L^1$-errors and $L^\infty$-errors are $P_1+2$, as predicted by~\cref{thm:acry}.
\begin{table}\centering
  \caption{Estimated $L^1$- and $L^\infty$-errors and convergence rates for the HV method~\cref{eq:num_full_p1_142}: $P_1=1$, $P_2=4$, $P_3=2$, thus $\min(P_1+2,P_2,P_3+2)=3$.}
  \label{tb:num_full_p1_1}
  \begin{tabular}{@{}llcclcclcclc@{}}
    \toprule[.5mm]
             & \multicolumn{2}{c}{Nodal solution ($L^1$)} &
             & \multicolumn{2}{c}{Cell averages ($L^1$)} &
             & \multicolumn{2}{c}{Nodal solution ($L^\infty$)} &
             & \multicolumn{2}{c}{Cell averages ($L^\infty$)} \\ \cmidrule[.2mm]{2-3} \cmidrule[.2mm]{5-6} \cmidrule[.2mm]{8-9} \cmidrule[.2mm]{11-12}
      $h$    & Error & ~Order~ & $\ $ & Error & ~Order~ & $\ $ & Error & ~Order~ & $\ $ & Error & ~Order~ \\ \cmidrule[.3mm]{2-12}
      1/64   & 4.1705e-3 &      & & 4.1999e-3 &      & & 1.0304e-2 &      & & 1.0318e-2 &      \\
      1/128  & 7.4632e-4 & 2.48 & & 7.5732e-4 & 2.47 & & 1.8425e-3 & 2.48 & & 1.9183e-3 & 2.43 \\
      1/256  & 1.1508e-4 & 2.70 & & 1.1676e-4 & 2.70 & & 2.8354e-4 & 2.70 & & 2.9810e-4 & 2.69 \\
      1/512  & 1.6204e-5 & 2.83 & & 1.6439e-5 & 2.83 & & 3.9942e-5 & 2.83 & & 4.2015e-5 & 2.83 \\
      1/1024 & 2.1603e-6 & 2.91 & & 2.1913e-6 & 2.91 & & 5.3255e-6 & 2.91 & & 5.6017e-6 & 2.91 \\
    \bottomrule[.5mm]
  \end{tabular}
\end{table}
\begin{table}\centering
  \caption{Estimated $L^1$- and $L^\infty$-errors and convergence rates for the HV method~\cref{eq:num_full_p1_c264}: $P_1=2$, $P_2=6$, $P_3=4$, thus $\min(P_1+2,P_2,P_3+2)=4$.}
  \label{tb:num_full_p1_2}
  \begin{tabular}{@{}llcclcclcclc@{}}
    \toprule[.5mm]
             & \multicolumn{2}{c}{Nodal solution ($L^1$)} &
             & \multicolumn{2}{c}{Cell averages ($L^1$)} &
             & \multicolumn{2}{c}{Nodal solution ($L^\infty$)} &
             & \multicolumn{2}{c}{Cell averages ($L^\infty$)} \\ \cmidrule[.2mm]{2-3} \cmidrule[.2mm]{5-6} \cmidrule[.2mm]{8-9} \cmidrule[.2mm]{11-12}
      $h$    & Error & ~Order~ & $\ $ & Error & ~Order~ & $\ $ & Error & ~Order~ & $\ $ & Error & ~Order~ \\ \cmidrule[.3mm]{2-12}
      1/64   & 3.9037e-4 &      & & 4.0105e-4 &      & & 1.0599e-3 &      & & 1.0984e-3 &      \\
      1/128  & 2.4676e-5 & 3.98 & & 2.5106e-5 & 4.00 & & 6.7319e-5 & 3.98 & & 6.8598e-5 & 4.00 \\
      1/256  & 1.5475e-6 & 4.00 & & 1.5726e-6 & 4.00 & & 4.2324e-6 & 3.99 & & 4.2966e-6 & 4.00 \\
      1/512  & 9.6729e-8 & 4.00 & & 9.8262e-8 & 4.00 & & 2.6474e-7 & 4.00 & & 2.6872e-7 & 4.00 \\
      1/1024 & 6.0468e-9 & 4.00 & & 6.1421e-9 & 4.00 & & 1.6548e-8 & 4.00 & & 1.6799e-8 & 4.00 \\
    \bottomrule[.5mm]
  \end{tabular}
\end{table}
\begin{table}\centering
  \caption{Estimated $L^1$- and $L^\infty$-errors and convergence rates for the HV method~\cref{eq:num_full_p1_364}: $P_1=3$, $P_2=6$, $P_3=4$, thus $\min(P_1+2,P_2,P_3+2)=5$.}
  \label{tb:num_full_p1_3}
  \begin{tabular}{@{}llcclcclcclc@{}}
    \toprule[.5mm]
             & \multicolumn{2}{c}{Nodal solution ($L^1$)} &
             & \multicolumn{2}{c}{Cell averages ($L^1$)} &
             & \multicolumn{2}{c}{Nodal solution ($L^\infty$)} &
             & \multicolumn{2}{c}{Cell averages ($L^\infty$)} \\ \cmidrule[.2mm]{2-3} \cmidrule[.2mm]{5-6} \cmidrule[.2mm]{8-9} \cmidrule[.2mm]{11-12}
      $h$    & Error & ~Order~ & $\ $ & Error & ~Order~ & $\ $ & Error & ~Order~ & $\ $ & Error & ~Order~ \\ \cmidrule[.3mm]{2-12}
      1/64   & 2.0306e-5  &      & & 2.0869e-5  &      & & 5.1358e-5  &      & & 5.2705e-5  &      \\
      1/128  & 8.0307e-7  & 4.66 & & 8.1688e-7  & 4.68 & & 2.0229e-6  & 4.67 & & 2.0978e-6  & 4.65 \\
      1/256  & 2.8608e-8  & 4.81 & & 2.9070e-8  & 4.81 & & 7.2108e-8  & 4.81 & & 7.4600e-8  & 4.81 \\
      1/512  & 9.6070e-10 & 4.90 & & 9.7589e-10 & 4.90 & & 2.4232e-9  & 4.90 & & 2.5053e-9  & 4.90 \\
      1/1024 & 3.1180e-11 & 4.95 & & 3.1669e-11 & 4.95 & & 7.8648e-11 & 4.95 & & 8.1327e-11 & 4.95 \\
    \bottomrule[.5mm]
  \end{tabular}
\end{table}

\subsubsection{Category 2: $P_2<\min(P_1+2,P_3+2)$}
\label{sec:num_full_p2}
Three tests are considered in this category:
\begin{align}
  \label{eq:num_full_p2_122}
  &[\mathcal{D}_x] = [\mathcal{D}_x^{(1)}]\;, &&[\mathcal{D}_x^c] = [\mathcal{D}_x^{(c-2)}]\;, &&[\mathcal{D}_{xx}^c] = [\mathcal{D}_{xx}^{(c-2)}]\;; \\
  \label{eq:num_full_p2_c222}
  &[\mathcal{D}_x] = [\mathcal{D}_x^{(c-2)}]\;, &&[\mathcal{D}_x^c] = [\mathcal{D}_x^{(c-2)}]\;, &&[\mathcal{D}_{xx}^c] = [\mathcal{D}_{xx}^{(c-2)}]\;; \\
  \label{eq:num_full_p2_344}
  &[\mathcal{D}_x] = [\mathcal{D}_x^{(3)}]\;, &&[\mathcal{D}_x^c] = [\mathcal{D}_x^{(c-4)}]\;, &&[\mathcal{D}_{xx}^c] = [\mathcal{D}_{xx}^{(c-4)}]\;.
\end{align}
The estimated errors and convergence rates are summarized in~\cref{tb:num_full_p2_1}--\cref{tb:num_full_p2_3}; and the convergence rates are all $P_2$, confirming~\cref{thm:acry}.
\begin{table}\centering
  \caption{Estimated $L^1$- and $L^\infty$-errors and convergence rates for the HV method~\cref{eq:num_full_p2_122}: $P_1=1$, $P_2=2$, $P_3=2$, thus $\min(P_1+2,P_2,P_3+2)=2$.}
  \label{tb:num_full_p2_1}
  \begin{tabular}{@{}llcclcclcclc@{}}
    \toprule[.5mm]
             & \multicolumn{2}{c}{Nodal solution ($L^1$)} &
             & \multicolumn{2}{c}{Cell averages ($L^1$)} &
             & \multicolumn{2}{c}{Nodal solution ($L^\infty$)} &
             & \multicolumn{2}{c}{Cell averages ($L^\infty$)} \\ \cmidrule[.2mm]{2-3} \cmidrule[.2mm]{5-6} \cmidrule[.2mm]{8-9} \cmidrule[.2mm]{11-12}
      $h$    & Error & ~Order~ & $\ $ & Error & ~Order~ & $\ $ & Error & ~Order~ & $\ $ & Error & ~Order~ \\ \cmidrule[.3mm]{2-12}
      1/64   & 4.3345e-3 &      & & 4.2879e-3 &      & & 1.0950e-2 &      & & 1.0598e-2 &      \\
      1/128  & 7.6620e-4 & 2.50 & & 7.5613e-4 & 2.50 & & 1.9518e-3 & 2.49 & & 1.8813e-3 & 2.49 \\
      1/256  & 1.2167e-4 & 2.65 & & 1.1884e-4 & 2.67 & & 3.1335e-4 & 2.64 & & 2.9549e-4 & 2.67 \\
      1/512  & 1.8956e-5 & 2.68 & & 1.8263e-5 & 2.70 & & 5.0349e-5 & 2.64 & & 4.7177e-5 & 2.65 \\
      1/1024 & 3.2778e-6 & 2.53 & & 3.1404e-6 & 2.54 & & 9.1028e-6 & 2.47 & & 8.5858e-6 & 2.46 \\
    \bottomrule[.5mm]
  \end{tabular}
\end{table}
\begin{table}\centering
  \caption{Estimated $L^1$- and $L^\infty$-errors and convergence rates for the HV method~\cref{eq:num_full_p2_c222}: $P_1=2$, $P_2=2$, $P_3=2$, thus $\min(P_1+2,P_2,P_3+2)=2$.}
  \label{tb:num_full_p2_2}
  \begin{tabular}{@{}llcclcclcclc@{}}
    \toprule[.5mm]
             & \multicolumn{2}{c}{Nodal solution ($L^1$)} &
             & \multicolumn{2}{c}{Cell averages ($L^1$)} &
             & \multicolumn{2}{c}{Nodal solution ($L^\infty$)} &
             & \multicolumn{2}{c}{Cell averages ($L^\infty$)} \\ \cmidrule[.2mm]{2-3} \cmidrule[.2mm]{5-6} \cmidrule[.2mm]{8-9} \cmidrule[.2mm]{11-12}
      $h$    & Error & ~Order~ & $\ $ & Error & ~Order~ & $\ $ & Error & ~Order~ & $\ $ & Error & ~Order~ \\ \cmidrule[.3mm]{2-12}
      1/64   & 1.5228e-3 &      & & 1.5129e-3 &      & & 4.3188e-3 &      & & 4.3524e-3 &      \\
      1/128  & 2.1789e-4 & 2.81 & & 2.1799e-4 & 2.79 & & 6.3147e-4 & 2.77 & & 6.3007e-4 & 2.79 \\
      1/256  & 4.3289e-5 & 2.33 & & 4.3273e-5 & 2.33 & & 1.2643e-4 & 2.32 & & 1.2653e-4 & 2.32 \\
      1/512  & 1.0118e-5 & 2.10 & & 1.0119e-5 & 2.10 & & 2.9652e-5 & 2.09 & & 2.9660e-5 & 2.09 \\
      1/1024 & 2.4859e-6 & 2.03 & & 2.4858e-6 & 2.03 & & 7.2910e-6 & 2.02 & & 7.2916e-6 & 2.02 \\
    \bottomrule[.5mm]
  \end{tabular}
\end{table}
\begin{table}\centering
  \caption{Estimated $L^1$- and $L^\infty$-errors and convergence rates for the HV method~\cref{eq:num_full_p2_344}: $P_1=3$, $P_2=4$, $P_3=4$, thus $\min(P_1+2,P_2,P_3+2)=4$.}
  \label{tb:num_full_p2_3}
  \begin{tabular}{@{}llcclcclcclc@{}}
    \toprule[.5mm]
             & \multicolumn{2}{c}{Nodal solution ($L^1$)} &
             & \multicolumn{2}{c}{Cell averages ($L^1$)} &
             & \multicolumn{2}{c}{Nodal solution ($L^\infty$)} &
             & \multicolumn{2}{c}{Cell averages ($L^\infty$)} \\ \cmidrule[.2mm]{2-3} \cmidrule[.2mm]{5-6} \cmidrule[.2mm]{8-9} \cmidrule[.2mm]{11-12}
      $h$    & Error & ~Order~ & $\ $ & Error & ~Order~ & $\ $ & Error & ~Order~ & $\ $ & Error & ~Order~ \\ \cmidrule[.3mm]{2-12}
      1/64   & 2.0136e-5  &      & & 2.0290e-5  &      & & 4.9760e-5  &      & & 5.1493e-5  &      \\
      1/128  & 7.9709e-7  & 4.66 & & 7.9419e-7  & 4.68 & & 2.0084e-6  & 4.63 & & 2.0272e-6  & 4.67 \\
      1/256  & 2.9806e-8  & 4.74 & & 2.9250e-8  & 4.76 & & 7.5959e-8  & 4.72 & & 7.5985e-8  & 4.74 \\
      1/512  & 1.1867e-9  & 4.65 & & 1.1459e-9  & 4.67 & & 3.1531e-9  & 4.59 & & 3.0086e-9  & 4.66 \\
      1/1024 & 5.8583e-11 & 4.34 & & 5.6785e-11 & 4.33 & & 1.6213e-10 & 4.28 & & 1.5637e-10 & 4.27 \\
    \bottomrule[.5mm]
  \end{tabular}
\end{table}

\subsubsection{Category 3: $P_3+2<\min(P_1+2,P_2)$}
\label{sec:num_full_p3}
Two tests are considered in this category:
\begin{align}
  \label{eq:num_full_p3_362}
  &[\mathcal{D}_x] = [\mathcal{D}_x^{(3)}]\;, &&[\mathcal{D}_x^c] = [\mathcal{D}_x^{(c-6)}]\;, &&[\mathcal{D}_{xx}^c] = [\mathcal{D}_{xx}^{(c-2)}]\;; \\
  \label{eq:num_full_p3_c462}
  &[\mathcal{D}_x] = [\mathcal{D}_x^{(c-4)}]\;, &&[\mathcal{D}_x^c] = [\mathcal{D}_x^{(c-6)}]\;, &&[\mathcal{D}_{xx}^c] = [\mathcal{D}_{xx}^{(c-2)}]\;.
\end{align}
The estimated errors and convergence rates are provided in~\cref{tb:num_full_p3_1} and~\cref{tb:num_full_p3_2}; and all convergence rates predicted by~\cref{thm:acry} are confirmed to be $P_3+2$.
\begin{table}\centering
  \caption{Estimated $L^1$- and $L^\infty$-errors and convergence rates for the HV method~\cref{eq:num_full_p3_362}: $P_1=3$, $P_2=6$, $P_3=2$, thus $\min(P_1+2,P_2,P_3+2)=4$.}
  \label{tb:num_full_p3_1}
  \begin{tabular}{@{}llcclcclcclc@{}}
    \toprule[.5mm]
             & \multicolumn{2}{c}{Nodal solution ($L^1$)} &
             & \multicolumn{2}{c}{Cell averages ($L^1$)} &
             & \multicolumn{2}{c}{Nodal solution ($L^\infty$)} &
             & \multicolumn{2}{c}{Cell averages ($L^\infty$)} \\ \cmidrule[.2mm]{2-3} \cmidrule[.2mm]{5-6} \cmidrule[.2mm]{8-9} \cmidrule[.2mm]{11-12}
      $h$    & Error & ~Order~ & $\ $ & Error & ~Order~ & $\ $ & Error & ~Order~ & $\ $ & Error & ~Order~ \\ \cmidrule[.3mm]{2-12}
      1/64   & 3.5752e-6 &      & & 3.6489e-6 &      & & 9.4668e-6 &      & & 9.4115e-6 &      \\
      1/128  & 1.2306e-6 & 1.54 & & 1.2529e-6 & 1.54 & & 3.1014e-6 & 1.61 & & 3.1893e-6 & 1.56 \\
      1/256  & 1.5964e-7 & 2.95 & & 1.6216e-7 & 2.95 & & 4.0234e-7 & 2.95 & & 4.1602e-7 & 2.94 \\
      1/512  & 1.3856e-8 & 3.53 & & 1.4072e-8 & 3.53 & & 3.4928e-8 & 3.53 & & 3.6107e-8 & 3.53 \\
      1/1024 & 1.0201e-9 & 3.76 & & 1.0360e-9 & 3.76 & & 2.5714e-9 & 3.76 & & 2.6590e-9 & 3.76 \\
    \bottomrule[.5mm]
  \end{tabular}
\end{table}
\begin{table}\centering
  \caption{Estimated $L^1$- and $L^\infty$-errors and convergence rates for the HV method~\cref{eq:num_full_p3_c462}: $P_1=4$, $P_2=6$, $P_3=2$, thus $\min(P_1+2,P_2,P_3+2)=4$.}
  \label{tb:num_full_p3_2}
  \begin{tabular}{@{}llcclcclcclc@{}}
    \toprule[.5mm]
             & \multicolumn{2}{c}{Nodal solution ($L^1$)} &
             & \multicolumn{2}{c}{Cell averages ($L^1$)} &
             & \multicolumn{2}{c}{Nodal solution ($L^\infty$)} &
             & \multicolumn{2}{c}{Cell averages ($L^\infty$)} \\ \cmidrule[.2mm]{2-3} \cmidrule[.2mm]{5-6} \cmidrule[.2mm]{8-9} \cmidrule[.2mm]{11-12}
      $h$    & Error & ~Order~ & $\ $ & Error & ~Order~ & $\ $ & Error & ~Order~ & $\ $ & Error & ~Order~ \\ \cmidrule[.3mm]{2-12}
      1/64   & 6.4493e-5 &      & & 6.5224e-5 &      & & 1.5854e-4 &      & & 1.6583e-4 &      \\
      1/128  & 4.8163e-6 & 3.74 & & 4.8959e-6 & 3.74 & & 1.2100e-5 & 3.71 & & 1.2512e-5 & 3.73 \\
      1/256  & 3.0732e-7 & 3.97 & & 3.1224e-7 & 3.97 & & 7.7423e-7 & 3.97 & & 8.0100e-7 & 3.97 \\
      1/512  & 1.9260e-8 & 4.00 & & 1.9562e-8 & 4.00 & & 4.8553e-8 & 4.00 & & 5.0201e-8 & 4.00 \\
      1/1024 & 1.2044e-9 & 4.00 & & 1.2232e-9 & 4.00 & & 3.0363e-9 & 4.00 & & 3.1398e-9 & 4.00 \\
    \bottomrule[.5mm]
  \end{tabular}
\end{table}

\subsubsection{Category 4: $P_1+2=P_2=P_3+2$}
\label{sec:num_full_opt}
Three tests are considered in this category:
\begin{align}
  \label{eq:num_full_opt_242}
  &[\mathcal{D}_x] = [\mathcal{D}_x^{(2)}]\;, &&[\mathcal{D}_x^c] = [\mathcal{D}_x^{(c-4)}]\;, &&[\mathcal{D}_{xx}^c] = [\mathcal{D}_{xx}^{(c-2)}]\;; \\
  \label{eq:num_full_opt_c242}
  &[\mathcal{D}_x] = [\mathcal{D}_x^{(c-2)}]\;, &&[\mathcal{D}_x^c] = [\mathcal{D}_x^{(c-4)}]\;, &&[\mathcal{D}_{xx}^c] = [\mathcal{D}_{xx}^{(c-2)}]\;; \\
  \label{eq:num_full_opt_464}
  &[\mathcal{D}_x] = [\mathcal{D}_x^{(4)}]\;, &&[\mathcal{D}_x^c] = [\mathcal{D}_x^{(c-6)}]\;, &&[\mathcal{D}_{xx}^c] = [\mathcal{D}_{xx}^{(c-4)}]\;.
\end{align}
The estimated errors and convergence rates are summarized in~\cref{tb:num_full_opt_1}--\cref{tb:num_full_opt_3}; and the convergence rates are all $P_1+2=P_2=P_3+2$, which confirm~\cref{thm:acry}.
\begin{table}\centering
  \caption{Estimated $L^1$- and $L^\infty$-errors and convergence rates for the HV method~\cref{eq:num_full_opt_242}: $P_1=1$, $P_2=2$, $P_3=2$, thus $\min(P_1+2,P_2,P_3+2)=2$.}
  \label{tb:num_full_opt_1}
  \begin{tabular}{@{}llcclcclcclc@{}}
    \toprule[.5mm]
             & \multicolumn{2}{c}{Nodal solution ($L^1$)} &
             & \multicolumn{2}{c}{Cell averages ($L^1$)} &
             & \multicolumn{2}{c}{Nodal solution ($L^\infty$)} &
             & \multicolumn{2}{c}{Cell averages ($L^\infty$)} \\ \cmidrule[.2mm]{2-3} \cmidrule[.2mm]{5-6} \cmidrule[.2mm]{8-9} \cmidrule[.2mm]{11-12}
      $h$    & Error & ~Order~ & $\ $ & Error & ~Order~ & $\ $ & Error & ~Order~ & $\ $ & Error & ~Order~ \\ \cmidrule[.3mm]{2-12}
      1/64   & 2.1933e-4 &      & & 2.2315e-4 &      & & 5.9274e-4 &      & & 6.0218e-4 &      \\
      1/128  & 2.2902e-5 & 3.26 & & 2.3247e-5 & 3.26 & & 6.2214e-5 & 3.25 & & 6.3264e-5 & 3.25 \\
      1/256  & 2.1051e-6 & 3.44 & & 2.1327e-6 & 3.45 & & 5.7394e-6 & 3.44 & & 5.8136e-6 & 3.44 \\
      1/512  & 1.7047e-7 & 3.63 & & 1.7267e-7 & 3.63 & & 4.6559e-7 & 3.62 & & 4.7148e-7 & 3.62 \\
      1/1024 & 1.2477e-8 & 3.77 & & 1.2637e-8 & 3.77 & & 3.4096e-8 & 3.77 & & 3.4517e-8 & 3.77 \\
    \bottomrule[.5mm]
  \end{tabular}
\end{table}
\begin{table}\centering
  \caption{Estimated $L^1$- and $L^\infty$-errors and convergence rates for the HV method~\cref{eq:num_full_opt_c242}: $P_1=2$, $P_2=2$, $P_3=2$, thus $\min(P_1+2,P_2,P_3+2)=2$.}
  \label{tb:num_full_opt_2}
  \begin{tabular}{@{}llcclcclcclc@{}}
    \toprule[.5mm]
             & \multicolumn{2}{c}{Nodal solution ($L^1$)} &
             & \multicolumn{2}{c}{Cell averages ($L^1$)} &
             & \multicolumn{2}{c}{Nodal solution ($L^\infty$)} &
             & \multicolumn{2}{c}{Cell averages ($L^\infty$)} \\ \cmidrule[.2mm]{2-3} \cmidrule[.2mm]{5-6} \cmidrule[.2mm]{8-9} \cmidrule[.2mm]{11-12}
      $h$    & Error & ~Order~ & $\ $ & Error & ~Order~ & $\ $ & Error & ~Order~ & $\ $ & Error & ~Order~ \\ \cmidrule[.3mm]{2-12}
      1/64   & 9.0225e-4 &      & & 9.1932e-4 &      & & 2.4348e-3 &      & & 2.4844e-3 &      \\
      1/128  & 6.1195e-5 & 3.88 & & 6.2129e-5 & 3.89 & & 1.6706e-4 & 3.87 & & 1.6971e-4 & 3.87 \\
      1/256  & 3.8629e-6 & 3.99 & & 3.9136e-6 & 3.99 & & 1.0564e-5 & 3.98 & & 1.0692e-5 & 3.99 \\
      1/512  & 2.4182e-7 & 4.00 & & 2.4490e-7 & 4.00 & & 6.6161e-7 & 4.00 & & 6.6968e-7 & 4.00 \\
      1/1024 & 1.5119e-8 & 4.00 & & 1.5310e-8 & 4.00 & & 4.1373e-8 & 4.00 & & 4.1873e-8 & 4.00 \\
    \bottomrule[.5mm]
  \end{tabular}
\end{table}
\begin{table}\centering
  \caption{Estimated $L^1$- and $L^\infty$-errors and convergence rates for the HV method~\cref{eq:num_full_opt_464}: $P_1=3$, $P_2=4$, $P_3=4$, thus $\min(P_1+2,P_2,P_3+2)=4$.}
  \label{tb:num_full_opt_3}
  \begin{tabular}{@{}llcclcclcclc@{}}
    \toprule[.5mm]
             & \multicolumn{2}{c}{Nodal solution ($L^1$)} &
             & \multicolumn{2}{c}{Cell averages ($L^1$)} &
             & \multicolumn{2}{c}{Nodal solution ($L^\infty$)} &
             & \multicolumn{2}{c}{Cell averages ($L^\infty$)} \\ \cmidrule[.2mm]{2-3} \cmidrule[.2mm]{5-6} \cmidrule[.2mm]{8-9} \cmidrule[.2mm]{11-12}
      $h$    & Error & ~Order~ & $\ $ & Error & ~Order~ & $\ $ & Error & ~Order~ & $\ $ & Error & ~Order~ \\ \cmidrule[.3mm]{2-12}
      1/64   & 2.4374e-6  &      & & 2.5099e-6  &      & & 6.9041e-6  &      & & 6.9068e-6  &      \\
      1/128  & 5.2419e-8  & 5.54 & & 5.3348e-8  & 5.56 & & 1.4821e-7  & 5.54 & & 1.5137e-7  & 5.51 \\
      1/256  & 9.9648e-10 & 5.72 & & 1.0121e-9  & 5.72 & & 2.8292e-9  & 5.71 & & 2.8736e-9  & 5.72 \\
      1/512  & 1.7423e-11 & 5.84 & & 1.7692e-11 & 5.84 & & 4.9525e-11 & 5.84 & & 5.0241e-11 & 5.84 \\
      1/1024 & 2.8944e-13 & 5.91 & & 2.9383e-13 & 5.91 & & 8.2351e-13 & 5.91 & & 8.3544e-13 & 5.91 \\
    \bottomrule[.5mm]
  \end{tabular}
\end{table}

\subsection{Stability plots of central HV methods}
\label{sec:num_cent}
In the second set of numerical examples, we demonstrate the stability of central HV schemes to solve linear diffusion equations and linear advection-diffusion equations by sketching the eigenvalue trajectories $\mathcal{S}(\Pe)$.

In the first set of plots (\cref{fg:num_cent_stab_bal}), we consider operators $[\mathcal{D}_x]$ with stencil $l'=r'=l=r$, denoted $[\mathcal{D}_x]=[\mathcal{D}^{(c-4r)}_x]$ following the convention of previous section, and $[\mathcal{D}_x^c]$ with stencil $p'=p$ and denoted $[\mathcal{D}^{(c-4p)}_x]$, and $[\mathcal{D}_{xx}^c]$ with stencil $q'=q$ and denoted $[\mathcal{D}_{xx}^{(c-4q)}]$.
We call these ``balanced'' HV schemes as all operators use the same number of cells and nodes on either side.
The details of the stability proof of the central HV method constructed using these operators are given in~\cref{sec:stab_ade} and~\cref{app:ade}.
For each combination that we picked, the trajectory $\mathcal{S}(\Pe)$ with $\Pe=0$ (diffusion-only case), $\Pe=1$, $\Pe=5$, and $\Pe=20$ are plotted.
\begin{figure}\centering
  \includegraphics[width=.48\textwidth]{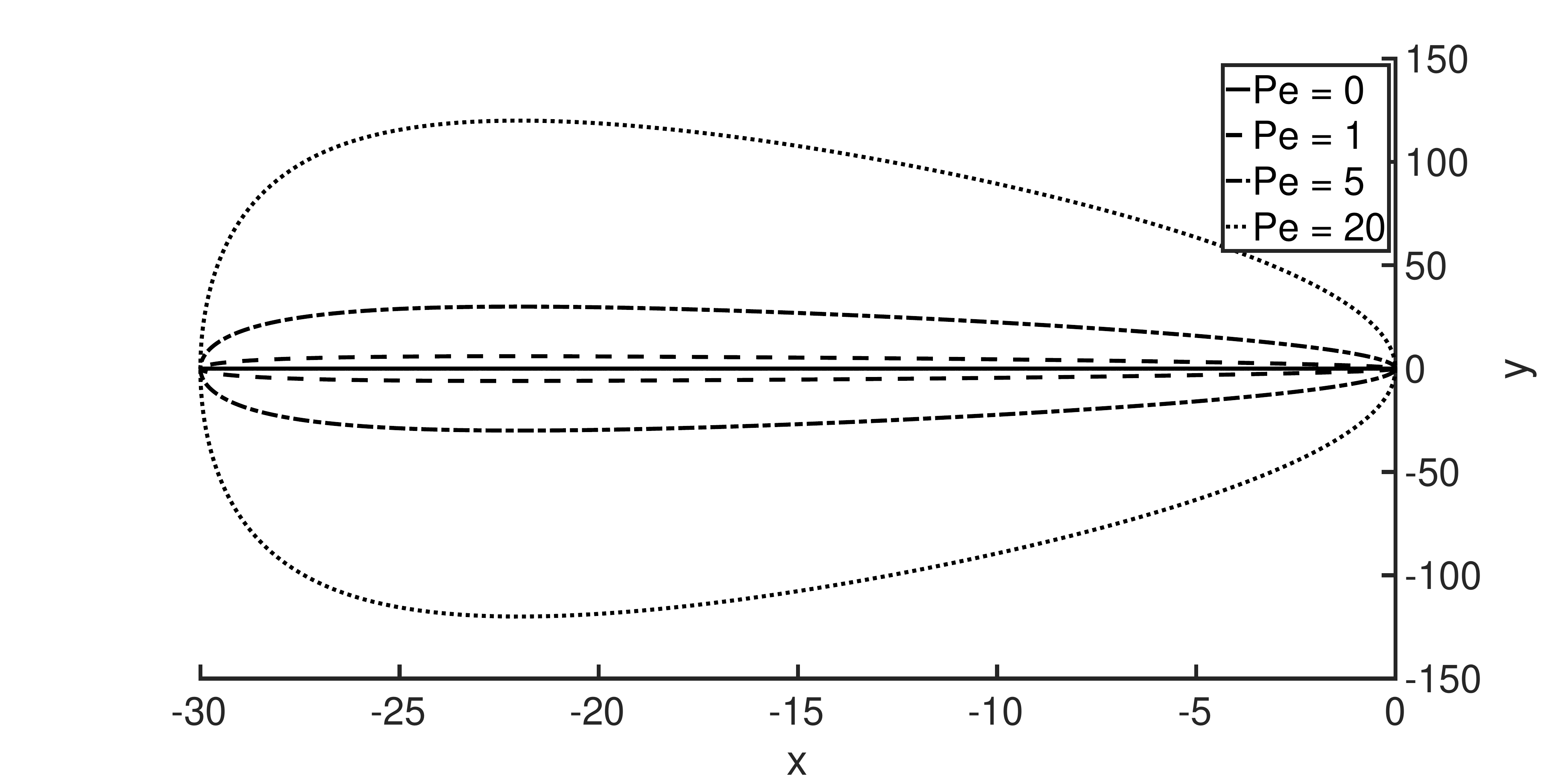} ~~
  \includegraphics[width=.48\textwidth]{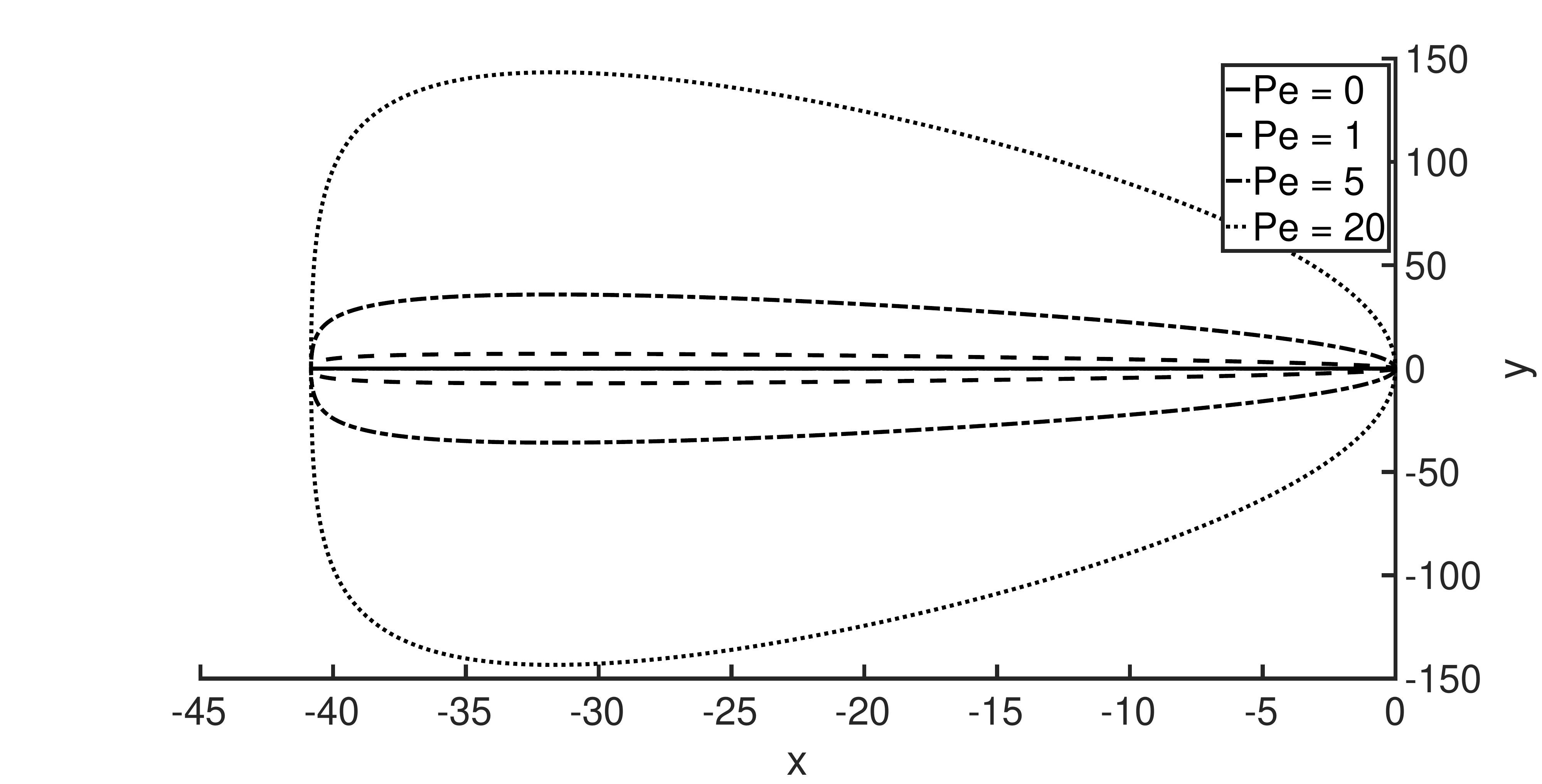} \\
  \hspace{.4cm} $[\mathcal{D}_x^{(c-4)}], [\mathcal{D}_x^{(c-4)}], [\mathcal{D}_{xx}^{(c-4)}]$ \hspace{2.4cm}
  $[\mathcal{D}_x^{(c-8)}], [\mathcal{D}_x^{(c-12)}], [\mathcal{D}_{xx}^{(c-8)}]$ \\
  \includegraphics[width=.48\textwidth]{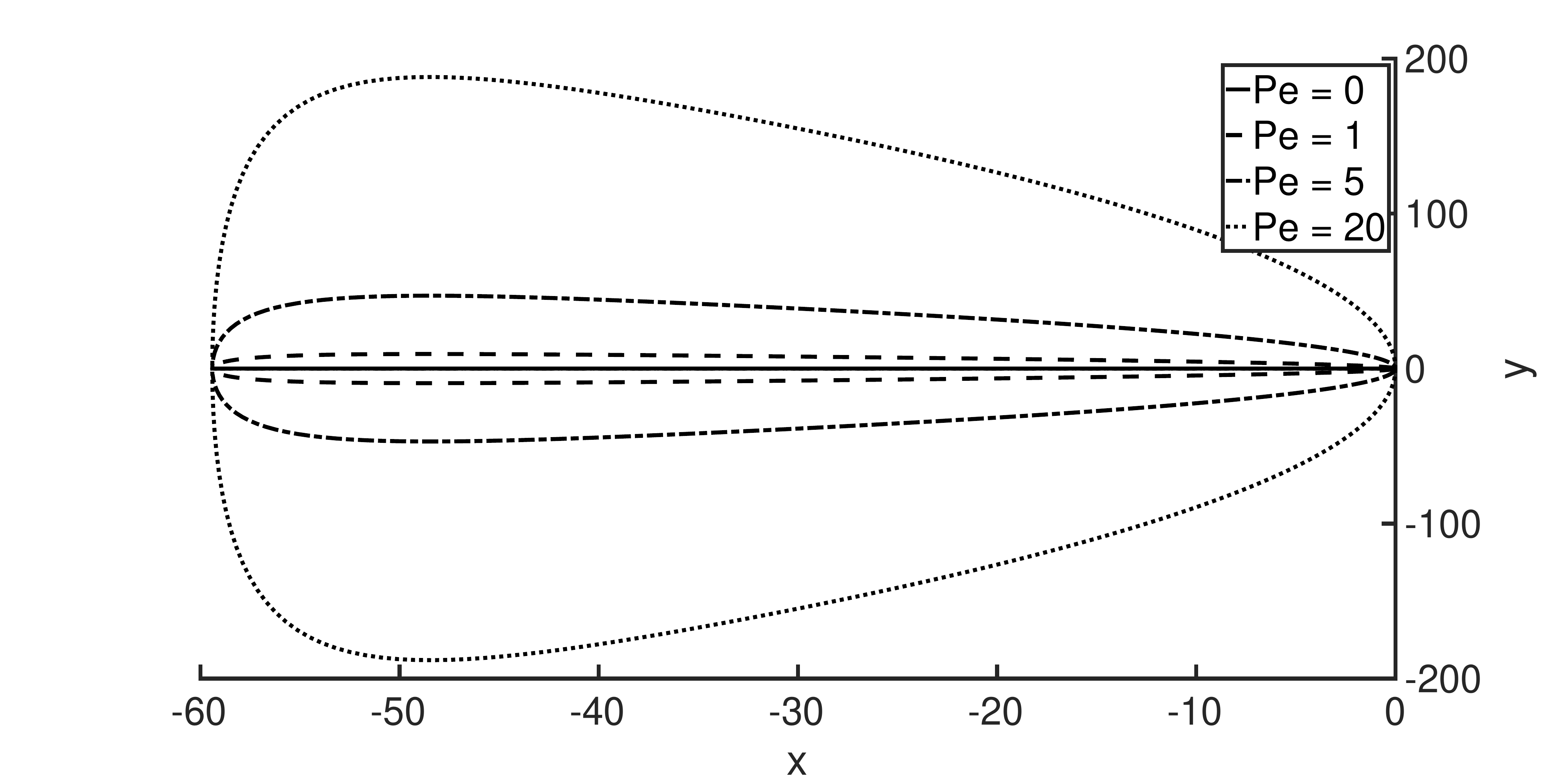} ~~
  \includegraphics[width=.48\textwidth]{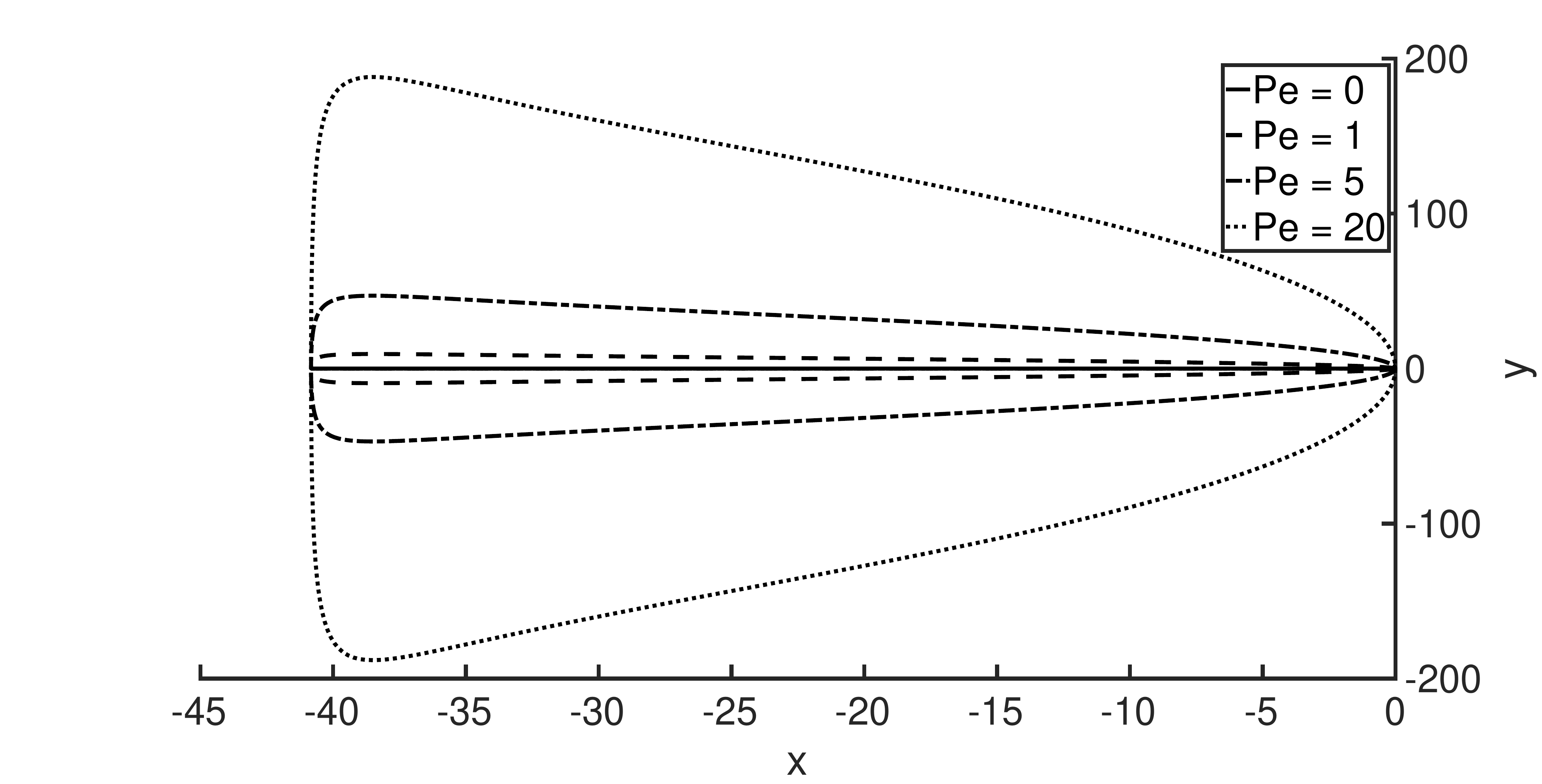} \\
  \hspace{.4cm} $[\mathcal{D}_x^{(c-36)}], [\mathcal{D}_x^{(c-40)}], [\mathcal{D}_{xx}^{(c-36)}]$ \hspace{2.4cm}
  $[\mathcal{D}_x^{(c-36)}], [\mathcal{D}_x^{(c-12)}], [\mathcal{D}_{xx}^{(c-8)}]$ \\
  \includegraphics[width=.48\textwidth]{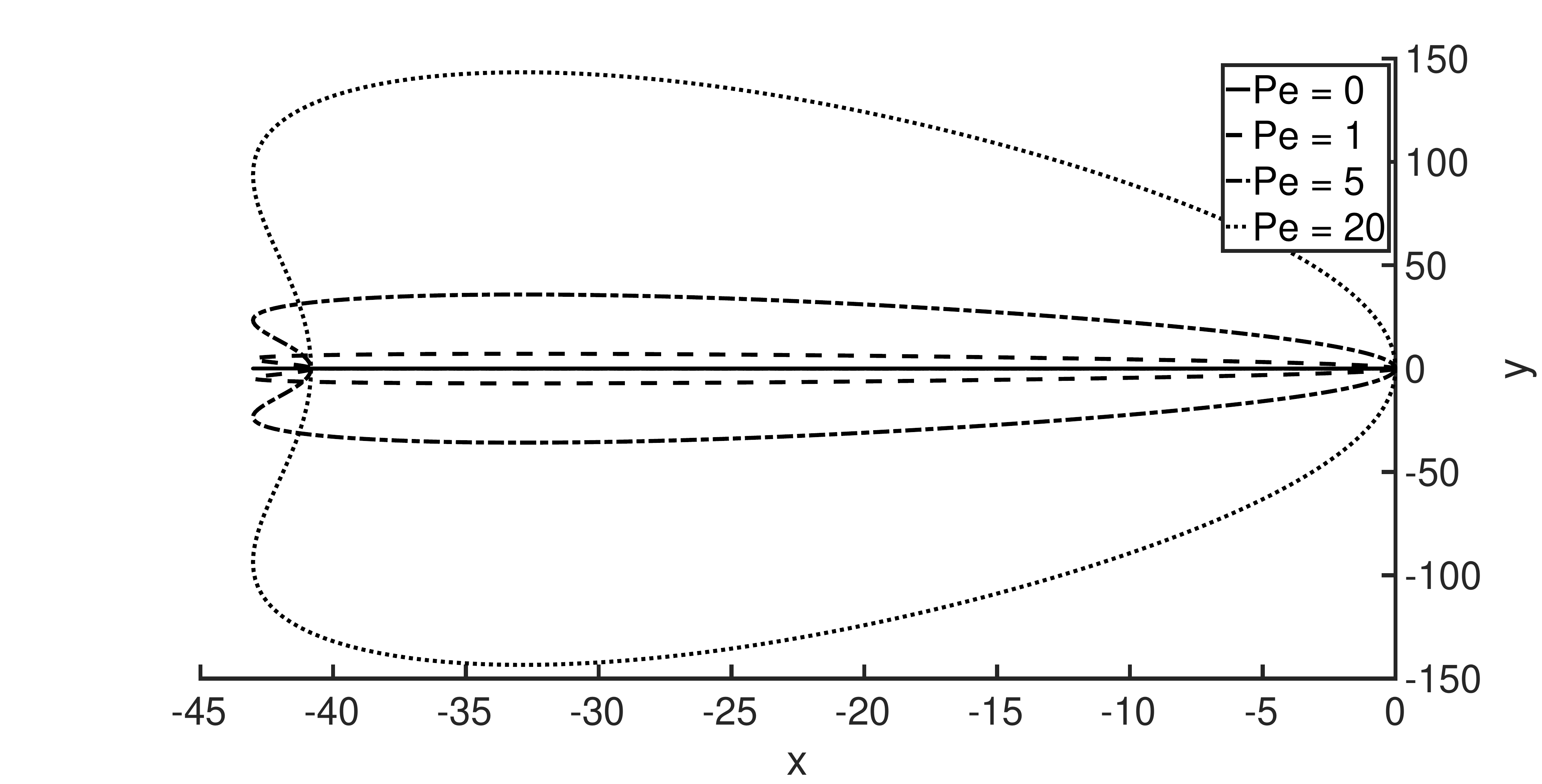} ~~
  \includegraphics[width=.48\textwidth]{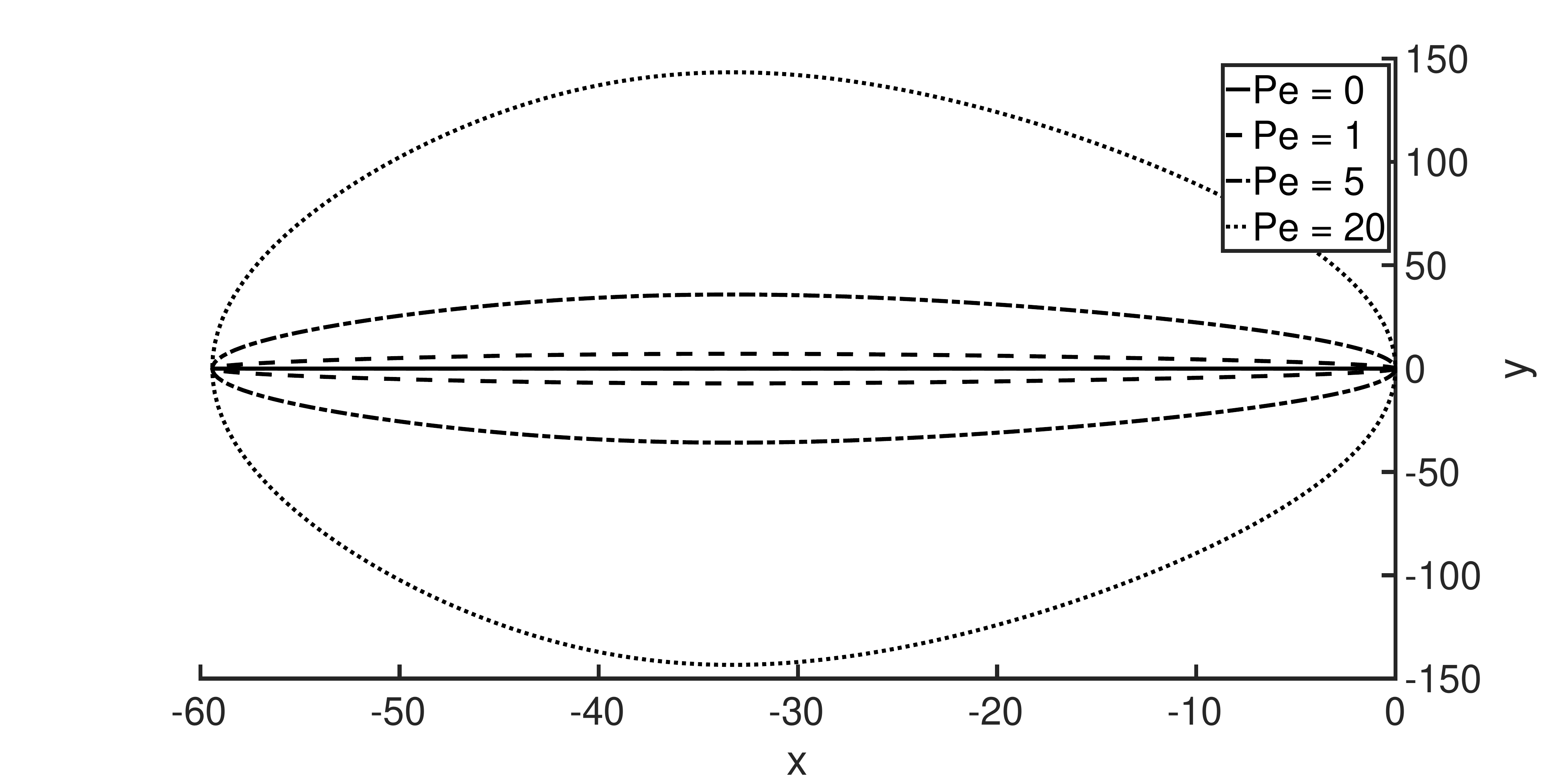} \\
  \hspace{.4cm} $[\mathcal{D}_x^{(c-8)}], [\mathcal{D}_x^{(c-40)}], [\mathcal{D}_{xx}^{(c-8)}]$ \hspace{2.4cm}
  $[\mathcal{D}_x^{(c-8)}], [\mathcal{D}_x^{(c-12)}], [\mathcal{D}_{xx}^{(c-36)}]$ 
  \caption{Trajectories $\mathcal{S}(\Pe)$ corresponding to selected balanced central HV schemes for $\Pe=0, 1, 5, 20$.
    Each central HV method is designated by the three operators in the order $[\mathcal{D}_x]$, $[\mathcal{D}_x^c]$, $[\mathcal{D}_{xx}^c]$.}
  \label{fg:num_cent_stab_bal}
\end{figure}
We observe from the plots that the entire trajectories all lie in the left complex plane, confirming the stability that we proved.

In the second set of plots we blend in some ``unblanced'' operators such as $[\mathcal{D}_x]=[\mathcal{D}_x^{(c-10)}]$, which is defined by $l=r=3$ and $l'=r'=2$.
Although the details of the stability proof of such central HV schemes were not provided (due to their similarity with the case of balanced HV methods), we see that their $\mathcal{S}(\Pe)$ again lies to the left of the imaginary axis and thus the semi-discretized method is stable.
\begin{figure}\centering
  \includegraphics[width=.48\textwidth]{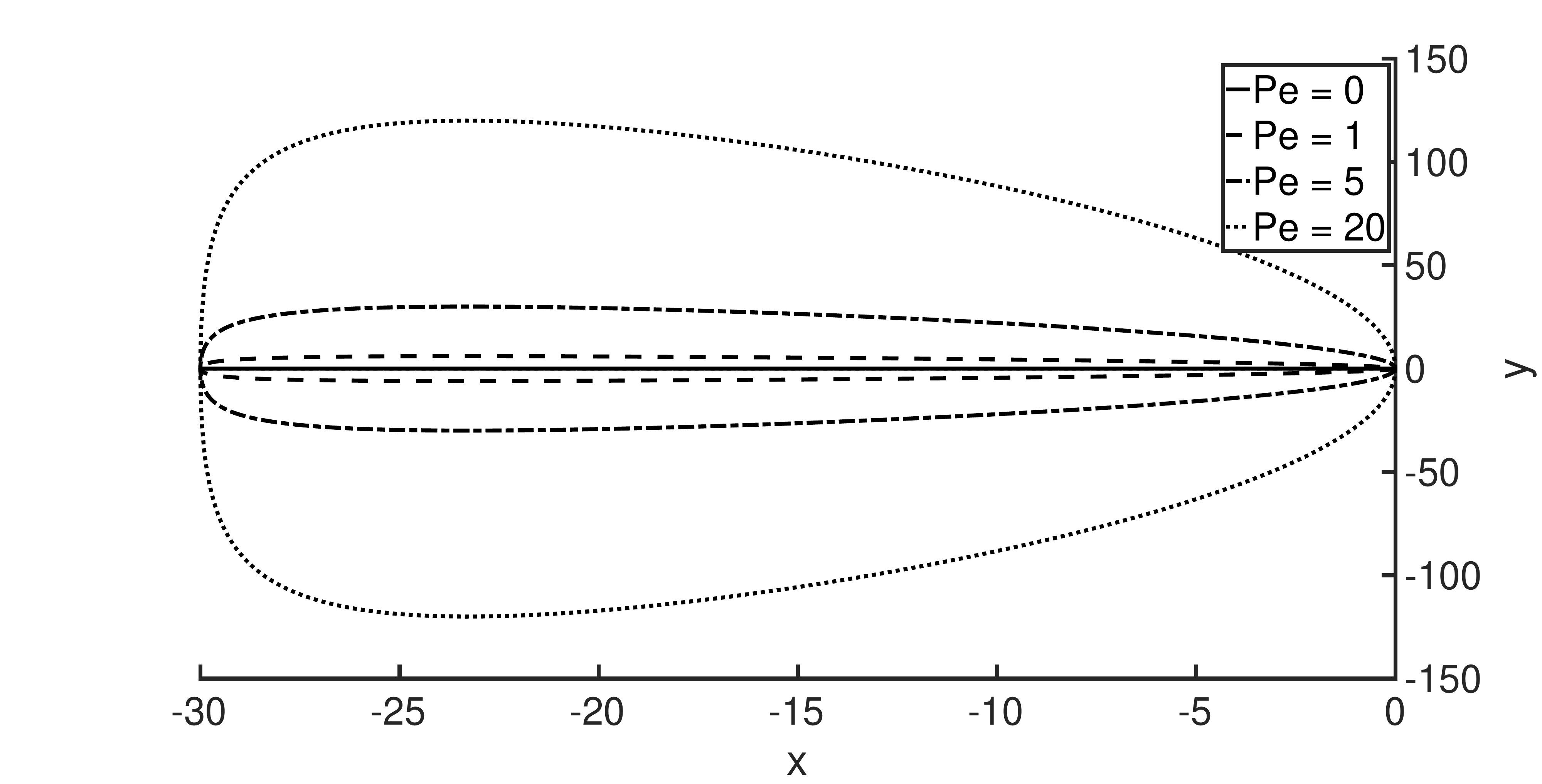} ~~
  \includegraphics[width=.48\textwidth]{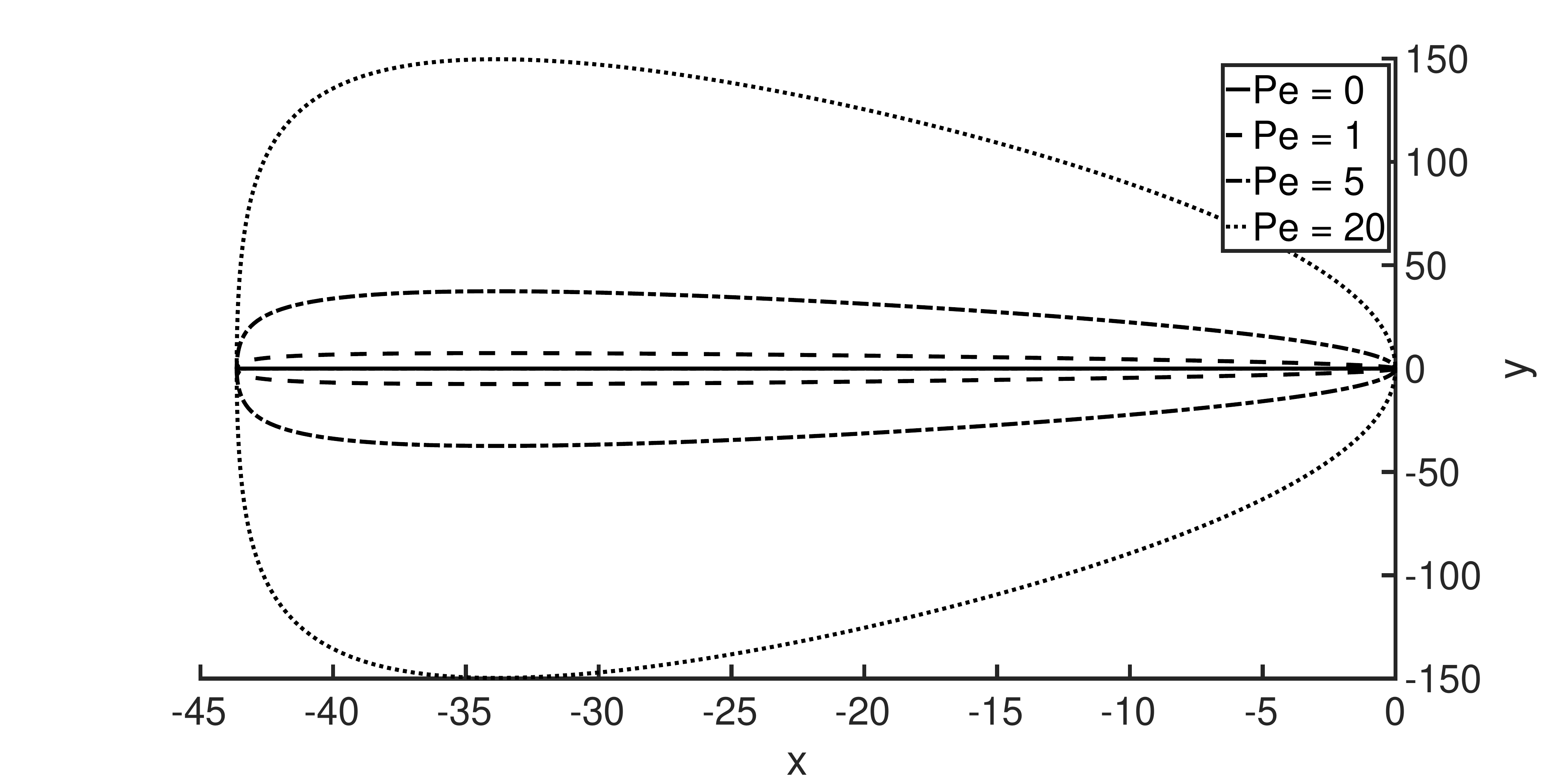} \\
  \hspace{.4cm} $[\mathcal{D}_x^{(c-4)}], [\mathcal{D}_x^{(c-6)}], [\mathcal{D}_{xx}^{(c-4)}]$ \hspace{2.4cm}
  $[\mathcal{D}_x^{(c-10)}], [\mathcal{D}_x^{(c-14)}], [\mathcal{D}_{xx}^{(c-10)}]$ \\
  \includegraphics[width=.48\textwidth]{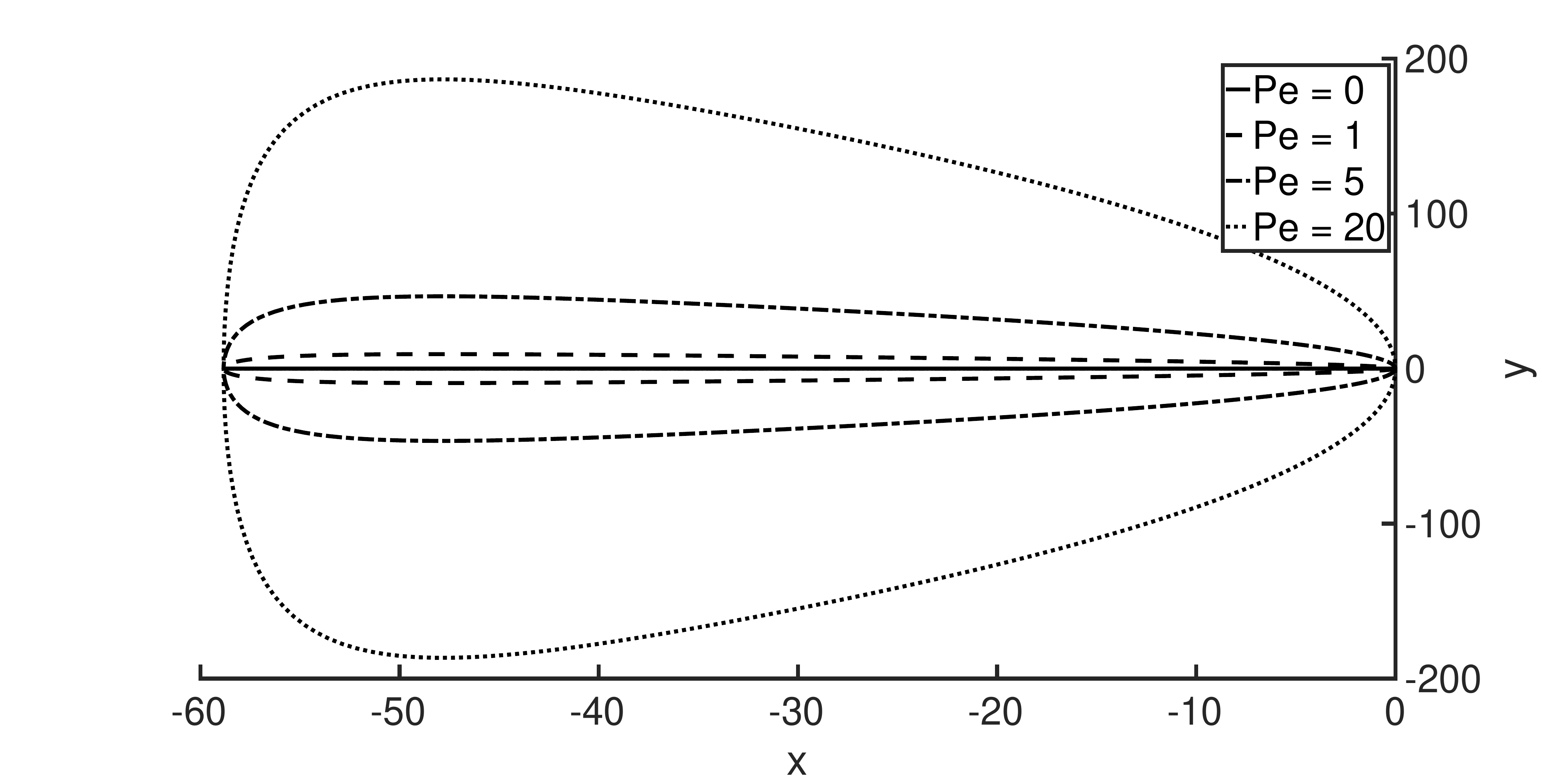} ~~
  \includegraphics[width=.48\textwidth]{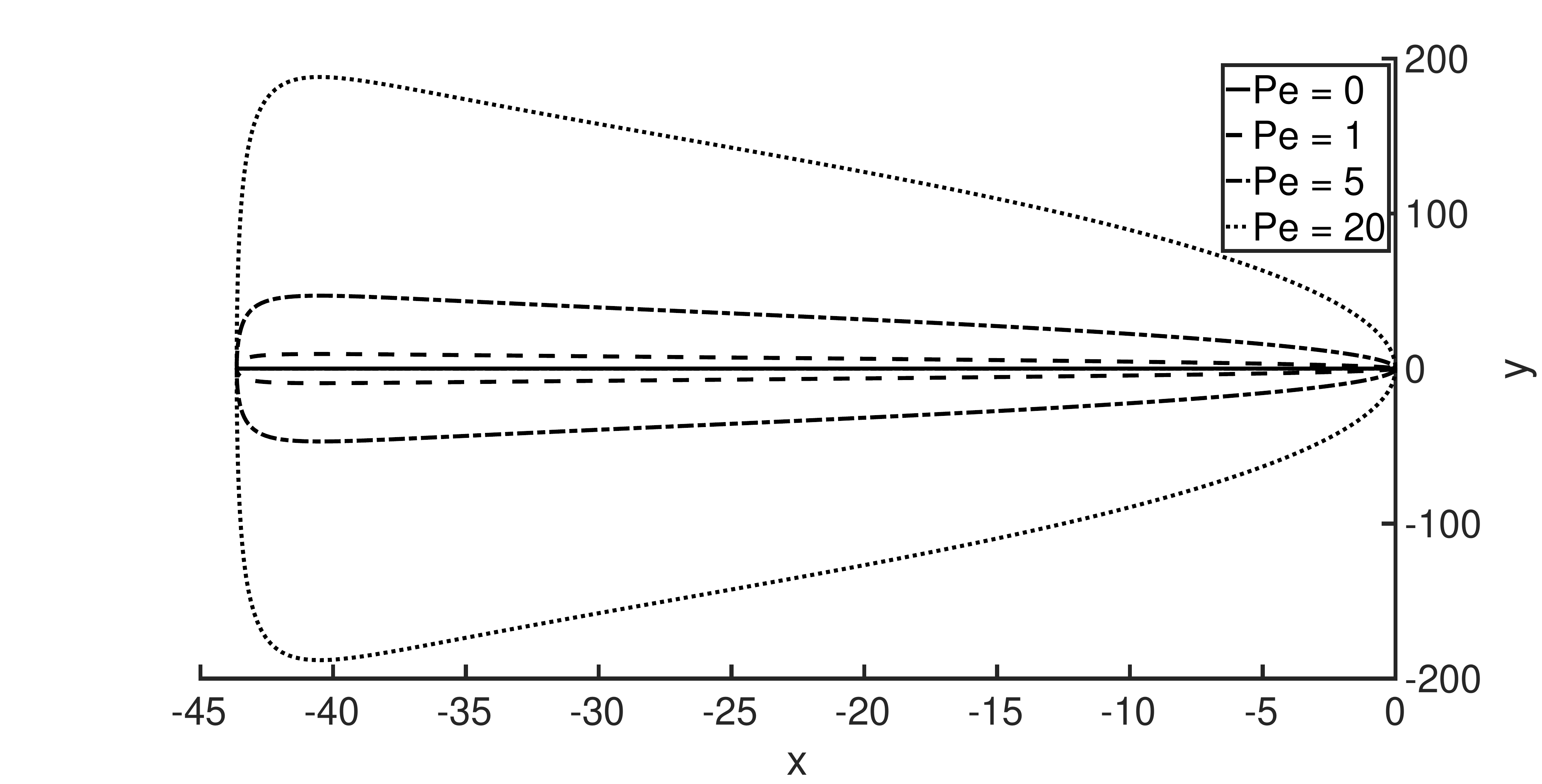} \\
  \hspace{.4cm} $[\mathcal{D}_x^{(c-34)}], [\mathcal{D}_x^{(c-38)}], [\mathcal{D}_{xx}^{(c-34)}]$ \hspace{2.4cm}
  $[\mathcal{D}_x^{(c-36)}], [\mathcal{D}_x^{(c-14)}], [\mathcal{D}_{xx}^{(c-10)}]$ \\
  \includegraphics[width=.48\textwidth]{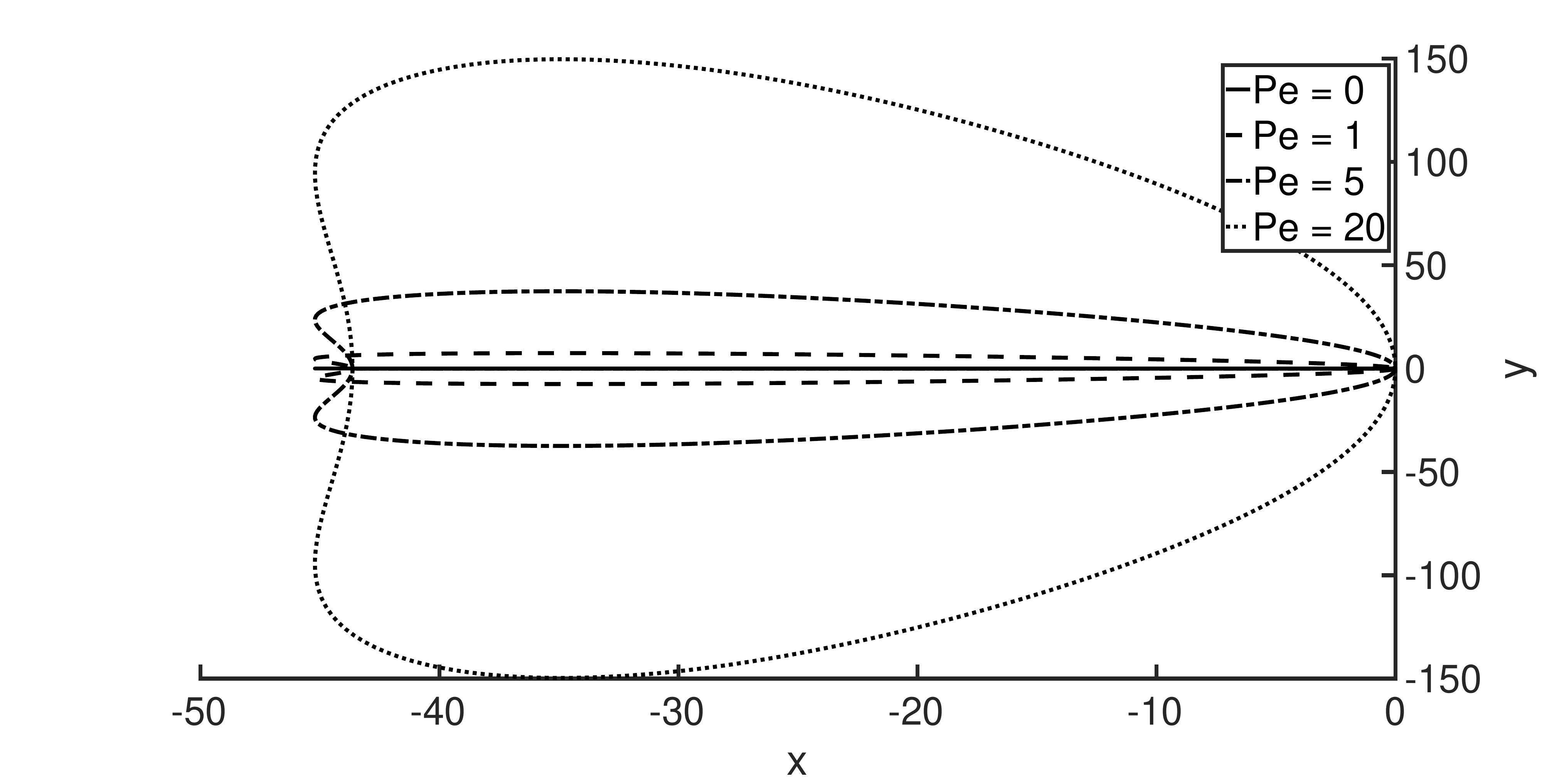} ~~
  \includegraphics[width=.48\textwidth]{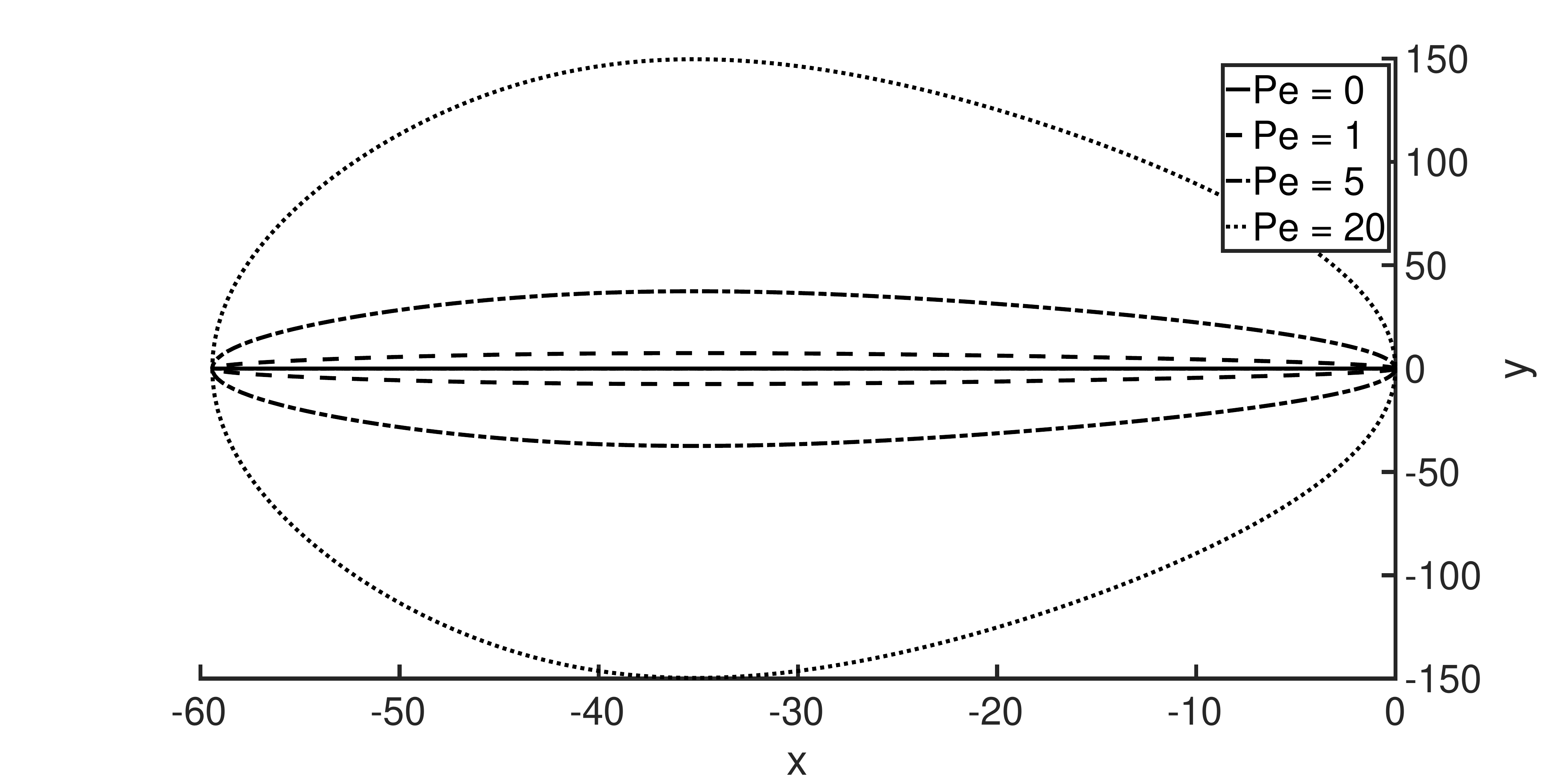} \\
  \hspace{.4cm} $[\mathcal{D}_x^{(c-10)}], [\mathcal{D}_x^{(c-40)}], [\mathcal{D}_{xx}^{(c-10)}]$ \hspace{2.4cm}
  $[\mathcal{D}_x^{(c-10)}], [\mathcal{D}_x^{(c-14)}], [\mathcal{D}_{xx}^{(c-36)}]$ 
  \caption{Trajectories $\mathcal{S}(\Pe)$ corresponding to selected unbalanced central HV schemes for $\Pe=0, 1, 5, 20$.
    Each central HV method is designated by the three operators in the order $[\mathcal{D}_x]$, $[\mathcal{D}_x^c]$, $[\mathcal{D}_{xx}^c]$.}
  \label{fg:num_cent_stab_unbal}
\end{figure}

\section{Conclusions}
\label{sec:concl}
We developed an accuracy and stability theory for a class of Hermite-type discretizations of one-dimensional linear advection-diffusion equations.
These methods evolve both nodal values and cell averages simultaneously and construct the required nodal spatial derivatives using a linear combination of both types of variables in a continuous stencil; hence they are also called the hybrid-variable (HV) methods and approximations to spatial derivatives are called hybrid-variable discrete differential operators (HV-DDO).

The accuracy analysis shows that the formal spatial order of accuracy of individual HV-DDO does not translate directly into the accuracy of the method itself.
In particular, if the local truncation error of the HV-DDO for the advection term is $P_1$, that of the central HV-DDO for the advective flux is $P_2$, and the central HV-DDO for the diffusion term has order $P_3$, then the spatial order of the advection-diffusion discretization is $\min(P_1+2,P_2,P_3+2)$.
This result identifies a two-order supraconvergence effect of two of the operators, extending the supraconvergence theory of HV discretizations for linear advection equations in previous work.
We tested extensively the spatial orders of accuracy of various combinations of HV-DDOs, and the numerical convergence rates confirm the theoretical predictions.

Because the classification of general stable upwind-biased HV discretizations of advection equations remains an open problem, in this work we focused on the stability theory of central HV discretizations of both linear diffusion equations and linear advection-diffusion equations.
The main theorem states that all central HV discretizations are stable at the semi-discretized level, and the proof uses extensively the theory of positive trigonometric polynomials and integral representation of these series.
The stability of a large collection of HV methods is verified by plotting the eigenvalue trajectories of the associated system of linear ordinary differential equations in the complex plane, and observing that they stay in the left complex plane.

\bibliographystyle{plain}      % mathematics and physical sciences
\bibliography{pap}

% Non-BibTeX users please use
%   \begin{thebibliography}{}
%   %
%   % and use \bibitem to create references. Consult the Instructions
%   % for authors for reference list style.
%   %
%   \bibitem{ref.J}
%   % Format for Journal Reference
%   Author, Article title, Journal, Volume, page numbers (year)
%   % Format for books
%   \bibitem{ref.B}
%   Author, Book title, page numbers. Publisher, place (year)
%   \end{thebibliography}

\appendix
\crefalias{section}{appendix}

\section{Verification of various sequences in the proof of~\cref{thm:stab_asym}}
\label{app:seq}
In this appendix, we verify various conditions for the sequences involved in the proof of Theorem~\ref{thm:stab_asym}; these conditions are designated by~\cref{thm:stab_viet} and~\cref{thm:stab_dec} to ensure the positivity of various trigonometric series.
The rest of the appendix is organized according to the order in which these sequences appear in the proof.

\smallskip

{\bf Part 1}. 
The first special Vietories sequence to verify is $\{\mathcal{a}_0-\mathcal{a}_1,\;\mathcal{a}_1-\mathcal{a}_2,\;\cdots,\;\mathcal{a}_{q-1}-\mathcal{a}_q=\mathcal{a}_{q-1}\}$, for which we use (\ref{eq:prelim_ddo_dxx_c_a_pos}) and distinguish between $q'=q$ and $q'=q-1$:
\begin{enumerate}
  \item Suppose $q'=q$, then using the fact that $\zeta_k^{q,q},\zeta_k^{q',q'}>0$ when $k>0$, we obtain:
    \begin{equation}\label{eq:app_viet_asym_a_bal}
      \mathcal{a}_{k-1}-\mathcal{a}_k = \frac{6(2+k(\zeta_k^{q,q}+\zeta_k^{q',q'}))}{k^3}C_k^{q,q}C_k^{q',q'}>0\;,\quad 1\le k\le q\;.
    \end{equation}
    To show $\mathcal{a}_{k-1}-\mathcal{a}_k$ is decreasing in $k$, we use the estimate:
    \begin{displaymath}
      \frac{2k}{q-k+1} > \zeta_k^{q,q} = \frac{1}{q-k+1}+\frac{1}{q-k+2}+\cdots+\frac{1}{q+k} > \frac{2k}{q+k+1}
    \end{displaymath}
    and a similar one for $\zeta_k^{q',q'}$; then for $2\le k\le q-1$:
    \begin{align}
      \notag
       &\ \frac{\mathcal{a}_{k-2}-\mathcal{a}_{k-1}}{\mathcal{a}_{k-1}-\mathcal{a}_k} 
      = \frac{2+(k-1)(\zeta_{k-1}^{q,q}+\zeta_{k-1}^{q',q'})}{2+k(\zeta_k^{q,q}+\zeta_k^{q',q'})}\cdot\frac{k^3}{(k-1)^3}\cdot\frac{(q+k)(q'+k)}{(q-k+1)(q'-k+1)} \\
      \notag
      >&\ \frac{2+(k-1)\left(\frac{2(k-1)}{q+k-1+1}+\frac{2(k-1)}{q'+k-1+1}\right)}{2+k\left(\frac{2k}{q-k+1}+\frac{2k}{q'-k+1}\right)}\cdot\frac{k^3}{(k-1)^3}\cdot\frac{(q+k)(q'+k)}{(q-k+1)(q'-k+1)} \\
      \label{eq:app_viet_asym_a_ratio}
      =&\ \frac{2k^3(q+k)(q'+k)+2k^3(k-1)^2(q+q'+2k)}{2(k-1)^3(q-k+1)(q'-k+1)+2k^2(k-1)^3(q+q'-2k+2)} \ge \frac{k}{k-1}\;.
    \end{align}
    Hence $\{\mathcal{a}_0-\mathcal{a}_1,\;\mathcal{a}_1-\mathcal{a}_2,\;\cdots,\;\mathcal{a}_{q-1}-\mathcal{a}_q=\mathcal{a}_{q-1}\}$ is a special Vietoris sequence.
  \item Suppose $q'=q-1$, then $\mathcal{a}_{k-1}-\mathcal{a}_k$ is still given by (\ref{eq:app_viet_asym_a_bal}) for $1\le k\le q-1$, whereas the last term is:
    \begin{displaymath}
      \mathcal{a}_{q-1}-\mathcal{a}_q = \mathcal{a}_{q-1} = \frac{6}{q^3}C_q^{q,q}C_q^{q',q}.
    \end{displaymath}
    Noticing in the previous case the inequality (\ref{eq:app_viet_asym_a_ratio}) does not use the assumption $q'=q$, it remains true in the present situation.
    Hence we just need to establish the inequality for the last ratio as follows:
    \begin{align*}
      \frac{\mathcal{a}_{q-2}-\mathcal{a}_{q-1}}{\mathcal{a}_{q-1}-\mathcal{a}_q} 
      =&\ \frac{2+(q-1)(\zeta_{q-1}^{q,q}+\zeta_{q-1}^{q-1,q-1})}{1}\cdot\frac{q^3}{(q-1)^3}\cdot\frac{C_{q-1}^{q,q}C_{q-1}^{q-1,q-1}}{C_q^{q,q}C_q^{q-1,q}} \\
      \ge&\ \frac{2q^3}{(q-1)^3}\cdot\frac{C_{q-1}^{q,q}C_{q-1}^{q-1,q-1}}{C_q^{q,q}C_q^{q-1,q}} = \frac{2q^3}{(q-1)^3}\cdot\frac{2(2q-1)}{1} > \frac{q}{q-1} > 1\;.
    \end{align*}
\end{enumerate}

The second sequence to be considered is $\{-\beta_k^c\,:\;1\le k\le p'\}$ and by using (\ref{eq:prelim_ddo_dx_c_beta_nz}) we first check each number is positive:
\begin{displaymath}
  -\beta_k^c = \frac{2}{k}C_k^{p,p}C_k^{p',p'} = \frac{2}{k}\frac{p!p!}{(p+k)!(p-k)!}\frac{p'!p'!}{(p'+k)!(p'-k)!} > 0\;,\quad 1\le k\le p'\;.
\end{displaymath}
Then we compute for $2\le k\le p'$:
\begin{displaymath}
  \frac{-\beta_{k-1}^c}{-\beta_k^c} = \frac{k}{k-1}\cdot\frac{(p+k)(p'+k)}{(p-k+1)(p'-k+1)} > \frac{k}{k-1}\;;
\end{displaymath}
thus the sequence is a special Vietoris one.

\smallskip

{\bf Part 2}. 
We want to check $\{-\mathcal{b}_0,\;-2\mathcal{b}_1,\;\cdots,\;-2\mathcal{b}_{q'}\}$ is a special Vietoris sequence.
Using~(\ref{eq:prelim_ddo_dxx_c_b_z}-\ref{eq:prelim_ddo_dxx_c_b_nz}), we have:
\begin{displaymath}
  -\mathcal{b}_0=\sum_{k=1}^q\frac{6}{k^2}+\sum_{k=1}^{q'}\frac{6}{k^2}>0\;;\quad
  -2\mathcal{b}_k=\frac{12}{k^2}C_k^{q,q}C_k^{q',q'}>0\;,\ \ 1\le k\le q'\;.
\end{displaymath}
The ratios between consecutive numbers are:
\begin{displaymath}
  \frac{-\mathcal{b}_0}{-2\mathcal{b}_1} = \frac{\sum_{k=1}^q(6/k^2)+\sum_{k=1}^{q'}(6/k^2)}{12C_1^{q,q}C_1^{q',q'}}
  = \frac{\sum_{k=1}^q(1/k^2)+\sum_{k=1}^{q'}(1/k^2)}{2\frac{qq'}{(q+1)(q'+1)}} \ge 
    \frac{2}{2\frac{qq'}{(q+1)(q'+1)}} > 1\;;
\end{displaymath}
and for $2\le k\le q'$:
\begin{displaymath}
  \frac{-2\mathcal{b}_{k-1}}{-2\mathcal{b}_k} = \frac{k^2}{(k-1)^2}\cdot\frac{(q+k)(q'+k)}{(q-k+1)(q'-k+1)} > \frac{k}{k-1}\;,
\end{displaymath}
which meets all criteria for a special Vietoris sequence. 

\smallskip

{\bf Part 3}.
For this part we begin with~\cref{eq:stab_asym_eigval_dxx_seq}, which has been used to prove~\cref{eq:stab_asym_eigval_dxx}.
Particularly, we first show that $c_0>0$ and $c_k<0$, $1\le k\le q$, and then prove $\sum_{k=0}^q(2k+1)c_k = 0$.
For simplicity, we present the details when $q'=q$; and the proof for the case $q'=q-1$ follows the same procedure.
Recall that:
\begin{displaymath}
  c_k=\left\{\begin{array}{lcl}
    -\mathcal{a}_0+\mathcal{a}_1-2\mathcal{b}_0+2\mathcal{b}_1\;, & & k=0\;, \\
    -\mathcal{a}_{k-1}+\mathcal{a}_{k+1}-2\mathcal{b}_k+2\mathcal{b}_{k+1}\;, & & k\ge1\;.
  \end{array}\right.
\end{displaymath}

First, we show $c_0>0$ and $c_k<0$ for all $k\ge1$.
By~\cref{thm:prelim_ddo_dxx}, there is:
\begin{displaymath}
  c_0 = -6(2+2\zeta^{q,q}_1)\left[C_1^{q,q}\right]^2 + 12H_{q,2} - 6\left[C_1^{q,q}\right]
  = 12H_{q,2}-\frac{6q(q+2)(3q+1)}{(q+1)^3}\;,
\end{displaymath}
where $H_{m,2}=\sum_{k=1}^m(1/k^2)$ is the generalized Harmonic number of order $2$.
It is trivial to see $c_0>12H_{q,2}-18$ thus for $q\ge7$, $c_0\ge12H_{7,2}-18>12\times1.511-18>0$; and one can easily verify $c_0>0$ for $1\le q\le 6$; hence $c_0>0$ for all $q$.

For $1\le k\le q-1$, one computes:
\begin{align*}
  c_k &= - \frac{6(2+2k\zeta_k^{q,q})}{k^3}\left[C_k^{q,q}\right]^2 - \frac{6(2+2(k+1)\zeta_{k+1}^{q,q})}{(k+1)^3}\left[C_{k+1}^{q,q}\right]^2 + \frac{12}{k^2}\left[C_k^{q,q}\right]^2 - \frac{12}{(k+1)^2}\left[C_{k+1}^{q,q}\right]^2 \\
  &= 12\left[C_k^{q,q}\right]^2\left\{- \frac{1+k\zeta_k^{q,q}}{k^3} - \frac{1+(k+1)\zeta_{k+1}^{q,q}}{(k+1)^3}\frac{(q-k)^2}{(q+k+1)^2} + \frac{1}{k^2} - \frac{1}{(k+1)^2}\frac{(q-k)^2}{(q+k+1)^2}\right\}\;.
\end{align*}
Using $\zeta_k^{q,q} = \frac{1}{q-k+1}+\cdots+\frac{1}{q+k} > \frac{2k}{q+1/2}$ and likewise $\zeta_{k+1}^{q,q} > \frac{2(k+1)}{q+1/2}$, and writing $q=z+k$:
\begin{align*}
  \frac{c_k}{12\left[C_k^{q,q}\right]^2} < - \frac{1+\frac{2k^2}{z+k+1/2}}{k^3} - \frac{1+\frac{2(k+1)^2}{z+k+1/2}}{(k+1)^3}\frac{z^2}{(z+2k+1)^2} + \frac{1}{k^2} - \frac{1}{(k+1)^2}\frac{z^2}{(z+2k+1)^2} \\
  = -\frac{(2k+1)\left[2z^3+(4k^2+10k+5)z^2+4(k+1)^4z+(k+1)^3(2k+1)(2k^2+k+1)\right]}{k^3(k+1)^3(2z+2k+1)(z+2k+1)^2} < 0\;.
\end{align*}
And lastly:
\begin{align*}
  c_q = -\mathcal{a}_{q-1} - 2\mathcal{b}_q = -\frac{12\left(1+q\left(H_{2q}-1\right)\right)}{q^3}\left[C_q^{q,q}\right]^2 < 0\;.
\end{align*}

\begin{remark}\label{rm:app_seq_h2}
  A more refined analysis on $c_0$ shows that for $m>50$, $H_{m,2}>\frac{13}{8}$, therefore:
  \begin{equation}\label{eq:app_seq_h2_low}
    2H_{m,2} - \frac{m(m+2)(3m+1)}{(m+1)^3} > \frac{13}{4} - 3 = \frac{1}{4}\;,
  \end{equation}
  and one can directly verify~\cref{eq:app_seq_h2_low} for $1\le m\le 50$.
  Hence it is actually true that $c_0>\frac{3}{2}$; and this improved lowerbound will be used latter in the proof of~\cref{eq:stab_asym_eigval_dxc_equiv}.
\end{remark}

Next, we shall show that $c_0 + 3c_1 + 5c_2 + \cdots+ (2q+1)c_q = 0$.
Writing $\mathcal{a}_k=\sum_{j=k+1}^qu_j, 0\le k\le q$, where $u_j=\frac{12(1+k\zeta^{q,q}_k)}{k^3}\left[C_k^{q,q}\right]^2$, one has:
\begin{align*}
  \sum_{k=0}^q(2k+1)c_k &= -u_1-2\mathcal{b}_0+2\mathcal{b}_1+\sum_{k=1}^q(2k+1)(-u_k-u_{k+1}-2\mathcal{b}_k+2\mathcal{b}_{k+1}) \\
  &= -4\sum_{k=1}^qku_k-2b_0-4\sum_{k=1}^q\mathcal{b}_k \\
  &= -4\sum_{k=1}^q\frac{12(1+k\zeta^{q,q}_k)}{k^2}\left[C^{q,q}_k\right]^2 + 24H_{q,2} + 4\sum_{k=1}^q\frac{6}{k^2}\left[C^{q,q}_k\right]^2 \\
  &= 24H_{q,2}-\sum_{k=1}^q\frac{24(1+2k\zeta^{q,q}_k)}{k^2}\left[C^{q,q}_k\right]^2\;,
\end{align*}
which is zero by the identity~\cref{eq:app_seq_identity}$_2$ that we will prove soon.

\medskip

Moving on to prove~\cref{eq:stab_asym_eigval_dxc_seq}; we will show first the sequence $\{d_k\}$ is positive and decreasing and then the first moment of the sequence equals $1$: $\sum_{k=1}^pkd_k = 1$.
Again, we assume $p'=p$ and the other case ($p'=p-1$) follows the same procedure.

Recall that:
\begin{displaymath}
  d_k = \alpha_{k-1}^c+\alpha_k^c+2\beta_k^c\;,
\end{displaymath}
one can compute for $1\le k\le p-1$:
\begin{align*}
  & d_k-d_{k+1} = \alpha_{k-1}^c-\alpha_{k+1}^c+2\beta_k^c-2\beta_{k+1}^c \\
  %=& \frac{2(1+2k\zeta^{p,p}_k)}{k^2}\left[C^{p,p}_k\right]^2 + \frac{2(1+2(k+1)\zeta^{p,p}_{k+1})}{(k+1)^2}\left[C^{p,p}_{k+1}\right]^2-\frac{4}{k}\left[C^{p,p}_k\right]^2 + \frac{4}{k+1}\left[C^{p,p}_{k+1}\right]^2 \\
  =& \left[C^{p,p}_k\right]^2\left\{\frac{2(1+2k(\zeta_k^{p,p}-1))}{k^2}+\frac{2\left[1+2(k+1)(\zeta_{k+1}^{p,p}+1)\right]}{(k+1)^2}\frac{(p-k)^2}{(p+k+1)^2}\right\} \\
  >&\frac{2\left[C_k^{p,p}\right]^2\left[
    2z^3 + (4k^2+10k+5)z^2 + 4(k+1)^2(2k+1)z + (k+1)^2(2k+1)^2(4k^2+1) 
    \right]}{k^2(k+1)^2(2z+2k+1)(z+2k+1)^2} > 0\;,
\end{align*}
where we defined $z=p-k>0$ and used again the estimates $\zeta_k^{p,p}\ge\frac{2k}{p+1/2}$ and $\zeta_{k+1}^{p,p}>\frac{2(k+1)}{p+1/2}$.
Therefore:
\begin{displaymath}
  d_1>d_2>\cdots>d_p=\alpha_{p-1}^c+2\beta_p^c = \frac{2\left[1+2p\left(H_{2p}-1\right)\right]}{p^2}\left[C_p^{p,p}\right]^2 > 0\;.
\end{displaymath}

Next, we show $\sum_{k=1}^pkd_k=1$. 
To this end, let us write $\alpha_k^c = \sum_{j=k+1}^pt_j$, where $t_j=\frac{2\left(1+2j\zeta_j^{p,p}\right)}{j^2}\left[C_j^{p,p}\right]^2$, and compute:
\begin{align}
  \notag
  & \sum_{k=1}^pkd_k = \sum_{k=1}^pk\left(\alpha_{k-1}^c+\alpha_k^c+2\beta_k^c\right)
  = \sum_{k=1}^pk\left(\sum_{j=k}^pt_j+\sum_{j=k+1}^pt_j+2\beta_k^c\right) \\
  \label{eq:app_seq_dxc_mom}
  =& \sum_{j=1}^p\left(\sum_{k=1}^jk+\sum_{k=1}^{j-1}k\right)t_j+\sum_{k=1}^p2k\beta_k^c
  = \sum_{k=1}^p\left(k^2t_k+2k\beta_k^c\right)
  = \sum_{k=1}^p\left(4k\zeta_k^{p,p}-2\right)\left[C_k^{p,p}\right]^2\;.
\end{align}
To compute the series sum in the right hand side of~\cref{eq:app_seq_dxc_mom}, we compute the power series expansion of $R(x) = \prod_{k=1}^p\left(1-\frac{x^2}{k^2}\right)^{-2}$ in two different ways.
On the one hand, using $\left(1-\frac{x^2}{k^2}\right)^{-1} = 1 + \frac{x^2}{k^2} + O(x^4)$ one gets:
\begin{equation}\label{eq:app_seq_dxc_taylor_1}
  R(x) = 1 + 2H_{p,2}x^2 + O(x^4)\;.
\end{equation}
On the other hand, we seek the partial fraction expansion:
\begin{align*}
  R(x) &= \sum_{k=1}^p\left[\frac{R_{k,2}}{(x-k)^2}+\frac{R_{k,2}}{(x+k)^2}+\frac{R_{k,1}}{x-k}-\frac{R_{k,1}}{x+k}\right]\;,
\end{align*}
and it is easy to compute:
\begin{align*}
  R_{k,2} &= \lim_{x\to k}\,(x-k)^2R(x) = \frac{k^2}{4}\prod_{j=1,j\ne k}^p\frac{j^4}{(j+k)^2(j-k)^2} = k^2\left[C_k^{p,p}\right]^2\;, \\
  R_{k,1} &= \left.\frac{d}{dx}\left[(x-k)^2R(x)\right]\right|_{x=k} = 2k\left(1-k\zeta^{p,p}_k\right)\left[C_k^{p,p}\right]^2\;.
\end{align*}
Therefore one has:
\begin{align}
  \notag
  R(x) &= \sum_{k=1}^p\left(\frac{2R_{k,2}}{k^2}-\frac{2R_{k,1}}{k}\right) + x^2\sum_{k=1}^p\left(\frac{6R_{k,2}}{k^4}-\frac{2R_{k,1}}{k^3}\right) + O(x^4) \\
  \notag
  &= \sum_{k=1}^p\left(4k\zeta^{p,p}_k-2\right)\left[C_k^{p,p}\right]^2 + x^2\sum_{k=1}^p\frac{2\left(1+2k\zeta^{p,p}_k\right)}{k^2}\left[C_k^{p,p}\right]^2 + O(x^4) \\
  \label{eq:app_seq_dxc_taylor_2}
  &= \sum_{k=1}^p\left(4k\zeta^{p,p}_k-2\right)\left[C_k^{p,p}\right]^2 + x^2\sum_{k=1}^pt_k + O(x^4)\;.
\end{align}
Comparing~\cref{eq:app_seq_dxc_taylor_1} and~\cref{eq:app_seq_dxc_taylor_2}, one has:
\begin{equation}\label{eq:app_seq_identity}
  \sum_{k=1}^p\left(4k\zeta^{p,p}_k-2\right)\left[C_k^{p,p}\right]^2 = 1\;,\quad
  \sum_{k=1}^p\frac{2\left(1+2k\zeta^{p,p}_k\right)}{k^2}\left[C_k^{p,p}\right]^2 = 2H_{p,2}\;.
\end{equation}
The first is precisely $\sum_{k=1}^pkd_k=1$, whereas the lastter gives $\alpha_0^c = 2H_{p,2}$, which allows us to derive the last inequality of~\cref{eq:stab_asym_eigval_dxc_seq}:
\begin{align*}
  d_1 = 4H_{p,2}-2(1+2\zeta^{p,p}_1)\left[C_1^{p,p}\right]^2-4\left[C_1^{p,p}\right]^2
      = 4H_{p,2}-\frac{2p(p+2)(3p+1)}{(p+1)^3} > \frac{1}{2}
\end{align*}
by~\cref{rm:app_seq_h2}.

\section{Proof of lemmas in~\cref{sec:stab_ade}}
\label{app:ade}
In this appendix, we prove the lemmas that are used in the proof of~\cref{thm:stab_ade}, particularly~\cref{lm:stab_ade_dx} and~\cref{lm:stab_ade_dxx}, which give bounds on the ratios of characteristic polynomials associated with central HV discretizations $[\mathcal{D}_x^c]$ and $[\mathcal{D}_{xx}^c]$, respectively.

\medskip

First we consider~\cref{lm:stab_ade_dx} and recall that:
\begin{align*}
  f^c(\theta) &= 2\alpha^c_0+2\sum_{k=1}^p(\alpha^c_k-\alpha^c_{k-1})\cos k\theta \\
  h^c(\theta) &= -2\sum_{k=1}^{p'}\beta^c_k\sin k\theta\;;
\end{align*}
and we intend to show:
\begin{equation}\label{eq:app_ade_dx}
  f^c > \theta h^c\;,\quad
  \theta(\theta+h^c) > f^c\;,\quad
  \omega(\omega+h^c) < f^c\;,
\end{equation}
%with $\omega=\theta\left(1-\frac{\theta^2}{18\pi^2}\right)$.
with $\omega=\theta\left(1-\frac{\theta^2}{50}\right)$.
Let us assume $p'=p$ in the proof below and the case of $p'=p-1$ is parallel. 
Note that we switch the stencil from $r$ and $r'$ to $p$ and $p'$, in compaison with the lemma; the purpose is to make the notations consistent with those in the previous appendix.

By definition, when $p'=p$ the $\alpha$-coefficients and $\beta$-coefficients are:
\begin{align*}
  \alpha^c_k = \sum_{j=k+1}^p\frac{2(1+2j\zeta^{p,p}_j)}{j^2}\left[C^{p,p}_j\right]^2\;,\quad 0\le k\le p\;;\qquad
  \beta^c_k = -\frac{2}{k}\left[C^{p,p}_k\right]^2\;,\quad 1\le k\le p\;.
\end{align*}
The plan is to study the remainder function:
\begin{equation}\label{eq:app_ade_dx_rem}
  R(\theta) = \theta(\theta+h^c(\theta))-f^c(\theta)\;,
\end{equation}
which satisfies:
\begin{displaymath}
  R(0) = -f^c(0) = -\alpha^c_p = 0\;,\quad
  R'(\theta) = 2\theta+h^c+\theta (h^c)' - (f^c)'\;.
\end{displaymath}
In fact, we have the following integral representation of $R(\theta)$ for $0<\theta\le\pi$:
\begin{lemma}\label{lm:app_ade_dx_rem}
  The function $R(\theta)$ given by~\cref{eq:app_ade_dx_rem} can be written:
  \begin{equation}\label{eq:app_ade_dx_rem_int}
    R(\theta) = 2\left[\frac{2^{2p}p!p!}{(2p)!}\right]^2\int_0^{\theta}\left[\int_0^{\theta'}\sin^{2p}\frac{t}{2}\;\sin^{2p}\frac{\theta'-t}{2}\,dt\right]\,d\theta'
  \end{equation}
\end{lemma}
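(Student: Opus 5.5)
Since $R(0)=0$, and the right-hand side of \cref{eq:app_ade_dx_rem_int} vanishes at $\theta=0$ and has derivative equal to the inner integral, I plan to prove $R'(\theta)=2K^2\int_0^\theta\sin^{2p}\frac{t}{2}\sin^{2p}\frac{\theta-t}{2}\,dt$ with $K=\frac{2^{2p}p!p!}{(2p)!}$. Because $R'(0)=h^c(0)-(f^c)'(0)=0$ (both series are odd at $0$), it is even cleaner to work with $R''$. Differentiating the inner convolution gives
\[
R''(\theta)=2K^2\,\frac{d}{d\theta}\int_0^\theta\sin^{2p}\tfrac{t}{2}\sin^{2p}\tfrac{\theta-t}{2}\,dt ,
\]
but this is awkward. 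Instead I will expand both sides as trigonometric-plus-polynomial functions and match them term by term.

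\textbf{Step 1: expand the left side.} I will compute $R'(\theta)=2\theta+h^c+\theta (h^c)'-(f^c)'$ explicitly. With $h^c=-2\sum\beta^c_k\sin k\theta$ and $f^c=2\alpha^c_0+2\sum_k(\alpha^c_k-\alpha^c_{k-1})\cos k\theta$, $R'$ has the form $2\theta+\sum_k a_k\sin k\theta+\theta\sum_k b_k\cos k\theta$. The coefficients are $b_k=-2k\beta^c_k=4[C_k^{p,p}]^2$ and $a_k=-2\beta^c_k+2k(\alpha^c_k-\alpha^c_{k-1})=\frac{4}{k}[C_k^{p,p}]^2-\frac{4(1+2k\zeta_k^{p,p})}{k}[C_k^{p,p}]^2=-8\zeta_k^{p,p}[C_k^{p,p}]^2$, using the explicit coefficients from \cref{cor:prelim_ddo_dx_c}.

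\textbf{Step 2: expand the right side.} I will write $\sin^{2p}\frac{t}{2}=2^{-2p}\binom{2p}{p}\sum_{|m|\le p}\gamma_m e^{imt}$. Here $\gamma_m=(-1)^m\binom{2p}{p+m}/\binom{2p}{p}=(-1)^mC_m^{p,p}$, since $C_m^{p,p}=\frac{p!p!}{(p+m)!(p-m)!}$. Hence $K\sin^{2p}\frac{t}{2}=\sum_m(-1)^mC_m^{p,p}e^{imt}$. This normalization is exactly why the prefactor $K^2$ appears. The convolution $\int_0^\theta e^{imt}e^{in(\theta-t)}dt$ equals $\theta e^{im\theta}$ if $m=n$, and $\frac{e^{im\theta}-e^{in\theta}}{i(m-n)}$ otherwise. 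The diagonal part $m=n$ therefore gives $2\theta\sum_m[C_m^{p,p}]^2e^{im\theta}=2\theta+4\theta\sum_{k\ge1}[C_k^{p,p}]^2\cos k\theta$, which matches $2\theta+\theta\sum b_k\cos k\theta$ provided the constant term is normalized as $C_0=1$. The off-diagonal part gives, for each $m$, the coefficient of $e^{im\theta}$ as $2C_m\sum_{n\ne m}(-1)^{m+n}C_n\frac{2}{i(m-n)}$ (together with its symmetric counterpart). This reduces to evaluating $S_m=\sum_{n\ne m}\frac{(-1)^{n}C_n^{p,p}}{m-n}$.

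\textbf{Step 3, the main obstacle: the identity $S_m\propto\zeta_m^{p,p}C_m^{p,p}$.} To obtain $a_k=-8\zeta_k[C_k]^2$ I need $\sum_{n\ne m}\frac{(-1)^{m+n}C_n^{p,p}}{m-n}=-\zeta_m^{p,p}C_m^{p,p}$, up to the sign fixed by the bookkeeping. I would prove it by partial fractions. Consider
\[
Q(x)=\frac{(p!)^2}{\prod_{j=-p}^{p}(x-j)}\cdot(\text{const}),
\]
whose residues are $(-1)^{p-n}C_n^{p,p}/(2p)!$-type numbers. Then $\sum_{n\ne m}\frac{\mathrm{Res}_n}{m-n}$ equals minus the regular part of $Q$ at $x=m$, i.e. the derivative of $(x-m)Q(x)$ at $m$. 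That derivative is $-\mathrm{Res}_m\,\zeta_m^{p,p}$, by the same logarithmic-derivative computation as $l'_\nu(x_{j+\nu})=\zeta_\nu/h$ in \cref{thm:prelim_ddo_dxx}. Using this, the factor-of-two bookkeeping, and the symmetry $n\leftrightarrow-n$, the sine coefficients should collapse to $-8\zeta_k[C_k]^2$, and the cosine parts from the off-diagonal terms should cancel by antisymmetry. Finally, integrating $R'$ from $0$ with $R(0)=0$ gives \cref{eq:app_ade_dx_rem_int}. The case $p'=p-1$ is handled in parallel with the unbalanced product, which replaces one factor $\sin^{2p}$ by a mixed kernel.
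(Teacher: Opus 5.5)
Your proposal is correct, and it proves the lemma by a genuinely different route from the paper. Your Step~1 coefficients agree with the paper's formula $R'(\theta)=2\theta+4\theta\sum_k\mathscr{C}(k)\cos k\theta+4\sum_k\mathscr{C}'(k)\sin k\theta$, since $\mathscr{C}'(k)=-2\zeta_k^{p,p}[C_k^{p,p}]^2$. Your Step~3 identity also holds. The function $Q(x)=(p!)^2/\prod_{j=-p}^{p}(x-j)$ has residues $r_n=(-1)^{p-n}C_n^{p,p}$, and $\sum_{n\ne m}r_n/(m-n)=\frac{d}{dx}[(x-m)Q(x)]_{x=m}=-\zeta_m^{p,p}r_m$. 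Hence the off-diagonal coefficient of $e^{im\theta}$ is $4i\zeta_m^{p,p}[C_m^{p,p}]^2$. This is odd in $m$ and vanishes at $m=0$, so it gives exactly $-8\zeta_k^{p,p}[C_k^{p,p}]^2\sin k\theta$. One wording slip: drop ``minus'' before ``the regular part''. The sum \emph{is} the regular part of $Q$ at $m$, i.e.\ the derivative you then compute correctly.

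\textbf{Comparison with the paper's route.} The paper never expands $\sin^{2p}$. It interpolates $[C_k^{p,p}]^2$ by the entire function $\mathscr{C}(x)$, the square of a reciprocal beta function. It writes $\sqrt{\mathscr{C}}$ as the Fourier transform of $\cos^{2p}(\pi t)\chi_{[-1/2,1/2]}(t)$. It then periodizes $\hat{\mathscr{C}}$ and $t\hat{\mathscr{C}}(t)$ to evaluate $\sum_k\mathscr{C}(k)e^{ik\theta}$ and $\sum_k\mathscr{C}'(k)e^{ik\theta}$ on $0<\theta\le\pi$. The $\hat{\mathscr{C}}(\theta/(2\pi))$ pieces cancel, leaving only the translate $\hat{\mathscr{C}}(\theta/(2\pi)-1)$, which is the convolution.

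That route derives the kernel rather than verifying it, and it explains where the $\zeta$'s come from: $x$-differentiation is multiplication by $t$ on the transform side. It also extends naturally to $p'=p-1$ by using the product of the two reciprocal-beta factors for $p$ and $p-1$; the extra boundary term in \cref{cor:prelim_ddo_dx_c} is then just the derivative of that product at $\pm p$. Its cost is a few analytic facts (entireness, double zeros at integers $|k|>p$, $C^1$ periodizations) that the paper states without proof.

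Your argument is finite and purely algebraic. The resonant terms $m=n$ produce the $\theta$-linear part, and the non-resonant terms produce the sine series. The only nontrivial input is the Lagrange-derivative identity already used in the proof of \cref{thm:prelim_ddo_dxx}. It holds for every real $\theta$, with no tracking of which periodized translates are active. It also transfers unchanged to the formula for $R''$ in the proof of \cref{lm:stab_ade_dxx}.

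Its costs are twofold. First, the kernel must be guessed in advance. Second, for $p'=p-1$ the two factors of the kernel differ, so your $m\leftrightarrow n$ symmetrization is unavailable. You would then need the partial-fraction identity on two node sets plus separate handling of the modes $m=\pm p$. The paper, too, only asserts that this case is parallel.
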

\noindent
Note: All inequalities in~\cref{eq:app_ade_dx} can be derived by estimating the double integral properly.
\begin{proof}
  Let us define:
  \begin{equation}\label{eq:app_ade_dx_a}
    \mathscr{C}(x) = \left[\frac{[\Gamma(p+1)]^2}{\Gamma(p+1+x)\Gamma(p+1-x)}\right]^2\;,
  \end{equation}
  where $\Gamma(x)$ is the Gamma function.
  It is easy to verify that for $1\le k\le p$:
  \begin{displaymath}
    \mathscr{C}(k) = \left[\frac{\Gamma(p+1)}{\Gamma(p+1+k)}\frac{\Gamma(p+1)}{\Gamma(p+1-k)}\right]^2
    = \left[\frac{p!}{(p+k)!}\frac{p!}{(p-k)!}\right]^2 = \left[C^{p,p}_k\right]^2\;.
  \end{displaymath}
  The logarithmic derivative of $\mathscr{C}$ is:
  \begin{displaymath}
    \frac{\mathscr{C}'(x)}{\mathscr{C}(x)} = -2\gamma(p+1+x)+2\gamma(p+1-x)\;,
  \end{displaymath}
  where $\gamma(x) = \frac{\Gamma'(x)}{\Gamma(x)}$ is the digamma function; therefore:
  \begin{displaymath}
    \frac{\mathscr{C}'(k)}{\mathscr{C}(k)} = -2\gamma(p+1+k)+2\gamma(p+1-k) = -2H_{p+k}+2H_{p-k} = -2\zeta^{p,p}_k\;.
  \end{displaymath}
  We can thus rewrite:
  \begin{displaymath}
    f^c(\theta) %= 2\sum_{k=1}^p\frac{2(1+2k\zeta^{p,p}_k)}{k^2}\left[C_k^{p,p}\right]^2\left[1-\cos(k\theta)\right]
    = 2\sum_{k=1}^p\frac{2}{k^2}\left[\mathscr{C}(k)-k\mathscr{C}'(k)\right]\left[1-\cos k\theta\right]\;,\quad
    h^c(\theta) = \sum_{k=1}^p\frac{4}{k}\mathscr{C}(k)\sin k\theta\;,
  \end{displaymath}
  and compute:
  \begin{align*}
    R'(\theta) &= 2\theta + \sum_{k=1}^p\frac{4}{k}\mathscr{C}(k)\sin k\theta+\theta\sum_{k=1}^p4\mathscr{C}(k)\cos k\theta-\sum_{k=1}^p\frac{4}{k}\left[\mathscr{C}(k)-k\mathscr{C}'(k)\right]\sin k\theta \\
      &= 2\theta + 4\theta\sum_{k=1}^p\mathscr{C}(k)\cos k\theta+4\sum_{k=1}^p\mathscr{C}'(k)\sin k\theta\;.
  \end{align*}
  As $\mathscr{C}(0)=1$, $\mathscr{C}(x)$ is even, and $\mathscr{C}'(x)$ is odd, one immediately sees:
  \begin{equation}\label{eq:app_ade_dx_rem_der}
    R'(\theta) = 2\theta\sum_{k=-p}^p\mathscr{C}(k)e^{ik\theta}-2i\sum_{k=-p}^p\mathscr{C}'(k)e^{ik\theta}\;.
  \end{equation}
  To proceed, we notice that $\sqrt{\mathscr{C}(x)}$ is a reciprocal beta function~\cite[5.12.5]{FWJOlver:2010a}:
  \begin{align*}
    \mathscr{S}(x) &= \sqrt{\mathscr{C}(x)} = \frac{\left[\Gamma(p+1)\right]^2}{\Gamma(p+1+x)\Gamma(p+1-x)} = \frac{B(p+1,p+1)}{B(p+1+x,p+1-x)} \\
    % a=2p+1, b=2x
    &= \frac{(2p+1)2^{2p+1}B(p+1,p+1)}{\pi}\int_0^{\frac{\pi}{2}}\cos^{2p}t\cos(2xt)\,dt
     = s_p\int_{-\hf}^{\hf}\cos^{2p}(\pi t)\;e^{-2\pi ixt}\,dt\;,
  \end{align*}
  where $s_p=(2p+1)2^{2p}B(p+1,p+1)=\frac{2^{2p}p!p!}{(2p)!}$.
  Then $(s_p)^{-1}\mathscr{S}(x)$ is the Fourier transform of $\hat{\mathscr{S}}$, where $\hat{\mathscr{S}}(t)=\cos^{2p}(\pi t)\,\chi_{[-1/2,1/2]}(t)$ and $\chi$ is the indicator function, 
  Therefore $(s_p)^{-2}\mathscr{C}(x)$ is the Fourier transform of the convolution $\hat{\mathscr{C}}\eqdef\hat{\mathscr{S}}\ast\hat{\mathscr{S}}$:
  \begin{equation}\label{eq:app_ade_dx_fourier}
    \mathscr{C}(x) = s_p^2\int_{-1}^1\hat{\mathscr{C}}(t)e^{-2\pi ixt}\,dt\;.
  \end{equation}
  Because $\hat{\mathscr{C}}$ is supported on $[-1,1]$, the following $1$-period function
  \begin{displaymath}
    \tilde{\mathscr{C}}(t) = \sum_{k\in\mathbb{Z}}\hat{\mathscr{C}}(t+k)
  \end{displaymath}
  is well-defined.
  The complex Fourier coefficient for $k\in\mathbb{Z}$ of $\tilde{\mathscr{C}}$ is:
  \begin{align*}
    \int_{-\hf}^{\hf}\tilde{\mathscr{C}}(t')e^{-2\pi ikt'}dt' &= 
    \int_{-\hf}^0\left(\hat{\mathscr{C}}(t')\!+\!\hat{\mathscr{C}}(t'\!+\!1)\right)e^{-2\pi ikt'}dt' +
    \int_0^{\hf}\left(\hat{\mathscr{C}}(t')\!+\!\hat{\mathscr{C}}(t'\!-\!1)\right)e^{-2\pi ikt'}dt' \\
    &= \int_{-1}^1\hat{\mathscr{C}}(t')e^{-2\pi ikt'}dt' = \frac{\mathscr{C}(k)}{s_p^2}\;.
    %\sum_{k\in\mathbb{Z}} e^{2\pi i kt}\times\int_{-\hf}^{\hf}\tilde{\mathscr{C}}(t')e^{-2\pi ikt'}dt'
  \end{align*}
  Therefore:
  \begin{displaymath}
    \tilde{\mathscr{C}}(t) = \sum_{k\in\mathbb{Z}}\frac{\mathscr{C}(k)}{s_p^2}e^{2\pi ikt}
    = \sum_{k=-p}^p\frac{\mathscr{C}(k)}{s_p^2}e^{2\pi ikt}\;.
  \end{displaymath}
  Here we used a few facts that are straightforward to check: (1) $\tilde{\mathscr{C}}$ is continuously differentiable on $\mathbb{R}$, (2) $\mathscr{C}(x)$ is the square of a reciprocal beta function and thus entire in $x$, and (3) $\mathscr{C}(k)=0$ for all integer $k$ such that $\abs{k}>p$.
  Let $t=\theta/(2\pi)$, one gets for $0<\theta\le\pi$:
  \begin{equation}\label{eq:app_ade_dx_cos}
    \sum_{k=-p}^p\mathscr{C}(k)e^{ik\theta} = s_p^2\tilde{\mathscr{C}}\left(\frac{\theta}{2\pi}\right)
    = s_p^2\left[\hat{\mathscr{C}}\left(\frac{\theta}{2\pi}\right)+\hat{\mathscr{C}}\left(\frac{\theta}{2\pi}-1\right)\right]\;;
  \end{equation}
  this provides an integral representation of the first series sum of~\cref{eq:app_ade_dx_rem_der}.
  For the second series sum, we repeat the process before but begin with taking the derivative of~\cref{eq:app_ade_dx_fourier} with respect to $x$ and obtain:
  \begin{displaymath}
    \mathscr{C}'(x) = s_p^2\int_{-1}^1(-2\pi i t)\hat{\mathscr{C}}(t)e^{-2\pi ixt}\,dt\;.
  \end{displaymath}
  Define the $1$-periodization of the integrand:
  \begin{displaymath}
    \mathscr{M}(t) = \sum_{k\in\mathbb{Z}}(t+k)\hat{\mathscr{C}}(t+k)\;,
  \end{displaymath}
  which is well-defined and continuously differentiable on $\mathbb{R}$; thus it has the following complex Fourier series:
  \begin{align*}
    \mathscr{M}(t) = \sum_{k\in\mathbb{Z}}e^{2\pi ikt}\int_{-\hf}^{\hf}\mathscr{M}(t)e^{-2\pi ikt'}dt' 
    = \sum_{k\in\mathbb{Z}}\frac{\mathscr{C}'(k)}{-2\pi i s_p^2}e^{2\pi ikt}
    = \sum_{k=-p}^p\frac{\mathscr{C}'(k)}{-2\pi i s_p^2}e^{2\pi ikt}\;. 
  \end{align*}
  Here we used the fact that $\mathscr{C}'(x)$ is an entire function and $\mathscr{C}'(k)=0$ for all integers $\abs{k}>p$, thanks to the fact that it is the square of a reciprocal beta function.
  Substituting $t=\theta/(2\pi)$ gives for $0<\theta\le\pi$:
  \begin{displaymath}
    i\sum_{k=-p}^p\mathscr{C}'(k)e^{ik\theta} = 2\pi s_p^2\mathscr{M}\left(\frac{\theta}{2\pi}\right)
    = 2\pi s_p^2\left[\frac{\theta}{2\pi}\hat{\mathscr{C}}\left(\frac{\theta}{2\pi}\right)+\left(\frac{\theta}{2\pi}-1\right)\hat{\mathscr{C}}\left(\frac{\theta}{2\pi}-1\right)\right]
  \end{displaymath}
  At last we can write~\cref{eq:app_ade_dx_rem_der} for $0<\theta\le\pi$ as:
  \begin{align*}
    R'(\theta) &= 2\theta s_p^2\left[\hat{\mathscr{C}}\left(\frac{\theta}{2\pi}\right)+\hat{\mathscr{C}}\left(\frac{\theta}{2\pi}-1\right)\right] - 2s_p^2\left[\theta\hat{\mathscr{C}}\left(\frac{\theta}{2\pi}\right)+\left(\theta-2\pi\right)\hat{\mathscr{C}}\left(\frac{\theta}{2\pi}-1\right)\right] \\
    &= 4\pi s_p^2\hat{\mathscr{C}}\left(\frac{\theta}{2\pi}-1\right) 
    = 4\pi s_p^2\int_{-\hf}^{\frac{\theta}{2\pi}-\hf}\cos^{2p}(\pi t)\,\cos^{2p}\left(\frac{\theta}{2}-\pi t\right)\,dt \\
    &= 2s_p^2\int_0^\theta\sin^{2p}\frac{t}{2}\,\sin^{2p}\frac{\theta-t}{2}\,dt\;,
  \end{align*}
  and~\cref{eq:app_ade_dx_rem_int} follows directly.
\end{proof}

Let us continue to prove~\cref{eq:app_ade_dx}.
The second inequality comes immediately from $R(\theta)>0$:
\begin{displaymath}
  R(\theta)>0\quad\Rightarrow\quad
  \theta(\theta+h^c)>f^c\;.
\end{displaymath}
To show~\cref{eq:app_ade_dx}$_1$, we estimate the inner integrand of~\cref{eq:app_ade_dx_rem_int}:
\begin{displaymath}
  \frac{1}{\theta'}\int_0^{\theta'}\sin^{2p}\frac{t}{2}\;\sin^{2p}\frac{\theta'-t}{2}\,dt
  = \int_0^1\sin^{2p}\frac{\theta't}{2}\;\sin^{2p}\frac{\theta'(1-t)}{2}\,dt
\end{displaymath} 
and the right hand side is an decreasing function in $\theta'$ in $(0,\pi]$.
Therefore:
\begin{displaymath}
  \int_0^{\theta'}\sin^{2p}\frac{t}{2}\;\sin^{2p}\frac{\theta'-t}{2}\,dt \le \frac{\theta'}{\pi}\int_0^\pi\sin^{2p}\frac{t}{2}\;\cos^{2p}\frac{t}{2}\,dt
  = \theta'B\left(p+\hf,p+\hf\right) = \frac{\theta'}{2^{2p}s_p}\;.
\end{displaymath}
Substituting it into~\cref{eq:app_ade_dx_rem_int}, one has:
\begin{displaymath}
  R(\theta) < \frac{s_p}{2^{2p}}\theta^2 \le \frac{1}{2}\theta^2 < \theta^2\;,
\end{displaymath}
and consequently:
\begin{displaymath}
  \theta(\theta+h^c) - f^c < \theta^2\quad\Rightarrow\quad
  \theta h^c < f^c \quad\Leftrightarrow\quad
  \cref{eq:app_ade_dx}_1\;.
\end{displaymath}
Lastly to show~\cref{eq:app_ade_dx}$_3$, we compute:
\begin{align*}
  f^c-\omega(\omega+h^c) &= \theta(\theta+h^c)-R(\theta)-\omega(\omega+h^c) = \frac{\theta^3}{50}(\theta+\omega+h^c)-R(\theta) \\
  &> \frac{\theta^3}{50}\left(2\theta-\frac{\theta^3}{50}\right)-R(\theta)
   > \frac{\theta^4}{50}\left(2-\frac{\pi^2}{50}\right)-R(\theta) = \left(\frac{1}{25}-\frac{\pi^2}{50^2}\right)\theta^4 - R(\theta)\;;
\end{align*}
hence we need a sharper (than $\sim\theta^2$) estimate for $R(\theta)$.
Let us consider the inner integrand of~\cref{eq:app_ade_dx_rem_int} again:
\begin{align*}
  \int_0^{\theta'}\sin^{2p}\frac{t}{2}\;\sin^{2p}\frac{\theta'-t}{2}\,dt &< 
  \frac{1}{2^{4p}}\int_0^{\theta'}t^{2p}(\theta'-t)^{2p}\,dt = \frac{(\theta')^{4p+1}}{2^{4p}}\int_0^1t^{2p}(1-t)^{2p}\,dt \\
  &= \frac{(\theta')^{4p+1}}{2^{4p}}B(2p+1,2p+1) = \frac{(\theta')^{4p+1}}{2^{4p}}\frac{(2p)!(2p)!}{(4p+1)!}\;,
\end{align*}
therefore:
\begin{displaymath}
  R(\theta) < \frac{2s_p^2}{4p+2}\frac{(2p)!(2p)!}{2^{4p}(4p+1)!}\theta^{4p+2}
  = \frac{2(p!)^4}{(4p+2)!}\theta^{4p+2}\;.
\end{displaymath}
Using induction and $0<\theta\le\pi$ one can show $R(\theta) < \frac{2\pi^2}{6!}\theta^4$ for $p\ge1$, which completes the proof as it is easily verified $\frac{1}{25}-\frac{\pi^2}{50^2} > \frac{2\pi^2}{6!}$.

\medskip

Second we consider~\cref{lm:stab_ade_dxx}, where:
\begin{align*}
  c(\theta) &= 2\sum_{k=1}^q(\mathcal{a}_{k-1}-\mathcal{a}_k)\sin k\theta \\
  b(\theta) &= -\mathcal{b}_0-2\sum_{k=1}^{q'}\mathcal{b}_k\cos k\theta\;.
\end{align*}
Assuming again for simplicity $q'=q$, one has:
\begin{displaymath}
  c(\theta) = \sum_{k=1}^q\frac{24(1+k\zeta^{q,q}_k)}{k^3}\left[C^{q,q}_k\right]^2\sin k\theta\;,\quad
  b(\theta) = 12H_{q,2} + \sum_{k=1}^q\frac{12}{k^2}\left[C^{q,q}_k\right]^2\cos k\theta\;;
\end{displaymath}
and the purpose is to show that:
\begin{equation}\label{eq:app_ade_dxx}
  %c < \omega b = \theta\left(1-\frac{\theta^2}{18\pi^2}\right)b\;.
  c < \omega b = \theta\left(1-\frac{\theta^2}{50}\right)b\;.
\end{equation}
Abusing the notations slightly, we define the remainder function:
\begin{displaymath}
  R(\theta) = \theta b(\theta) - c(\theta)\;,
\end{displaymath}
then one computes:
\begin{displaymath}
  R(0) = -c(0) = 0\;,\quad
  R'(\theta) = b+\theta b'-c'\;,\quad
  R''(\theta) = 2b'+\theta b''-c''\;.
\end{displaymath}
Following~\cref{eq:acry_semi_fun_dxx_c} and~\cref{eq:acry_semi_approx_dxx_c}, we see:
\begin{displaymath}
  \mathcal{a}(\theta) + \mathcal{b}(\theta) = \frac{1}{(i\theta)^3}ic + \frac{1}{(i\theta)^2}(-b) = 1+O(\theta)\quad\Rightarrow\quad
  \theta b - c = \theta^3 + O(\theta^4)\;,
\end{displaymath}
thus $R'(0) = 0$.

With the same definition of $\mathscr{C}$ as before, but replacing $p$ by $q$:
\begin{displaymath}
  \mathscr{C}(x) = \left[\frac{[\Gamma(q+1)]^2}{\Gamma(q+1+x)\Gamma(q+1-x)}\right]^2\;,
\end{displaymath}
we immediately get:
\begin{displaymath}
  c(\theta) = \sum_{k=1}^q\frac{12}{k^3}(2\mathscr{C}(k)-k\mathscr{C}'(k))\sin k\theta\;,\quad
  b(\theta) = 12H_{q,2}+\sum_{k=1}^q\frac{12}{k^2}\mathscr{C}(k)\cos k\theta
\end{displaymath}
and:
\begin{align*}
  R''(\theta) &= -\sum_{k=1}^q\frac{24}{k}\mathscr{C}(k)\sin k\theta - \theta \sum_{k=1}^q12\mathscr{C}(k)\cos k\theta + \sum_{k=1}^q\frac{12}{k}(2\mathscr{C}(k)-k\mathscr{C}'(k))\sin k\theta \\
  &= -\theta\sum_{k=1}^q12\mathscr{C}(k)\cos k\theta - \sum_{k=1}^q12\mathscr{C}'(k)\sin k\theta \\
  &= 6\theta - 6s_q^2\int_0^\theta\sin^{2q}\frac{t}{2}\;\sin^{2q}\frac{\theta-t}{2}\,dt\;,
\end{align*}
where we reused the calculation done in the proof of~\cref{lm:app_ade_dx_rem}.
Using $R(0)=R'(0)=0$, we obtain the integral representation of $R(\theta)$:
\begin{displaymath}
  R(\theta) = \int_0^\theta\left\{\int_0^{\theta_1}\left[6\theta_2-6s_q^2\int_0^{\theta_2}\sin^{2q}\frac{t}{2}\;\sin^{2q}\frac{\theta_2-t}{2}\,dt\right]d\theta_2\right\}d\theta_1\;.
\end{displaymath}
Given $0<\theta\le\pi$, there is the estimate:
\begin{align*}
  &\quad\int_0^{\theta_2}\sin^{2q}\frac{t}{2}\;\sin^{2q}\frac{\theta_2-t}{2}\,dt < \frac{\theta_2^{4q+1}}{2^{4q}}\frac{(2q)!(2q)!}{(4q+1)!} \\
  \Rightarrow&\quad
  R''(\theta_2) > 6\theta_2-6s_q^2\frac{\theta_2^{4q+1}}{2^{4q}}\frac{(2q)!(2q)!}{(4q+1)!}
  = 6\theta_2 - \frac{6(q!)^4}{(4q+1)!}\theta_2^{4q+1} \\
  \Rightarrow&\quad
  R'(\theta_1) > 3\theta_1^2 - \frac{6(q!)^4}{(4q+2)!}\theta_1^{4q+2} \quad\Rightarrow\quad
  R(\theta) > \theta^3-\frac{6(q!)^4}{(4q+3)!}\theta^{4q+3}\;.
\end{align*}
It is easy to see that the latest lower bound is smallest when $q=1$; hence we proved for $0<\theta\le\pi$:
\begin{displaymath}
  R(\theta) > \left(1-\frac{6\pi^4}{7!}\right)\theta^3 = \left(1-\frac{\pi^4}{840}\right)\theta^3\;.
\end{displaymath}
Therefore:
\begin{displaymath}
  \omega b-c = \theta\left(1-\frac{\theta^2}{50}\right)b-c = R(\theta) - \frac{\theta^3}{50}b > \left(1-\frac{\pi^4}{840}-\frac{b}{50}\right)\theta^3\;.
\end{displaymath}
Since $C^{q,q}_k<1$ for all $q$ and $1\le k\le q$, $b(\theta)$ is uniformly bounded:
\begin{displaymath}
  b(\theta) < 12H_{q,2}+\sum_{k=1}^q\frac{12}{k^2} = 24H_{q,2} < 24\sum_{k=1}^\infty\frac{1}{k^2} = 4\pi^2\;.
\end{displaymath}
Hence:
\begin{displaymath}
  \omega b - c > \left(1-\frac{\pi^4}{840}-\frac{4\pi^2}{50}\right)\theta^3 > 0\;.
\end{displaymath}

%{\color{RoyalBlue}
%\section{Unfinished/unsuccessful attempts}
%\label{app:unuse}
%\input{app_unuse.tex}
%}

%\section{Verifying stability using symbolic calculation}
%\label{app:matlab}
%\input{app_matlab.tex}

\end{document}